\newcommand{\red}[1]{\textcolor{black}{#1}}
\newcommand*{\id}{{\normalfont\hbox{1\kern-0.4em1}}}
\newcommand{\hide}[1]{}
\newcommand*{\curl}{\operatorname{curl}}
\newcommand*{\grad}{\operatorname{grad}}
\newcommand*{\dive}{\operatorname{div}}
\DeclareMathAccent{\Circ}{\mathalpha}{operators}{"17}
\newcommand{\interior}[1]{\Circ{#1}}
\newcommand{\lspan}{\operatorname{span}}
\newcommand{\oi}[2]{\left]#1,#2 \right[}
\newcommand{\lci}[2]{\left[#1,#2 \right[}
\renewcommand{\hat}{\widehat}
\renewcommand{\tilde}{\widetilde}
\renewcommand*{\epsilon}{\varepsilon}
\renewcommand*{\rho}{\varrho}
\theoremstyle{plain}
\newtheorem{thm}{Theorem}[section]
  \theoremstyle{definition}
  \newtheorem{hypo}[thm]{Assumption}
  \theoremstyle{remark}
  \newtheorem{rem}[thm]{Remark}
 \theoremstyle{definition}
  \newtheorem{example}[thm]{Example}
  \theoremstyle{plain}
  \newtheorem{prop}[thm]{Proposition}
  \theoremstyle{plain}
  \newtheorem{lem}[thm]{Lemma}
  \theoremstyle{plain}
  \newtheorem*{lem*}{Lemma}
  \theoremstyle{plain}
  \newtheorem*{thm*}{Theorem}
  \theoremstyle{plain}
  \theoremstyle{remark}
\begin{document}
\title{On a Class of Degenerate Abstract Parabolic Problems and Applications to Some Eddy Current Models}   

 \author{Dirk Pauly, Rainer Picard, Sascha Trostorff, and Marcus Waurick}
\maketitle 
\begin{abstract}We present an abstract framework for parabolic type equations which possibly degenerate on certain spatial regions. The degeneracies are such that the equations under investigation may admit a type change ranging from parabolic to elliptic type problems. The approach is an adaptation of the concept of so-called evolutionary equations in Hilbert spaces and is eventually applied to a degenerate eddy current type model. The functional analytic setting requires \textcolor{black}{quite}  minimal assumptions on the boundary and interface regularity. The degenerate eddy current model is justified as a limit model of non-degenerate hyperbolic models of Maxwell's equations.\end{abstract}  
\textbf{Keywords} eddy current model, Maxwell's equations, evo-systems, evolutionary equations, Helmholtz decomposition, mixed type equations

\textbf{Classification} (MSC2010) 35Q61, 35M12, 35M32, 35K65, 35K90, 35L90

\section{\label{sec:A-Brief-Introduction}Introduction }

The dynamics of electromagnetic fields is described by Maxwell's equations,
which for classical materials take the form
\begin{align*}
\partial_{0}\epsilon\mathrm{E}+\sigma\mathrm{E}-\curl\mathrm{H} & =-\mathrm{J},\\
\partial_{0}\mu\mathrm{H}+\curl\mathrm{E} & =0,
\end{align*}
where $\partial_{0}$ denotes time-differentiation, $\mathrm{E}$
the electric field, $\mathrm{H}$ the magnetic field. The term $\mathrm{J}$
summarises external current densities exciting the field, $\epsilon$
and $\mu$ describe dielectricity and permeability of the medium,
$\sigma$ its conductivity. \textcolor{black}{Here, we consider Maxwell's equations subject to the electric boundary condition; that is, we ask the electric field to have a vanishing tangential component at the boundary of the underlying domain $\Omega \subseteq \mathbb{R}^3$ is regular enough to allow for a well-defined unit outward normal field $n\colon \partial\Omega \to \mathbb{R}^3$ the strong form of the mentioned boundary condition reads
\[
    E\times n =0 \text{ on }\partial\Omega.
\]
It is possible to generalise this condition in a similar way to homogeneous Dirichlet boundary conditions also to $\Omega$ lacking the regularity for a well-defined unit outward normal. This will be detailed later in the text. For the time being we shall use $\interior{\curl}$ to denote the $\curl$ operator with the additional constraint of the appropriate generalisation of vanishing tangential component at the boundary.
Consequently, the above mentioned Maxwell's equations subject to the boundary condition read
\[
\left(\partial_{0}\left(\begin{array}{cc}
\epsilon & 0\\
0 & \mu
\end{array}\right)+\left(\begin{array}{cc}
\sigma & 0\\
0 & 0
\end{array}\right)+\left(\begin{array}{cc}
0 & -\curl\\
\interior{\curl} & 0
\end{array}\right)\right)\left(\begin{array}{c}
\mathrm{E}\\
\mathrm{H}
\end{array}\right)=\left(\begin{array}{c}
-\mathrm{J}\\
0
\end{array}\right).
\]}
There is a suitable abstract framework, see \cite{Pi2009-1}, extended,
for example in \cite{PIC_2010:1889,PTW_Evo,Trost12,Trost18,Wau15,Wau16},
to incorporate dissipative, non-autonomous, and nonlinear systems.
If for example $\epsilon,\:\mu,\:\sigma$ are all selfadjoint, non-negative,
given e.g. by non-negative, real scalar $L_{\infty}$-multiplication-operators,
then this abstract framework yields – with well-chosen boundary conditions
– well-posedness of the problem, if we assume that $\mu$ and $\epsilon+\sigma$
are both strictly positive. This allows for a type change by having
$\epsilon=0$ in some regions (eddy current case) and $\epsilon$
strictly positive in others. This eddy current problem is well-understood
and well-justified, see \cite{MMA:MMA4515} or \cite[Section 5.3]{Wau16}.
The problem we want to investigate here goes, however, one step further.
We assume $\epsilon=0$ everywhere and $\sigma$ may still vanish
in some regions, as e.g. suggested in \cite{zbMATH06064782}.

In the case $\epsilon=0$, we eliminate $\mathrm{H}$ and obtain \textcolor{black}{
\begin{equation}
\partial_{0}\sigma\mathrm{E}+\curl\mu^{-1}\interior{\curl}\mathrm{E}=-\partial_{0}\mathrm{J}\label{eq:degen}
\end{equation}}
as a degenerate eddy current problem, which formally has parabolic
regions, where $\sigma$ is strictly positive, and elliptic regions,
where $\sigma$ vanishes. Note that this indeed represents a particularly
degenerate situation for if $\sigma$ vanishes in some regions, the
resulting problem still has a null-space, stemming from the infinite-dimensional
null-space of the $\curl$-operator. In the derivation to be carried
out below this is in fact the crucial observation.

In a sense the problems discussed in this manuscript can also be regarded
as the parabolic extension of the framework provided for elliptic
type problems presented in \cite{TW14}, where nonlinear differential
inclusions in divergence form have been discussed.

The extended abstract framework of \cite{PIC_2010:1889} still allows
us to incorporate the degenerate situation, where $\sigma$ is only
supported in a bounded subset $\Omega_{c}$ of the underlying open
set $\Omega$ (with positive distance to the boundary of $\Omega$). 

Although electromagnetic fields are generally accepted to be controlled
by Maxwell's equations, it is still well established with engineers,
see e.g. \cite{zbMATH04191042,Dirks1996}, to discard Maxwell's correction,
i.e., the displacement current term. It appears that the rigorous justification
of the above degenerate eddy current problem, where $\epsilon=0$
and $\sigma$ vanishes in some region, is still open or rather unattainable.

For a survey concerning the eddy current problem the reader may consult
\cite[Chapter 8]{zbMATH01266750} and for various variants \cite{RDS:RDS5861}.  \red{We shall furthermore refer to \cite{RFV2003,RPV2003,RV2008,RV2010} for the eddy current problem particularly considered in the time harmonic case. A convergence result relating the non-vanishing dielectricity case to the eddy current version of Maxwell's equations is also presented in \cite{RV2010}. We connect this convergence statement to the one derived in the concluding section of the paper at hand at the end of this manuscript, see Remark \ref{rem:Conv}. We refer to \cite{Rappaz} for a mathematical treatment of eddy current type problems and a selection of applications.}

\red{
For a treatment of the full time-dependent problem with nowhere vanishing $\sigma$, we refer to the recent paper \cite{francini2020existence}. This treatment prerequisites more assumptions on the smoothness of the boundary of the underlying domain (as well as on the magnetic permeability), which we wish to avoid here. 
}

More specifically, our investigation is inspired by a series of papers
by S. Nicaise et al., \cite{zbMATH06290947,zbMATH06418162,zbMATH06435646}. \red{Among other things the so-called $A$-$\varphi$ approach is addressed in these references. We shall comment on this approach, when we present the complete solution theory\footnote{\red{A (linear) solution theory (for a linear operator $B$) comprises not just a description of a class
of right-hand sides $f$ for which a solution $u$ of $Bu=f$ can be found, but also to
identify a complete linear space, in which the solution can be found. Furthermore, one needs to ensure that for every right-hand side $f$ produced by an element $u$ in the way that $Bu=f$, we actually can recover the original $u$ from this
right-hand side by applying the proposed solution procedure. Indeed,
here we consider providing a solution theory as establishing that
the operator is a continuous bijection between its domain and its
range as complete linear spaces (well-posedness).}} for the eddy current problem discussed here, see Remark \ref{rem:Aphi}.} 

We will employ the theory of evolutionary equations as laid out in
Section \ref{sec:A-Brief-Introduction}, see \cite{PDE_DeGruyter,PIC_2010:1889},
 to analyse the structure of the degenerate eddy current problem.
It will prove to be beneficial to embed the degenerate eddy current
problem into an abstract class of degenerate parabolic systems in
order to understand the mechanism of well-posedness more deeply. After
a brief introduction, Section \ref{sec:A-Brief-Introduction}, into
the theory of a problem class, which we will refer to as evolutionary
equations or evo-systems, we shall investigate the mentioned abstract
class of degenerate parabolic problems as a special case more closely
in Section \ref{sec:A-Class-of}.

The application to the degenerate eddy current problem is then given
in the concluding Section \ref{sec:Application-to-a}. In particular,
having reformulated and solved the degenerate eddy current type problem,
we shall address the validity of the equations one started out with.
It appears that this a posteriori justification of the original equation
has not been addressed in the literature as of yet. The application
to the eddy current type model is discussed further in the concluding
2 sections. There we present an alternative saddle-point formulation
for the problem at hand, which might be useful for numerical considerations.
In fact a similar strategy has led to an efficient numerical treatment
of Maxwell's equations (see \cite{Tas07}). Moreover, we shall justify
the degenerate eddy current model as a regular limit case of non-degenerate
problems. In the framework presented here, we are thus mathematically
justifying that the degenerate eddy current problem is indeed approachable
by regular problems so that the maybe-easier-to-solve degenerate parabolic
problem leads to an appropriate approximation of the full hyperbolic
Maxwell's equations.

\section{A Brief Introduction to Evo-Systems}

In this section we shall introduce the general abstract problem class
we like to use as the underlying structure of the derivations to come.

More precisely, we will discuss \emph{evolutionary equations}, \emph{evo-systems}
for short, in the following. These terms are chosen deliberately in
order to distinguish from classical (explicit) evolution equations,
which turn out to be just a special case of the class of evo-systems.
For convenience of the reader, we gather some necessary information
as follows.

The starting idea of the evo-system approach is to realise that the
time-differentiation can be established as a normal operator in a
real, weighted $L^{2}$-type Hilbert space $H_{\rho,0}(\mathbb{R};H)$
, $\rho\in\oi0\infty$ , see e.g. \cite{PDE_DeGruyter}, characterised
by
\[
H_{\rho,0}\left(\mathbb{R},H\right)=\left\{ f\in L^{2,\mathrm{loc}}\left(\mathbb{R},H\right)|\left|f\right|_{\rho,0,0}\::=\sqrt{\int_{\mathbb{R}}\left|f\left(t\right)\right|_{\textcolor{black}{H}}^{2}\exp\left(-2\rho t\right)\:dt}<\infty\right\} ,
\]
where $\left|\:\cdot\:\right|_{\textcolor{black}{H}}$ denotes the norm in the underlying
\emph{real} Hilbert space $H$. Our choice of a real Hilbert space
is no important constraint, it merely is an adjustment to account
for mostly real physical quantities. Note that every complex Hilbert
space is in fact a real Hilbert space if we restrict scalar multipliers
to $\mathbb{R}$ and take the real part of the inner product as the
real inner product.

The inner product $\left\langle \:\cdot\;|\:\cdot\;\right\rangle _{\rho,0,0}$
of $H_{\rho,0}\left(\mathbb{R},H\right)$ is given by 
\[
\left(\phi,\psi\right)\mapsto\int_{\mathbb{R}}\left\langle \phi\left(t\right)|\psi\left(t\right)\right\rangle _{\textcolor{black}{H}}\:\exp\left(-2\rho t\right)\:dt,
\]
where $\left\langle \:\cdot\;|\:\cdot\;\right\rangle _{\textcolor{black}{H}}$ denotes
the inner product of $H$. We define the time-derivative $\partial_{0,\rho}$
(or just $\partial_{0}$, if $\rho$ is clear from the context) to
be the distributional derivative with respect to the first variable
in $H_{\rho,0}(\mathbb{R},H)$ with maximal domain. We also put $H_{\rho,1}(\mathbb{R},H)\coloneqq D(\partial_{0})$
endowed with $\left\langle \partial_{0}\cdot\;|\partial_{0}\cdot\;\right\rangle _{\rho,0,0}$
as scalar product. This is a scalar product the induced norm of which
being equivalent to the graph norm of $D(\partial_{0})$. Indeed,
for this $\partial_{0}$ needs to be continuously invertible. This
property on the other hand follows from maximal accretivity of $\partial_{0}$.
In fact, a simple integration-by-parts procedure shows that
\[
\frac{1}{2}\overline{\left(\partial_{0}+\partial_{0}^{*}\right)}\eqqcolon\mathrm{sym}\text{\ensuremath{\left(\partial_{0}\right)}}\supseteq\frac{1}{2}\left(\partial_{0}+\partial_{0}^{*}\right)=\rho,
\]
where $\rho$ is a short-hand for the operator of multiplying by the
scalar value $\rho$. So $\partial_{0}$ is (real) strictly positive
definite (or accretive). This observation can be lifted to obtain
a solution theory for systems (evo-systems) of the form
\[
\left(\partial_{0}M\left(\partial_{0}^{-1}\right)+A\right)U=F,
\]
where here we focus on simple – so-called – `material law' operators
of the form
\[
M\left(\partial_{0}^{-1}\right)=M_{0}+\partial_{0}^{-1}M_{1},
\]
where $M_{k}$, $k\in\left\{ 0,1\right\} $, are certain continuous,
linear operators in $H$. The operator $A$ is densely defined and
closed in the Hilbert space $H$. All the operators $M_{0}$, $M_{1}$,
and $A$ are (canonically) lifted to the $H$-valued space $H_{\rho,0}(\mathbb{R};H)$
by being applied pointwise with maximal domain. Re-using the notation
for these lifted operators, we easily verify that $M_{0}$ and $M_{1}$
are still bounded linear operator in the extended space $H_{\rho,0}(\mathbb{R};H)$
even commuting with $\partial_{0},$ that is,
\[
M_{k}\partial_{0}\subseteq\partial_{0}M_{k}\quad(k\in\{0,1\}).
\]
$A$ acting in $H_{\rho,0}(\mathbb{R};H)$ will still be densely defined
and closed; the adjoint of the lifted $A$ is the lift of the adjoint
of $A$ having acted in $H$. Focusing on the simple material law
mentioned above, we want to solve evo-systems of the form
\begin{equation}
\left(\overline{\partial_{0}M_{0}+M_{1}+A}\right)U=F.\label{eq:evo-sys}
\end{equation}
By solving this evo-system, we mean to show that for all $F\in H_{\rho,0}(\mathbb{R};H)$
there exists a unique $U\in H_{\rho,0}(\mathbb{R};H)$ satisfying
\eqref{eq:evo-sys}. In other words, $\left(\overline{\partial_{0}M_{0}+M_{1}+A}\right)$
needs to be shown to be continuously invertible. 

Furthermore, in order to render \eqref{eq:evo-sys} `physically meaningful',
we shall show that \eqref{eq:evo-sys} also leads to a \emph{causal}
solution operator, which will be quantified in the next theorem and
roughly means that there is `no reaction' $U$, if there is `no action'
$F$. We shall furthermore refer to \cite{Wau15-1} and to \cite[Chapter 2]{Wau16}
for a more detailed account on causality.

The issue in the context of well-posedness of \eqref{eq:evo-sys},
that is, continuous invertibility of $\left(\overline{\partial_{0}M_{0}+M_{1}+A}\right)$
is, see e.g. \cite{PIC_2010:1889,Wau16}, to establish estimates of
the form
\begin{eqnarray}
\left\langle U|\left(\partial_{0}M_{0}+M_{1}+A\right)U\right\rangle _{\rho,0,0} & \geq & c_{0}\left\langle U|U\right\rangle _{\rho,0,0}\quad(U\in D(A)\cap D(\partial_{0})),\label{eq:pos-def00}
\end{eqnarray}
\begin{eqnarray}
\left\langle V|\left(\partial_{0}M_{0}+M_{1}+A\right)^{*}V\right\rangle _{\rho,0,0} & \geq & c_{0}\left\langle V|V\right\rangle _{\rho,0,0}\quad(V\in D(\left(\partial_{0}M_{0}+M_{1}+A\right)^{*}))\label{eq:posdef01}
\end{eqnarray}
for some $c_{0}>0$.

In the following we shall employ the convention to denote by $D(C),$
$R(C)$, $N(C)$ the domain, range and kernel of a linear operator
$C$.

We record the following variant of \cite[Theorem 2.3]{PIC_2010:1889}
or \cite[Theorem 3.4.6]{Wau16}. For this we briefly emphasise that
in contrast to earlier treatments of this theorem, we shall focus
on the real Hilbert space case, only. In this way the real-parts used
for the positive definiteness estimates in the mentioned theorems
can entirely be dispensed with.

\begin{thm}\label{-Solution-0} Let $M_{0},M_{1}\in L(H)$ with $M_{0}=M_{0}^{\ast}.$
Moreover, let $A:D\left(A\right)\subseteq H\to H$ be a closed, densely
defined linear operator such that 
\begin{equation}
\left\langle W|\left(\rho M_{0}+M_{1}+A\right)W\right\rangle _{\textcolor{black}{H}}\geq c_{0}\left\langle W|W\right\rangle _{\textcolor{black}{H}}\label{eq:posdefaa}
\end{equation}
\begin{equation}
\left\langle V|\left(\rho M_{0}+M_{1}^{\ast}+A^{*}\right)V\right\rangle _{\textcolor{black}{H}}\geq c_{0}\left\langle V|V\right\rangle _{\textcolor{black}{H}}\label{eq:posdefbb}
\end{equation}
for some $c_{0},\rho_{0}\in\oi0\infty$ and all $W\in D\left(A\right)$,
$V\in D\left(A^{*}\right)$ and $\rho\in\lci{\rho_{0}}\infty$. Then,
equation \eqref{eq:evo-sys} has for every $F\in H_{\rho,0}\left(\mathbb{R},H\right)$
a unique solution $U\in H_{\rho,0}\left(\mathbb{R},H\right)$. Moreover,
we have for the corresponding solution operator the estimate
\[
\left|\chi_{_{]-\infty,a]}}\left(\overline{\partial_{0}M_{0}+M_{1}+A}\right)^{-1}F\right|_{\rho,0,0}\leq\frac{1}{c_{0}}\;\left|\chi_{_{]-\infty,a]}}F\right|_{\rho,0,0}
\]
for all $a\in\mathbb{R}$ and $F\in H_{\rho,0}\left(\mathbb{R},H\right)$,
that is, we have continuous and causal dependence on the data.\end{thm}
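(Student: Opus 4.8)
The plan is to reduce the statement to the two positive-definiteness estimates \eqref{eq:pos-def00} and \eqref{eq:posdef01} in the time-space $H_{\rho,0}(\mathbb{R};H)$, which themselves have to be derived from the pointwise hypotheses \eqref{eq:posdefaa} and \eqref{eq:posdefbb}. Once \eqref{eq:pos-def00} holds, the Cauchy--Schwarz inequality gives $|\overline{\partial_{0}M_{0}+M_{1}+A}\,U|_{\rho,0,0}\geq c_{0}|U|_{\rho,0,0}$, so that $\overline{\partial_{0}M_{0}+M_{1}+A}$ is injective with closed range; \eqref{eq:posdef01} likewise forces the adjoint $(\overline{\partial_{0}M_{0}+M_{1}+A})^{\ast}$ to be injective, hence the range is dense, and $\overline{\partial_{0}M_{0}+M_{1}+A}$ is continuously invertible with $\|(\overline{\partial_{0}M_{0}+M_{1}+A})^{-1}\|\leq 1/c_{0}$. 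The stronger causal estimate will then follow by repeating the energy estimate against a time cut-off. Since everything takes place in a real Hilbert space, all inner products occurring are real, so no real parts are needed anywhere; this is the only simplification compared with \cite[Theorem 2.3]{PIC_2010:1889} and \cite[Theorem 3.4.6]{Wau16}. I also note a free consequence of \eqref{eq:posdefaa}: dividing by $\rho$, keeping $W\in D(A)$ fixed and letting $\rho\to\infty$ gives $\langle W|M_{0}W\rangle_{0}\geq 0$, hence $M_{0}\geq 0$; this will be needed for a boundary term in the causal estimate.

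To establish \eqref{eq:pos-def00} I would take $U\in D(\partial_{0})\cap D(A)$. An integration by parts applied to the function $t\mapsto\langle M_{0}U(t)|U(t)\rangle_{0}\exp(-2\rho t)$ --- using only $M_{0}=M_{0}^{\ast}$, the commutation of $M_{0}$ with $\partial_{0}$, and the decay of $H_{\rho,1}$-functions --- yields $\langle U|\partial_{0}M_{0}U\rangle_{\rho,0,0}=\rho\langle M_{0}U|U\rangle_{\rho,0,0}$, whence
\[
\langle U|(\partial_{0}M_{0}+M_{1}+A)U\rangle_{\rho,0,0}=\int_{\mathbb{R}}\langle U(t)|(\rho M_{0}+M_{1}+A)U(t)\rangle_{0}\exp(-2\rho t)\,dt\geq c_{0}|U|_{\rho,0,0}^{2},
\]
where \eqref{eq:posdefaa} is applied pointwise, which is legitimate because $U(t)\in D(A)$ for almost every $t$ and $\rho\geq\rho_{0}$; the estimate passes to the closure of $\partial_{0}M_{0}+M_{1}+A$ in the usual way. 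For \eqref{eq:posdef01} one argues symmetrically, but first has to identify the adjoint as $(\overline{\partial_{0}M_{0}+M_{1}+A})^{\ast}=\overline{\partial_{0}^{\ast}M_{0}+M_{1}^{\ast}+A^{\ast}}$ (with $\partial_{0}^{\ast}=2\rho-\partial_{0}$): the inclusion ``$\supseteq$'' is the same integration by parts together with $M_{0}D(\partial_{0})\subseteq D(\partial_{0})$, while the reverse inclusion is cleanest after a Fourier--Laplace transformation, under which $\partial_{0}M_{0}+M_{1}+A$ becomes a direct integral of the fibre operators $(\ii t+\rho)M_{0}+M_{1}+A$ and adjoints are computed fibrewise. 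Granting this, $\langle V|\partial_{0}^{\ast}M_{0}V\rangle_{\rho,0,0}=\rho\langle M_{0}V|V\rangle_{\rho,0,0}$ and a pointwise use of \eqref{eq:posdefbb} give $\langle V|(\overline{\partial_{0}M_{0}+M_{1}+A})^{\ast}V\rangle_{\rho,0,0}\geq c_{0}|V|_{\rho,0,0}^{2}$ on the core $D(\partial_{0})\cap D(A^{\ast})$, hence everywhere.

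For the causal estimate I would fix $F\in H_{\rho,0}(\mathbb{R};H)$, put $U\coloneqq(\overline{\partial_{0}M_{0}+M_{1}+A})^{-1}F$, assume for the moment that $U\in D(\partial_{0})\cap D(A)$, and write $\chi\coloneqq\chi_{]-\infty,a]}$. Since $\chi$ acts only in the time variable, it commutes with the pointwise operators $M_{0},M_{1},A$, is an orthogonal projection, and satisfies $\langle\partial_{0}W|\chi W\rangle_{\rho,0,0}=\tfrac{1}{2}\exp(-2\rho a)|W(a)|_{0}^{2}+\rho|\chi W|_{\rho,0,0}^{2}$ for $W\in D(\partial_{0})$; inserting these into the computation above gives
\[
\langle\chi U|F\rangle_{\rho,0,0}=\tfrac{1}{2}\exp(-2\rho a)\langle M_{0}U(a)|U(a)\rangle_{0}+\langle\chi U|(\rho M_{0}+M_{1}+A)\chi U\rangle_{\rho,0,0}\geq c_{0}|\chi U|_{\rho,0,0}^{2},
\]
the boundary term being non-negative by $M_{0}\geq 0$ and the remaining term estimated by \eqref{eq:posdefaa} applied pointwise to $(\chi U)(t)\in D(A)$. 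Since $\langle\chi U|F\rangle_{\rho,0,0}=\langle\chi U|\chi F\rangle_{\rho,0,0}\leq|\chi U|_{\rho,0,0}|\chi F|_{\rho,0,0}$, this gives $|\chi U|_{\rho,0,0}\leq\tfrac{1}{c_{0}}|\chi F|_{\rho,0,0}$; approximating a general $F$ by elements of $(\overline{\partial_{0}M_{0}+M_{1}+A})[D(\partial_{0})\cap D(A)]$ (a core), and using continuity of the inverse and of multiplication by $\chi$, removes the provisional regularity assumption on $U$. The genuinely delicate step in this programme is the precise identification of the adjoint needed for \eqref{eq:posdef01}; the rest is bookkeeping with the integration-by-parts identity for $\partial_{0}M_{0}$ and with the pointwise hypotheses \eqref{eq:posdefaa} and \eqref{eq:posdefbb}.
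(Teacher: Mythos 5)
Your proposal is correct and follows essentially the same route as the paper: the weighted integration-by-parts identity for $\partial_{0}M_{0}$, pointwise application of \eqref{eq:posdefaa} and \eqref{eq:posdefbb} to get \eqref{eq:pos-def00} and \eqref{eq:posdef01}, the adjoint identification $(\partial_{0}M_{0}+M_{1}+A)^{\ast}=\overline{\partial_{0}^{\ast}M_{0}+M_{1}^{\ast}+A^{\ast}}$, and the causal estimate via the cut-off $\chi_{]-\infty,a]}$ with the non-negative boundary term discarded. Your explicit remark that $M_{0}\geq0$ follows from \eqref{eq:posdefaa} by letting $\rho\to\infty$ is a point the paper uses only implicitly when dropping that boundary term, but otherwise the arguments coincide.
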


\begin{proof}The result largely follows with the general results
in \cite{PIC_2010:1889} and is a special case of \cite[Theorem 3.4.6]{Wau16}
or of \cite[Theorem 3.1, Theorem 4.4]{Trost18}. Since, however, the
material law is more elementary here, we outline – for sake of transparency
and to remain self-contained – a more straightforward independent
proof. By density of $D(A)$ in $H$, we obtain that $D(A)$-valued
continuously differentiable functions with compact support are dense
in $H_{\rho,0}(\mathbb{R};H)$.

Thus, letting $U\in\mathring{C_{1}}(\mathbb{R};D(A))$ and using the Cauchy–Schwarz
inequality as well as integration by parts, we obtain
\begin{align}
 & \left|\chi_{_{]-\infty,a]}}U\right|_{\rho,0,0}\;\left|\chi_{_{]-\infty,a]}}\left(\partial_{0}M_{0}+M_{1}+A\right)U\right|_{\rho,0,0}\nonumber \\
 & \geq\left\langle \chi_{_{]-\infty,a]}}U|\left(\partial_{0}M_{0}+M_{1}+A\right)U\right\rangle _{\rho,0,0}\nonumber \\
 & =\int_{-\infty}^{a}\left\langle U|\left(\partial_{0}M_{0}+M_{1}+A\right)U\right\rangle _{\textcolor{black}{H}}\left(t\right)\;\exp\left(-2\rho t\right)\:dt\nonumber \\
 & =\int_{-\infty}^{a}\frac{1}{2}\left\langle U|M_{0}U\right\rangle _{\textcolor{black}{H}}^{\prime}\left(t\right)\:\exp\left(-2\rho t\right)\:dt+\int_{-\infty}^{a}\left\langle U|M_{1}U\right\rangle _{\textcolor{black}{H}}\left(t\right)\;\exp\left(-2\rho t\right)\:dt\nonumber \\
 & \quad\quad+\int_{-\infty}^{a}\left\langle U|AU\right\rangle _{\textcolor{black}{H}}\left(t\right)\;\exp\left(-2\rho t\right)\:dt\nonumber \\
 & =\frac{1}{2}\left\langle U|M_{0}U\right\rangle _{\textcolor{black}{H}}\left(a\right)\:\exp\left(-2\rho a\right)+\rho\int_{-\infty}^{a}\left\langle U|M_{0}U\right\rangle _{\textcolor{black}{H}}\left(t\right)\:\exp\left(-2\rho t\right)\:dt\label{eq:posdefxx}\\
 & \quad\quad+\int_{-\infty}^{a}\left\langle U|M_{1}U\right\rangle _{\textcolor{black}{H}}\left(t\right)\;\exp\left(-2\rho t\right)\:dt+\int_{-\infty}^{a}\left\langle U|AU\right\rangle _{\textcolor{black}{H}}\left(t\right)\;\exp\left(-2\rho t\right)\:dt\nonumber \\
 & \geq\rho\int_{-\infty}^{a}\left\langle U|M_{0}U\right\rangle _{\textcolor{black}{H}}\left(t\right)\:\exp\left(-2\rho t\right)\:dt+\int_{-\infty}^{a}\left\langle U|M_{1}U\right\rangle _{\textcolor{black}{H}}\left(t\right)\;\exp\left(-2\rho t\right)\:dt\nonumber \\
 & \quad\quad+\int_{-\infty}^{a}\left\langle U|AU\right\rangle _{\textcolor{black}{H}}\left(t\right)\;\exp\left(-2\rho t\right)\:dt\nonumber \\
 & =\int_{-\infty}^{a}\left\langle U|\left(\rho M_{0}+M_{1}+A\right)U\right\rangle _{\textcolor{black}{H}}\left(t\right)\:\exp\left(-2\rho t\right)\:dt\nonumber \\
 & \geq c_{0}\left\langle \chi_{_{]-\infty,a]}}U|\chi_{_{]-\infty,a]}}U\right\rangle _{\rho,0,0}.\nonumber 
\end{align}
Letting $a\to\infty$ in \eqref{eq:posdefxx} we get \eqref{eq:pos-def00}
with a density argument. Similarly, we obtain \eqref{eq:posdef01}
by re-doing the above estimate for $a=\infty$ and $A$ replaced by
$A^{*}$ (in which case there is no point-evaluation at the upper
time boundary value and we need to confirm that $\left(\partial_{0}M_{0}+M_{1}+A\right)^{*}=\overline{\partial_{0}^{*}M_{0}+M_{1}^{*}+A^{*}}$,
which in turn follows using suitable density arguments as for instance
in \cite[the proof of Theorem 2.13]{PTW_Evo}). Thus $\left(\overline{\partial_{0}M_{0}+M_{1}+A}\right)^{-1}$
is continuous. Hence, from $N\left(\left(\partial_{0}M_{0}+M_{1}+A\right)^{*}\right)=N\left(\overline{\partial_{0}^{*}M_{0}+M_{1}^{*}+A^{*}}\right)=\left\{ 0\right\} $,
we infer that $\left(\overline{\partial_{0}M_{0}+M_{1}+A}\right)^{-1}$
is also everywhere defined. Moreover, the above estimate \eqref{eq:posdefxx}
shows
\begin{equation}
\left|\chi_{_{]-\infty,a]}}\left(\overline{\partial_{0}M_{0}+M_{1}+A}\right)^{-1}F\right|_{\rho,0,0}\leq\frac{1}{c_{0}}\;\left|\chi_{_{]-\infty,a]}}F\right|_{\rho,0,0}\label{eq:causal-estimate}
\end{equation}
for all $a\in\mathbb{R}$ and $F\in H_{\rho,0}\left(\mathbb{R},H\right)$.
If $F=0$ on the time interval $]-\infty,a]$ then we read off that
also the solution $U$ must vanish on this time-interval, i.e. we
have causality. Letting $a\to\infty$ in \eqref{eq:causal-estimate}
shows continuous dependence in the form 
\[
\left\Vert \overline{\textcolor{black}{(}\partial_{0}M_{0}+M_{1}+A\textcolor{black}{)}}^{-1}\right\Vert \leq\frac{1}{c_{0}}.\tag*{{\qedhere}}
\]
\end{proof}

\begin{rem}\label{Dual-space}We identify the dual spaces
\begin{eqnarray*}
H & = & H^{\prime},\\
H_{\rho,0}\left(\mathbb{R}\right) & = & H_{\rho,0}\left(\mathbb{R}\right)^{\prime},
\end{eqnarray*}
and so we have
\begin{eqnarray*}
H_{\rho,0}\left(\mathbb{R},H\right) & = & H_{\rho,0}\left(\mathbb{R},H\right)^{\prime}.
\end{eqnarray*}
Moreover, the dual $\left(\partial_{0}^{*}\right)^{\diamond}$ of
the --- by choice of inner product --- unitary operator $\partial_{0}^{*}\iota_{H_{\rho,1}\left(\mathbb{R},H\right)}:H_{\rho,1}\left(\mathbb{R},H\right)\to H_{\rho,0}\left(\mathbb{R},H\right)$ --- has an extension to a continuous operator for which we keep the
notation $\partial_{0}$ and so 
\begin{eqnarray*}
\partial_{0}:H_{\rho,0}\left(\mathbb{R},H\right) & \to & H_{\rho,-1}\left(\mathbb{R},H\right):=H_{\rho,1}\left(\mathbb{R},H\right)^{\prime}.
\end{eqnarray*}
Similarly, the continuous mapping
\begin{eqnarray*}
A^{*}\iota_{H_{\rho,0}\left(\mathbb{R},D\left(A^{*}\right)\right)}:H_{\rho,0}\left(\mathbb{R},D\left(A^{*}\right)\right) & \to & H_{\rho,0}\left(\mathbb{R},H\right)
\end{eqnarray*}
has as dual
\begin{eqnarray*}
\left(A^{*}\right)^{\diamond}=\left(A^{*}\iota_{H_{\rho,0}\left(\mathbb{R},D\left(A^{*}\right)\right)}\right)^{\diamond}:H_{\rho,0}\left(\mathbb{R},H\right) & \to & H_{\rho,0}\left(\mathbb{R},D\left(A^{*}\right)^{\prime}\right),
\end{eqnarray*}
which may be considered as a continuous extension of $A$ and so justifies
(with some care) to keep $A$ as a notation for $\left(A^{*}\right)^{\diamond}$.
\footnote{Note that we routinely use $D\left(A\right)$ for the domain of $A$
also for the corresponding Hilbert space with respect to the graph
inner product of $A$. In this sense $D\left(A^{*}\right)^{\prime}$
denotes the dual Hilbert space of the Hilbert space $D\left(A^{*}\right)$.}

Indeed, for $\Psi\in H_{\rho,0}(\mathbb{R},D(A^{*}))$ we compute
\[
\left(\left(A^{*}\right)^{\diamond}\Phi\right)\left(\Psi\right)\coloneqq\left\langle \Psi|\left(A^{*}\right)^{\diamond}\Phi\right\rangle _{\rho,0,0}=\left\langle A^{*}\Psi|\Phi\right\rangle _{\rho,0,0}
\]
for all $\Phi\in H_{\rho,0}\left(\mathbb{R},D\left(A\right)\right)$,
in which case $A\Phi=\left(A^{*}\right)^{\diamond}\Phi$ and by continuous
extension also to $\Phi\in H_{\rho,0}\left(\mathbb{R},H\right)$.
We have for a solution of the evo-system \eqref{eq:evo-sys} that
\[
\partial_{0}M_{0}U+M_{1}U+AU=F
\]
holds in the space $H_{\rho,-1}\left(\mathbb{R},D\left(A^{*}\right)^{\prime}\right).$
Note that 
\[
H_{\rho,-1}\left(\mathbb{R},D\left(A^{*}\right)^{\prime}\right)\supseteq H_{\rho,-1}\left(\mathbb{R},H\right)\cap H_{\rho,0}\left(\mathbb{R},D\left(A^{*}\right)^{\prime}\right).
\]
We shall use this observation to conveniently drop the closure bar
in equations of the form \eqref{eq:evo-sys}.\end{rem}

\begin{rem}\label{local-in-time}\textcolor{black}{(a)} In the case of a simple material
law as used here it is interesting to note that the result easily
carries over to a local-in-time formulation. Indeed, the time-derivative
restricted to a finite time-interval $\left[0,T\right]$, $T\in\oi0\infty$,
given as the closure $\partial_{0,\rho,]0,T]}$ of $\partial_{0}$
restricted to $\interior C_{1}\left(]0,T],H\right)$ in $H_{\rho,0}\left(\oi0T,H\right)$
loses the skew-selfadjointness, keeps, however, the maximal accretivity.
We emphasise the parentheses of the interval in the index of the time-derivative
operator: $\partial_{0,\rho,]0,T]}$ has a zero boundary condition
at $0$, and no boundary condition at $T$; whereas $\partial_{0,\rho,[0,T[}$
(defined as $\partial_{0,\rho,]0,T]}$ with $]0,T]$ being interchanged
by $[0,T[$) has no boundary condition at $0$ and a zero boundary
condition at $T$. In classical terms, we have
\begin{eqnarray*}
D(\partial_{0,\rho,]0,T]}) & = & \{\phi\in H^{1}(0,T);\phi(0)=0\},\\
D(\partial_{0,\rho,[0,T[}) & = & \{\phi\in H^{1}(0,T);\phi(T)=0\}.
\end{eqnarray*}
For the closure $\partial_{0,\rho,[0,T[}$ we still have $\partial_{0,\rho,]0,T]}^{*}=-\partial_{0,\rho,[0,T[}+2\rho$.
Thus, it is rather straightforward to see 
\[
\partial_{0,\rho,]0,T]},\partial_{0,\rho,]0,T]}^{*}\geq\rho,
\]
which allows the solution theory of
\[
\partial_{0,\rho}M_{0}+M_{1}+A
\]
to be carried over to
\[
\partial_{0,\rho,]0,T]}M_{0}+M_{1}+A.
\]
In this sense the above solution strategy also carries over to problems
with finite time horizon. For this, we also refer to \cite{key-10}
for a numerical treatment of evo-systems. Regarding numerics, we shall
furthermore refer to the Section \ref{sec:An-Extended-System}.

\textcolor{black}{(b) It is also possible to use the above derived solution theory for incorporating initial value problems. For this there are at least two possibilities. One is to require that the initial datum $U_0$ is in the domain of $A$. Then one can show that for the unique solution $V$ of 
\[
   \overline{ (\partial_0 M_0 +M_1 +A)}V=-\chi_{[0,\infty)}M_1U_0-\chi_{[0,\infty)}AU_0,
\]it follows that $U=V+\chi_{[0,\infty)}U_0$ satisfies the initial value problem
\[
  \begin{cases}
     (\partial_0 M_0 +M_1 +A)U=0 & \text{ on }(0,\infty) \\
     (M_0U)(0+)=M_0U_0&
  \end{cases}
  \]
  in an appropriate sense. It is also possible to extend the solution operator $(\overline{\partial_0 M_0 +M_1 +A})^{-1}$ to a continuous linear operator $S$ from $H_{\rho,-1}(\mathbb{R};H)$ into itself. It can then be shown that the solution $U$ of the just introduced initial value problem satisfies
  \[
     U= S\delta_0M_0U_0,
  \]where $\delta_0$ is the Dirac delta-distribution. Interestingly, the latter formulation is also well-defined for $U_0\notin D(A)$ and, thus, serves as a generalisation for the initial value problem for less regular initial data; we refer to \cite[Chapter 6]{PDE_DeGruyter}, \cite[Lecture 9]{STW20} for the details.}
\end{rem}

Our focus in the following will be on a rather particular subclass,
where $M_{1}=0$ and $A=C^{*}C$ for a closed, densely defined operator
$C$ with closed range. The coefficient $M_{0}$ may have a non-trivial
null space but, as we shall see, that $0$ is in the resolvent set
of the reduction of $C^{*}C$ to the subspace $R\left(C^{*}\right)$,
which is also closed, can be used to compensate for this short-coming.
Recall that for elliptic problems, that is, for $M_{0}=0$, the strategy
of projecting onto $R(C^{*})$ has been successfully applied also
to non-linear (abstract) differential inclusions, see \cite{TW14}.
Also in \cite{TW14}, the crucial assumption for the well-posedness
was a closed range condition.

\section{\label{sec:A-Class-of}A Class of Degenerate Abstract Parabolic Equations }

In this whole section, we let $H$ and $X$ be Hilbert spaces and
let $\eta\in L(H)$ be a bounded, selfadjoint, non-negative operator.
Furthermore, let 
\[
C:D\left(C\right)\subseteq H\to X
\]
be closed and densely defined; throughout assume $C$ to have a closed
range.

Abstractly speaking, we want to consider 

\begin{equation}
\left(\overline{\partial_{0}\eta+C^{*}C}\right)U=F.\label{eq:basic-eq}
\end{equation}

\begin{rem}\label{NoteC}Note that the equation holds in the form
\[
\partial_{0}\eta U+C^{*}C\:U=F
\]
if considered in the space 
\[
H_{\rho,-1}\left(\mathbb{R},D\left(C^{*}C\right)^{\prime}\right).
\]
This is clear from Remark \ref{Dual-space}. Henceforth, we shall
therefore dispose of the closure bar in equations of the form \eqref{eq:basic-eq}
unless it is needed for sake of clarity. \end{rem}

Without having looked at this equation in detail, it is immediately
clear, where degeneracies might arise. Indeed, if $U$ attains non-zero
values in $N(\eta)\cap N(C)$, that is, if $U\in H_{\rho,0}\left(\mathbb{R},N\left(\eta\right)\cap N\left(C\right)\right)$
we have
\[
\partial_{0}\eta U+C^{*}CU=0,
\]
and so if $N\left(\eta\right)\cap N\left(C\right)$ is not trivial,
well-posedness for \eqref{eq:basic-eq} is out of reach. Hence, the
term `degenerate'. We shall come back to this issue in a moment's
time. Following the solution strategy for evo-systems as it has been
sketched in the previous section, we realise that the issue in the
context of well-posedness is to establish estimates of the form
\begin{eqnarray*}
\left\langle U|\left(\partial_{0}\eta+C^{*}C\right)U\right\rangle _{\rho,0,0} & = & \left\langle \eta^{1/2}U|\partial_{0}\eta^{1/2}U\right\rangle _{\rho,0,0}+\left\langle CU|CU\right\rangle _{\rho,0,0}\\
 &  & \geq c_{0}\left\langle U|U\right\rangle _{\rho,0,0},
\end{eqnarray*}
\begin{eqnarray*}
\left\langle U|\left(\partial_{0}\eta+C^{*}C\right)^{*}U\right\rangle _{\rho,0,0} & = & \left\langle \eta^{1/2}U|\partial_{0}^{*}\eta^{1/2}U\right\rangle _{\rho,0,0}+\left\langle CU|CU\right\rangle _{\rho,0,0}\\
 &  & \geq c_{0}\left\langle U|U\right\rangle _{\rho,0,0}.
\end{eqnarray*}

Since, due to the density of elements with compact time support in
$D\left(\partial_{0}\right)$,
\[
\left\langle \eta^{1/2}U|\partial_{0}^{*}\eta^{1/2}U\right\rangle _{\rho,0,0}=\left\langle \eta^{1/2}U|\partial_{0}\eta^{1/2}U\right\rangle _{\rho,0,0}=\rho\left|\eta^{1/2}U\right|_{\rho,0,0}^{2}
\]
we only need to consider one of the estimates, thus we need to have
\begin{equation}
\rho\left|\eta^{1/2}U\right|_{\rho,0,0}^{2}+\left|CU\right|_{\rho,0,0}^{2}\geq c_{0}\left|U\right|_{\rho,0,0}^{2},\label{eq:pddeg}
\end{equation}
which again emphasises that the Hilbert space we choose $U$ from
cannot contain the space $H_{\rho,0}\left(\mathbb{R},N\left(\eta\right)\cap N\left(C\right)\right)$.

It is the aim of this section to show that restricting our attention
to the orthogonal complement of $N\left(\eta\right)\cap N\left(C\right)$
as well as assuming an estimate of the type \eqref{eq:pddeg} for
$U$ attaining values in 
\[
H_{0}\coloneqq\left(N\left(\eta\right)\cap N\left(C\right)\right)^{\perp}\subseteq H
\]
leads to well-posedness and causality with state space $H_{0}$. Since
both $\eta$ and $C$ are operators acting on the `spatial' Hilbert
space, only, it is possible to provide an equivalent formulation,
which only uses the spatial scalar product.

\begin{prop}\label{prop:ptw} Let $C$ and $\eta$ be as above. Then
the following conditions are equivalent:\begin{enumerate}\item There
exists $\rho>0$ and $c_{0}>0$ such that for all $U\in H_{\rho,0}\left(\mathbb{R},H_{0}\cap D(C)\right)$
we have
\[
\rho\left|\eta^{1/2}U\right|_{\rho,0,0}^{2}+\left|CU\right|_{\rho,0,0}^{2}\geq c_{0}\left|U\right|_{\rho,0,0}^{2}.
\]

\item There exists $c_{0}>0$ such that for all $U\in H_{0}\cap D(C)$
we have
\[
\left|\eta^{1/2}U\right|_{H}^{2}+\left|CU\right|_{X}^{2}\geq c_{0}\left|U\right|_{H}^{2}.
\]

\end{enumerate}

\end{prop}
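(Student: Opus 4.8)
The plan is to exploit that $\eta$ and $C$ act on the spatial Hilbert space $H$ only, so that $H_{\rho,0}(\mathbb{R},H)$ ``tensorizes'' as $L^{2}_{\rho}(\mathbb{R})\otimes H$ and the two estimates are, up to bookkeeping of the weight $\rho$, the same inequality read either fibrewise in time or integrated in time.

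For the implication (2) $\Rightarrow$ (1) I would simply integrate the spatial estimate. Fix any $\rho\geq 1$ and keep the same $c_{0}$. Given $U\in H_{\rho,0}(\mathbb{R},H_{0}\cap D(C))$, for almost every $t$ the value $U(t)$ lies in $H_{0}\cap D(C)$, so (2) yields $|\eta^{1/2}U(t)|_{H}^{2}+|CU(t)|_{X}^{2}\geq c_{0}|U(t)|_{H}^{2}$ pointwise in $t$ (all three functions being measurable and, weighted by $\exp(-2\rho t)$, integrable, since $U$ is $D(C)$-valued in the graph-norm sense). Multiplying by $\exp(-2\rho t)$, integrating over $\mathbb{R}$, and using $\rho\geq 1$ to bound $|\eta^{1/2}U(t)|_{H}^{2}\leq\rho|\eta^{1/2}U(t)|_{H}^{2}$ on the left, gives precisely $\rho|\eta^{1/2}U|_{\rho,0,0}^{2}+|CU|_{\rho,0,0}^{2}\geq c_{0}|U|_{\rho,0,0}^{2}$, i.e. (1) — in fact for every $\rho\geq 1$ simultaneously.

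The direction (1) $\Rightarrow$ (2) carries the actual content, though it too is elementary: I would test (1) on rank-one (in time) functions. Let $\rho>0$ and $c_{0}>0$ be as in (1). Fix $u\in H_{0}\cap D(C)$ and a scalar $\phi\in H_{\rho,0}(\mathbb{R})$ with $\phi\neq 0$ (e.g. $\phi=\chi_{[0,1]}$), and set $U:=\phi(\cdot)\,u$. Then $U\in H_{\rho,0}(\mathbb{R},H_{0}\cap D(C))$ with $(\eta^{1/2}U)(t)=\phi(t)\,\eta^{1/2}u$ and $(CU)(t)=\phi(t)\,Cu$ (by linearity of $C$), so that
\[
|\eta^{1/2}U|_{\rho,0,0}^{2}=\|\phi\|^{2}\,|\eta^{1/2}u|_{H}^{2},\quad
|CU|_{\rho,0,0}^{2}=\|\phi\|^{2}\,|Cu|_{X}^{2},\quad
|U|_{\rho,0,0}^{2}=\|\phi\|^{2}\,|u|_{H}^{2},
\]
where $\|\phi\|^{2}:=\int_{\mathbb{R}}|\phi(t)|^{2}\exp(-2\rho t)\,dt\in\oi{0}{\infty}$. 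Substituting into the inequality of (1) and dividing by $\|\phi\|^{2}$ yields $\rho\,|\eta^{1/2}u|_{H}^{2}+|Cu|_{X}^{2}\geq c_{0}\,|u|_{H}^{2}$ for every $u\in H_{0}\cap D(C)$. Finally I would absorb the weight: with $\tilde{c}_{0}:=c_{0}/\max\{1,\rho\}>0$ one has
\[
|\eta^{1/2}u|_{H}^{2}+|Cu|_{X}^{2}
\;\geq\;\frac{1}{\max\{1,\rho\}}\bigl(\rho\,|\eta^{1/2}u|_{H}^{2}+|Cu|_{X}^{2}\bigr)
\;\geq\;\tilde{c}_{0}\,|u|_{H}^{2},
\]
which is (2).

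I do not anticipate a genuine obstacle: the statement is in essence a tensorization lemma built on the fact that $\eta$ and $C$ are purely spatial. The only points requiring a little care are the admissibility of the rank-one test functions $\phi(\cdot)u$ in $H_{\rho,0}(\mathbb{R},H_{0}\cap D(C))$ together with the clean factorisation of their weighted norms, and the bookkeeping of the constant when passing between the $\rho$-weighted estimate and the plain spatial one — handled above by the harmless factor $\max\{1,\rho\}$. It is worth recording explicitly, as noted in the second paragraph, that (2) in fact delivers (1) for all $\rho\geq 1$ at once, which is the form actually used in the well-posedness argument to follow.
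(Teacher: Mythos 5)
Your proposal is correct and follows essentially the same route as the paper: the trivial direction is obtained by integrating the spatial estimate in time, and the substantive direction by testing the time--space estimate on rank-one functions $\phi(\cdot)u$ supported in $[0,1]$ and then absorbing the weight via the factor $\max\{1,\rho\}$ (the paper uses $\phi(t)=\exp(\rho t)\chi_{[0,1]}(t)$ and divides by $\rho_{*}=\max\{\rho,1\}$ before testing, which is the same computation in a different order).
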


\begin{proof} An easy density argument implies that the second inequality
implies the first one with $\rho=1$ and the same $c_{0}>0$. Thus, it
remains to show the converse implication. For this, note that with
$\rho_{*}\coloneqq\max\{\rho,1\}$ we have for all $U\in H_{\rho,0}\left(\mathbb{R},H_{0}\cap D(C)\right)$
\[
\left|\eta^{1/2}U\right|_{\rho,0,0}^{2}+\left|CU\right|_{\rho,0,0}^{2}\geq\frac{c_{0}}{\rho_{*}}\left|U\right|_{\rho,0,0}^{2}.
\]
Let $x\in H_{0}\cap D(C)$. Using the latter inequality for $U(t)\coloneqq\exp(\rho t)x$
for $t\in[0,1]$ and $U(t)=0$ for $t<0$ and $t>1,$ we infer the
desired inequality.
\end{proof}

Next, note that, since elements in $N\left(\eta\right)\cap N\left(C\right)$
are orthogonal to $R\left(C^{*}\right)$ and $R\left(\eta\right)$
and if $C$ and consequently $C^{*}$ are operators with closed range
we may reduce the operator $C$ to $H_{0}\coloneqq\left(N\left(\eta\right)\cap N\left(C\right)\right)^{\perp}$.
Indeed, as we shall see next, the operators
\begin{eqnarray*}
C_{0}:D\left(C\right)\cap H_{0}\subseteq H_{0} & \to & X\\
u & \mapsto & Cu
\end{eqnarray*}
retains the closedness of the range and is also still densely defined.
With $\iota_{H_{0}\to H}$ denoting the canonical isometric embedding
of $H_{0}$ as a subspace of $H$, we have
\[
C_{0}=C\iota_{H_{0}\to H}.
\]
 The mentioned properties of $C_{0}$ are proved next.

\begin{lem}The operator $C_{0}$ is closed, densely defined and has
a closed range. \end{lem}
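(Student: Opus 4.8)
The plan is to reduce everything to one structural observation. Set $N \coloneqq N(\eta) \cap N(C)$, so that $H_0 = N^{\perp}$. Since $\eta$ is bounded (hence continuous) and $C$ is closed, both $N(\eta)$ and $N(C)$ are closed subspaces of $H$, so $N$ is closed and $H_0$ is a Hilbert space in its own right. The key point is that $N \subseteq N(C) \subseteq D(C)$. Writing $P \in L(H)$ for the orthogonal projection onto $H_0$, the complementary projection $\mathrm{id}_H - P$ maps $H$ onto $N$; therefore, for every $u \in D(C)$ we have $(\mathrm{id}_H - P)u \in N \subseteq D(C)$ and hence $Pu = u - (\mathrm{id}_H - P)u \in D(C)$. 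In other words, $P$ maps $D(C)$ into $D(C) \cap H_0 = D(C_0)$, and $C(\mathrm{id}_H - P)u = 0$ since $(\mathrm{id}_H-P)u \in N(C)$.

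Closedness of $C_0$ then follows directly from closedness of $C$: if $u_n \in D(C_0)$ with $u_n \to u$ in $H_0$ and $C_0 u_n = Cu_n \to v$ in $X$, then $u_n \to u$ in $H$, so $u \in D(C)$ and $Cu = v$ by closedness of $C$, while $u \in H_0$ because $H_0$ is closed; hence $u \in D(C_0)$ and $C_0 u = v$.

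For dense definedness, let $x \in H_0$ and, using density of $D(C)$ in $H$, pick $u_n \in D(C)$ with $u_n \to x$ in $H$. Then $Pu_n \in D(C_0)$ by the observation above, and $Pu_n \to Px = x$ by continuity of $P$ and $x \in H_0$. Thus $D(C_0)$ is dense in $H_0$.

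For the range, $R(C_0) \subseteq R(C)$ is trivial; conversely, for $u \in D(C)$ we have $Cu = C(Pu) + C(\mathrm{id}_H - P)u = C_0(Pu) \in R(C_0)$, so in fact $R(C_0) = R(C)$, which is closed by hypothesis. The whole argument is elementary once the inclusion $N \subseteq D(C)$ has been noticed; this is really the only thing to see, and it is what makes the orthogonal decomposition $H = N \oplus H_0$ compatible with the (a priori merely densely defined) domain of $C$. (The same observation, together with $N(\eta) = R(\eta)^{\perp}$ and $N(C) = R(C^{*})^{\perp}$, also identifies $H_0 = \overline{R(\eta) + R(C^{*})}$, though this is not needed here.)
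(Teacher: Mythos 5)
Your proof is correct and rests on exactly the same key observation as the paper's, namely that $N(\eta)\cap N(C)\subseteq N(C)\subseteq D(C)$, so the orthogonal projection onto $H_0$ maps $D(C)$ into $D(C_0)$ and kills nothing of $Cu$; the density and closedness arguments then coincide with the paper's, and your closed-range step (showing $R(C_0)=R(C)$ outright) is just a slightly more streamlined packaging of the paper's sequence argument, which likewise concludes via $w_\infty=CP_{H_0}x_*=C_0(P_{H_0}x_*)$. No changes needed.
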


\begin{proof}It is 
\[
H=H_{0}\oplus H_{0}^{\perp}
\]
and 
\[
H_{0}^{\perp}=N\left(\eta\right)\cap N\left(C\right)\subseteq N\left(C\right)\subseteq D\left(C\right)
\]
and so
\[
D\left(C\right)=\left(D\left(C\right)\cap H_{0}\right)\oplus H_{0}^{\perp}.
\]
The density of $D\left(C_{0}\right)=D\left(C\right)\cap H_{0}$ in
$H_{0}$ now follows from the continuity of the orthogonal projector
$P_{H_{0}}$ onto $H_{0}$. Indeed, let $x_{\infty}\in H_{0}.$ Then
we find a sequence $\left(x_{n}\right)_{n}$ in $D(C)$ such that
$x_{n}\to x_{\infty}.$ Thus, also $P_{H_{0}}x_{n}\to P_{H_{0}}x_{\infty}=x_{\infty}.$
Since, $(1-P_{H_{0}})x_{n}\in D(C)$ for all $n\in\mathbb{N}$ by
the argument above, we infer that $(P_{H_{0}}x_{n})_{n\in\mathbb{N}}$
is, in fact, a sequence in $D(C_{0})$ showing that $C_{0}$ is densely
defined. 

Since $C_{0}=C\cap(H_{0}\oplus X),$ where we identify the operators
with their graphs, the closedness of $C_{0}$ follows.

We are left with showing the closedness of the range of $C_{0}$.
For this, let $z$ be a sequence in $H_{0}$ such that $C_{0}z=Cz\to w_{\infty}$
for some $w_{\infty}\in X$. Then by the closedness of the range of
$C$ we have
\[
Cx_{*}=w_{\infty}
\]
for some $x_{*}\in D\left(C\right)$. Since 
\[
w_{\infty}=Cx_{*}=CP_{H_{0}}x_{*}=C_{0}\left(P_{H_{0}}x_{*}\right),
\]
we confirm that $w_{\infty}\in R(C_{0})$ finally proving that indeed
closedness of the range is preserved.\end{proof} 

\begin{lem} We have
\[
C_{0}^{*}=\overline{\iota_{H_{0}\to H}^{*}C^{*}}.
\]

\end{lem}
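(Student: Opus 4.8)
The plan is to establish the two operator inclusions $\overline{\iota_{H_0\to H}^{*}C^{*}}\subseteq C_0^{*}$ and $C_0^{*}\subseteq\iota_{H_0\to H}^{*}C^{*}$ separately, each time invoking only the elementary rule $(TS)^{*}\supseteq S^{*}T^{*}$, valid whenever $T$ is densely defined and $S$ is bounded and everywhere defined. Write $\iota\coloneqq\iota_{H_0\to H}$, so that $\iota$ is isometric, $\iota^{*}\iota=\identity$ on $H_0$, and $\iota\iota^{*}=P_{H_0}$, the orthogonal projector onto $H_0$ in $H$. The key point is that, besides the defining factorisation $C_0=C\iota$, there is a second one: $C=C_0\iota^{*}$, with equality of domains. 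This is the only place where the geometry of the problem (as prepared in the preceding lemma) enters, and it is what I expect to be the main — though still routine — obstacle.

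To prove $C=C_0\iota^{*}$, I would use the decomposition $D(C)=(D(C)\cap H_0)\oplus H_0^{\perp}$ together with $H_0^{\perp}=N(\eta)\cap N(C)\subseteq N(C)\subseteq D(C)$ from the previous lemma. Since for any $x\in H$ the complementary summand $(\identity-P_{H_0})x$ lies in $H_0^{\perp}\subseteq D(C)$, one has $P_{H_0}x\in D(C)$ if and only if $x\in D(C)$; hence $D(C_0\iota^{*})=\{x\in H:\iota^{*}x\in D(C_0)\}=\{x\in H:P_{H_0}x\in D(C)\}=D(C)$. Moreover, for $x\in D(C)$, $C_0\iota^{*}x=C(P_{H_0}x)=Cx-C\bigl((\identity-P_{H_0})x\bigr)=Cx$, because $(\identity-P_{H_0})x\in N(C)$. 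Thus $C=C_0\iota^{*}$ as operators.

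Now the two inclusions follow mechanically. From $C_0=C\iota$ and the product rule, $C_0^{*}=(C\iota)^{*}\supseteq\iota^{*}C^{*}$; since $C_0^{*}$ is closed this yields $\overline{\iota^{*}C^{*}}\subseteq C_0^{*}$. From $C=C_0\iota^{*}$ and the product rule, $C^{*}=(C_0\iota^{*})^{*}\supseteq(\iota^{*})^{*}C_0^{*}=\iota C_0^{*}$; composing on the left with the bounded operator $\iota^{*}$ and using $\iota^{*}\iota=\identity$ on $H_0$ gives $\iota^{*}C^{*}\supseteq\iota^{*}\iota C_0^{*}=C_0^{*}$. Combining, $C_0^{*}=\iota^{*}C^{*}$, and in particular $C_0^{*}=\overline{\iota^{*}C^{*}}$, as claimed. (In fact this shows $\iota^{*}C^{*}$ is already closed, so the closure bar is a harmless precaution; it records exactly what the weaker "inclusion" form of the product rule supplies if one does not wish to also exploit $C=C_0\iota^{*}$.)

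If one prefers to avoid the factorisation $C=C_0\iota^{*}$ entirely, the inclusion $C_0^{*}\subseteq\iota^{*}C^{*}$ can be obtained by a direct computation: for $y\in D(C_0^{*})$ with $z\coloneqq C_0^{*}y\in H_0$ and arbitrary $x\in D(C)$, split $x=P_{H_0}x+(\identity-P_{H_0})x$ and compute $\langle Cx\,|\,y\rangle_{X}=\langle C_0P_{H_0}x\,|\,y\rangle_{X}=\langle P_{H_0}x\,|\,z\rangle_{H}=\langle x\,|\,z\rangle_{H}$, using $Cx=CP_{H_0}x$ and $z\in H_0$. This exhibits $y\in D(C^{*})$ with $C^{*}y=z$, hence $\iota^{*}C^{*}y=z=C_0^{*}y$. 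Either way, the substance of the argument is the structural fact $H_0^{\perp}\subseteq N(C)$, with the rest being bookkeeping with adjoints of products.
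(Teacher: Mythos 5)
Your proof is correct, but it takes a genuinely different route from the paper: the paper disposes of the lemma in one line by citing a general result on reductions of ``mother operators'' (\cite[Theorem 1.2]{Mother2013}), whereas you give a self-contained elementary argument. Your key move is to complement the defining factorisation $C_{0}=C\iota$ with the reverse factorisation $C=C_{0}\iota^{*}$ (with equality of domains), which rests precisely on the structural fact $H_{0}^{\perp}=N(\eta)\cap N(C)\subseteq N(C)\subseteq D(C)$ established in the preceding lemma; the two applications of the rule $(TS)^{*}\supseteq S^{*}T^{*}$ then squeeze $C_{0}^{*}$ between $\overline{\iota^{*}C^{*}}$ and $\iota^{*}C^{*}$. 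What your approach buys is transparency and a slightly sharper conclusion: you show that $C_{0}^{*}=\iota_{H_{0}\to H}^{*}C^{*}$ holds \emph{without} the closure bar (equivalently, that $\iota_{H_{0}\to H}^{*}C^{*}$ is already closed, with $D(C_{0}^{*})=D(C^{*})$), which the citation-based formulation leaves open. What the paper's route buys is generality: the cited theorem covers reductions to arbitrary closed subspaces where the domains need not match and the closure is genuinely necessary, whereas your argument exploits the special geometry $H_{0}^{\perp}\subseteq N(C)$ of the present situation. Both your product-rule derivation and your alternative direct computation of $\langle Cx\,|\,y\rangle_{X}=\langle x\,|\,z\rangle_{H}$ are sound; no gaps.
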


\begin{proof} Since $C_{0}$ is densely defined we obtain the assertion
with \cite[Theorem 1.2]{Mother2013}.\end{proof}

Thus, we are led to study the reduced – by construction injective
– operator
\begin{align*}
\partial_{0}\eta_{0}+C_{0}^{*}C_{0} & =\iota_{H_{0}\to H}^{*}\left(\partial_{0}\eta+C^{*}C\right)\iota_{H_{0}\to H}
\end{align*}
with $\eta_{0}\coloneqq\iota_{H_{0}\to H}^{*}\eta\iota_{H_{0}\to H}$
now being selfadjoint in $H_{0}$. 

To proceed with our approach we need to assume moreover for some $c_{1}>0$
\begin{align}
 & \left|\eta_{0}^{1/2}U\right|_{H_{0}}^{2}+\left|C_{0}U\right|_{X}^{2}\geq c_{1}\left|U\right|_{H_{0}}^{2}\label{eq:assume}
\end{align}
for all $U\in D\left(C_{0}\right)$. 

\begin{rem} Note that \eqref{eq:assume} is equivalent to the inequalities
asserted in Proposition \ref{prop:ptw}. For this, we observe that
for all $U\in H_{0}\cap D(C)=D(C_{0})$ we have $C_{0}U=CU$. Moreover,
for $U\in H_{0}$ we compute
\begin{align*}
\left|\eta_{0}^{1/2}U\right|_{H_{0}}^{2} & =\langle\eta_{0}^{1/2}U|\eta_{0}^{1/2}U\rangle_{H_{0}}\\
 & =\langle U|\eta_{0}U\rangle_{H_{0}}\\
 & =\langle U|\iota_{H_{0}\to H}^{*}\eta\iota_{H_{0}\to H}U\rangle_{H_{0}}\\
 & =\langle\iota_{H_{0}\to H}U|\eta\iota_{H_{0}\to H}U\rangle_{H_{0}}\\
 & =\langle U|\eta U\rangle_{H}\\
 & =\left|\eta^{1/2}U\right|_{H}^{2},
\end{align*}
which yields the desired equivalence.\end{rem}

The latter assumption leads to well-posedness of the evo-system under
consideration in the state space $H_{0}$.

\begin{prop}Assume \eqref{eq:assume}. Then $\overline{\partial_{0}\eta_{0}+C_{0}^{*}C_{0}}$
is continuously invertible in $H_{\rho,0}(\mathbb{R},H_{0})$ for
all $\rho\geq1$.

\end{prop}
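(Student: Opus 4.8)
The plan is to deduce the statement directly from Theorem~\ref{-Solution-0}, applied in the Hilbert space $H_{0}$ with the choices $M_{0}:=\eta_{0}$, $M_{1}:=0$ and $A:=C_{0}^{*}C_{0}$. First I would record the structural prerequisites. Since $C_{0}$ is closed and densely defined (by the first lemma of this section) and has closed range, von Neumann's theorem guarantees that $C_{0}^{*}C_{0}$ is selfadjoint in $H_{0}$, hence in particular closed and densely defined, and that $D(C_{0}^{*}C_{0})$ is a core for $C_{0}$; moreover $\langle W|C_{0}^{*}C_{0}W\rangle_{H_{0}}=\left|C_{0}W\right|_{X}^{2}$ for $W\in D(C_{0}^{*}C_{0})$. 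As $\eta_{0}\in L(H_{0})$ is selfadjoint and non-negative, $\rho\eta_{0}+C_{0}^{*}C_{0}$ (with domain $D(C_{0}^{*}C_{0})$) is again selfadjoint, being a bounded selfadjoint perturbation of the selfadjoint operator $C_{0}^{*}C_{0}$; so $A$ is an admissible operator for Theorem~\ref{-Solution-0}.

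Second, I would verify the positive-definiteness estimate \eqref{eq:posdefaa} of Theorem~\ref{-Solution-0}. For $\rho\geq 1$ and $W\in D(A)=D(C_{0}^{*}C_{0})\subseteq D(C_{0})$ one computes
\[
\left\langle W|\left(\rho\eta_{0}+0+C_{0}^{*}C_{0}\right)W\right\rangle _{H_{0}}=\rho\left|\eta_{0}^{1/2}W\right|_{H_{0}}^{2}+\left|C_{0}W\right|_{X}^{2}\geq\left|\eta_{0}^{1/2}W\right|_{H_{0}}^{2}+\left|C_{0}W\right|_{X}^{2}\geq c_{1}\left|W\right|_{H_{0}}^{2},
\]
the last step being precisely assumption \eqref{eq:assume}. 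This establishes \eqref{eq:posdefaa} with $c_{0}:=c_{1}$ and $\rho_{0}:=1$. For the adjoint estimate \eqref{eq:posdefbb} one needs $\langle V|(\rho\eta_{0}+M_{1}^{*}+A^{*})V\rangle_{H_{0}}\geq c_{0}\left|V\right|_{H_{0}}^{2}$ for $V\in D(A^{*})$; but here $M_{1}^{*}=0=M_{1}$, $\eta_{0}^{*}=\eta_{0}$, and $A^{*}=(C_{0}^{*}C_{0})^{*}=C_{0}^{*}C_{0}=A$, so \eqref{eq:posdefbb} is literally the same inequality as \eqref{eq:posdefaa} and is already proved.

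Third, with both hypotheses of Theorem~\ref{-Solution-0} in force, its conclusion applies in $H_{0}$: for every $\rho\geq\rho_{0}=1$ and every $F\in H_{\rho,0}(\mathbb{R},H_{0})$ the evo-system \eqref{eq:evo-sys}, which here reads $(\overline{\partial_{0}\eta_{0}+C_{0}^{*}C_{0}})U=F$, has a unique solution $U\in H_{\rho,0}(\mathbb{R},H_{0})$, and one obtains in addition the causal truncated estimate with constant $1/c_{1}$ and the norm bound $\|(\overline{\partial_{0}\eta_{0}+C_{0}^{*}C_{0}})^{-1}\|\leq 1/c_{1}$. Hence $\overline{\partial_{0}\eta_{0}+C_{0}^{*}C_{0}}$ is continuously invertible in $H_{\rho,0}(\mathbb{R},H_{0})$ for all $\rho\geq 1$.

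I do not expect a genuine obstacle: the content is the correct identification of the abstract set-up. The only points requiring a little care are (a) that assumption \eqref{eq:assume}, stated for all $U\in D(C_{0})$, may be invoked on the smaller domain $D(C_{0}^{*}C_{0})$ together with the identity $\langle W|C_{0}^{*}C_{0}W\rangle_{H_{0}}=\left|C_{0}W\right|_{X}^{2}$, and (b) that the selfadjointness of $\eta_{0}$, the vanishing of $M_{1}$ and the selfadjointness of $C_{0}^{*}C_{0}$ reduce the two estimates demanded by Theorem~\ref{-Solution-0} to a single one. Everything else is a direct citation of the abstract theorem.
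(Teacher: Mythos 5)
Your proposal is correct and follows exactly the route the paper takes: the paper's own proof is a one-line application of Theorem~\ref{-Solution-0} with $M_{0}=\eta_{0}$, $M_{1}=0$, $A=C_{0}^{*}C_{0}$, with the positive definiteness conditions supplied by \eqref{eq:assume}. Your additional care — selfadjointness of $C_{0}^{*}C_{0}$ via von Neumann's theorem, the identity $\langle W|C_{0}^{*}C_{0}W\rangle_{H_{0}}=|C_{0}W|_{X}^{2}$ on $D(C_{0}^{*}C_{0})\subseteq D(C_{0})$, and the collapse of the adjoint estimate onto the direct one — simply fills in the details the paper declares ``easy to see''.
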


\begin{proof} In order to prove this theorem, it suffices to apply
Theorem \ref{-Solution-0} to $M_{0}=\eta_{0}$ and $A=C_{0}^{*}C_{0}$
note that it is easy to see that the positive definiteness conditions
of Theorem \ref{-Solution-0} are then satisfied due to assumption
\eqref{eq:assume}.\end{proof}

The next result relates the solution $U$ of

\begin{equation}
\left(\overline{\partial_{0}\eta_{0}+C_{0}^{*}C_{0}}\right)U=f\label{eq:evo-rig}
\end{equation}
or
\begin{equation}
\left(\eta+C_{0}^{*}C_{0}\partial_{0}^{-1}\right)U=\partial_{0}^{-1}f\label{eq:basic-rigorous}
\end{equation}
in $H_{0}$ to the equation \eqref{eq:basic-eq}.

\begin{prop}\label{prop:just} Assume \eqref{eq:assume}. Let $U\coloneqq\left(\overline{\partial_{0}\eta_{0}+C_{0}^{*}C_{0}}\right)^{-1}F$
for some $F\in H_{\rho,0}(\mathbb{R},H_{0})$ for some $\rho\geq1$.
Then $U$ satisfies \eqref{eq:basic-eq}.

\end{prop}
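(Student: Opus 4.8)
The aim is to show that the $H_{0}$-valued solution $U$ of the reduced equation \eqref{eq:evo-rig} also solves the original equation \eqref{eq:basic-eq}. The mechanism behind this is that the spatial operators $\eta$ and $C^{*}C$ do not ``see'' the complement $H_{0}^{\perp}=N(\eta)\cap N(C)$: they map into $H_{0}$ and restrict on $H_{0}$ to precisely $\eta_{0}$ and $C_{0}^{*}C_{0}$, while on $H_{0}^{\perp}$ both vanish; thus passing from \eqref{eq:evo-rig} to \eqref{eq:basic-eq} only adds a direction on which everything -- including $\partial_{0}\eta$ -- is zero.

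The plan is as follows. First I would record the relevant orthogonality relations: since $\eta=\eta^{*}$ one has $R(\eta)\subseteq\overline{R(\eta)}=N(\eta)^{\perp}\subseteq(N(\eta)\cap N(C))^{\perp}=H_{0}$; since $C$ is closed, $R(C^{*}C)\subseteq R(C^{*})\subseteq\overline{R(C^{*})}=N(C)^{\perp}\subseteq H_{0}$; and $H_{0}^{\perp}=N(\eta)\cap N(C)\subseteq N(\eta)\cap N(C^{*}C)$. From these I would deduce that for $x\in D(C^{*}C)\cap H_{0}$ one has $\eta x\in H_{0}$ with $\eta x=\eta_{0}x$, and $x\in D(C_{0}^{*}C_{0})$ with $C_{0}^{*}C_{0}x=C^{*}Cx\in H_{0}$; the latter uses $C_{0}=C\iota_{H_{0}\to H}$ together with $C_{0}^{*}=\overline{\iota_{H_{0}\to H}^{*}C^{*}}\supseteq\iota_{H_{0}\to H}^{*}C^{*}$ and $C^{*}Cx\in H_{0}$. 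Consequently, functions in $\interior C_{1}(\mathbb{R},D(C^{*}C)\cap H_{0})$ are $D(C_{0}^{*}C_{0})$-valued and $\partial_{0}^{*}\eta+C^{*}C$ and $\partial_{0}^{*}\eta_{0}+C_{0}^{*}C_{0}$ agree on them.

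Then I would test against the adjoint, as in the proof of Theorem \ref{-Solution-0}. Using $(\partial_{0}\eta+C^{*}C)^{*}=\overline{\partial_{0}^{*}\eta+C^{*}C}$ (selfadjointness of $\eta$ and the density arguments of \cite[(the proof of) Theorem 2.13]{PTW_Evo}) and that $\interior C_{1}(\mathbb{R},D(C^{*}C))$ is a core for this operator, it suffices to verify $\langle U\,|\,(\partial_{0}^{*}\eta+C^{*}C)\Phi\rangle_{\rho,0,0}=\langle F\,|\,\Phi\rangle_{\rho,0,0}$ for all $\Phi\in\interior C_{1}(\mathbb{R},D(C^{*}C))$. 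For such $\Phi$, split pointwise $\Phi=\Phi_{0}+\Phi_{1}$ with $\Phi_{0}\coloneqq P_{H_{0}}\Phi$. Since $\Phi_{1}$ is $(N(\eta)\cap N(C^{*}C))$-valued, $\eta\Phi_{1}=0=C^{*}C\Phi_{1}$, hence $\Phi_{0}=\Phi-\Phi_{1}$ is again $D(C^{*}C)$-valued and, $P_{H_{0}}$ being bounded and commuting with $\partial_{0}$, $\Phi_{0}\in\interior C_{1}(\mathbb{R},D(C^{*}C)\cap H_{0})$. Using the first step and then \eqref{eq:evo-rig},
\begin{align*}
\langle U\,|\,(\partial_{0}^{*}\eta+C^{*}C)\Phi\rangle_{\rho,0,0} & =\langle U\,|\,(\partial_{0}^{*}\eta+C^{*}C)\Phi_{0}\rangle_{\rho,0,0}\\
 & =\langle U\,|\,(\partial_{0}^{*}\eta_{0}+C_{0}^{*}C_{0})\Phi_{0}\rangle_{\rho,0,0}\\
 & =\langle(\overline{\partial_{0}\eta_{0}+C_{0}^{*}C_{0}})U\,|\,\Phi_{0}\rangle_{\rho,0,0}\\
 & =\langle F\,|\,\Phi_{0}\rangle_{\rho,0,0}=\langle F\,|\,\Phi\rangle_{\rho,0,0},
\end{align*}
the last step because $F$ is $H_{0}$-valued, so $\langle F\,|\,\Phi_{0}\rangle_{\rho,0,0}=\langle F\,|\,P_{H_{0}}\Phi\rangle_{\rho,0,0}=\langle P_{H_{0}}F\,|\,\Phi\rangle_{\rho,0,0}$. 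This yields $U\in D(\overline{\partial_{0}\eta+C^{*}C})$ with $(\overline{\partial_{0}\eta+C^{*}C})U=F$; by Remark \ref{NoteC} the same computation reads the equation off in $H_{\rho,-1}(\mathbb{R},D(C^{*}C)')$.

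I expect the only real obstacle to be the operator-theoretic bookkeeping in the first step -- that $C_{0}^{*}C_{0}$, defined through the closure in $C_{0}^{*}=\overline{\iota_{H_{0}\to H}^{*}C^{*}}$, genuinely coincides with $C^{*}C$ on the smooth $H_{0}$-valued test functions used in the reduction -- together with the identities $(\partial_{0}\eta+C^{*}C)^{*}=\overline{\partial_{0}^{*}\eta+C^{*}C}$ and the core property of $\interior C_{1}(\mathbb{R},D(C^{*}C))$; all of these, however, are of the same nature as, and follow from, the density arguments already used for Theorem \ref{-Solution-0}. The Hilbert-space orthogonality relations and the pointwise splitting of the test functions are routine.
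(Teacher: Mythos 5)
Your argument is correct, but it takes a genuinely different route from the paper's. The paper does not dualise: it first applies $\partial_{0}^{-1}$ to gain regularity on the \emph{solution} side, so that $\partial_{0}^{-1}U\in D(C_{0}^{*}C_{0})$, and then proves the two identities $\eta_{0}U=\eta U$ and $C_{0}^{*}C_{0}\partial_{0}^{-1}U=C^{*}C\partial_{0}^{-1}U$ by testing against elements of $D(C)$ split as $D(C_{0})\oplus H_{0}^{\perp}$ --- the same orthogonality facts you list, but applied to the solution rather than to test functions. From these it introduces $V\coloneqq-C\partial_{0}^{-1}U$, obtains the time-integrated first-order system \eqref{eq:Max-sim} as an identity in $H_{\rho,0}\left(\mathbb{R},H\right)$, and recovers \eqref{eq:basic-eq} by differentiating distributionally. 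You instead shift all the regularity onto the test functions and verify $\left\langle U|\left(\partial_{0}^{*}\eta+C^{*}C\right)\Phi\right\rangle _{\rho,0,0}=\left\langle F|\Phi\right\rangle _{\rho,0,0}$ on $\interior C_{1}\left(\mathbb{R},D\left(C^{*}C\right)\right)$; the splitting $\Phi=P_{H_{0}}\Phi+(1-P_{H_{0}})\Phi$ and the identification of the reduced operators on $H_{0}$-valued data are sound. Two remarks on what each route buys. First, to upgrade your identity from ``holds against a dense family of test functions'' to $U\in D\left(\overline{\partial_{0}\eta+C^{*}C}\right)$ you genuinely need that $\interior C_{1}\left(\mathbb{R},D\left(C^{*}C\right)\right)$ is a \emph{core} for $\left(\partial_{0}\eta+C^{*}C\right)^{*}=\overline{\partial_{0}^{*}\eta+C^{*}C}$, not merely dense in $H_{\rho,0}\left(\mathbb{R},H\right)$; this is of the same nature as the density arguments behind Theorem \ref{-Solution-0}, but it is an extra ingredient the paper's proof of this proposition avoids --- and the statement, read via Remark \ref{NoteC}, only requires the equation in $H_{\rho,-1}\left(\mathbb{R},D\left(C^{*}C\right)^{\prime}\right)$, which your computation already delivers without the core property. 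Second, the paper's detour through $V=-C\partial_{0}^{-1}U$ is not wasted effort: the system \eqref{eq:Max-sim}/\eqref{eq:Max-simsim} reappears in Theorem \ref{thm:Solution} and is exactly what reconstructs the magnetic field $\mathrm{H}$ in the eddy current application, so the constructive route produces a by-product that your dual argument does not.
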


\begin{proof} Since $\partial_{0}^{-1}$ commutes with $\left(\overline{\partial_{0}\eta_{0}+C_{0}^{*}C_{0}}\right)^{-1}$,
we infer that \eqref{eq:basic-rigorous} is in fact a consequence
of \eqref{eq:evo-rig}. Moreover, we read off that
\[
\partial_{0}^{-1}U\in D\left(C_{0}^{*}C_{0}\right)
\]
and so in particular
\begin{equation}
C_{0}^{*}C_{0}\partial_{0}^{-1}U=C^{*}C\partial_{0}^{-1}U\label{eq:eqcc0}
\end{equation}
and 
\begin{equation}
\eta_{0}U=\eta U.\label{eq:eqeta}
\end{equation}

Indeed, since
\begin{eqnarray*}
\left\langle \phi|C_{0}^{*}C_{0}\partial_{0}^{-1}U\right\rangle _{H} & = & \left\langle C_{0}\phi|C_{0}\partial_{0}^{-1}U\right\rangle _{X}\\
 & = & \left\langle C\phi|C\partial_{0}^{-1}U\right\rangle _{X}
\end{eqnarray*}
 and
\begin{eqnarray*}
\left\langle \phi|\eta_{0}U\right\rangle _{H} & = & \left\langle \phi|\eta U\right\rangle _{H}
\end{eqnarray*}
for all $\phi\in D\left(C_{0}\right)=D\left(C\right)\cap H_{0}$,
as well as
\begin{eqnarray*}
\left\langle \psi|C_{0}^{*}C_{0}\partial_{0}^{-1}U\right\rangle _{H} & = & \left\langle C\psi|C\partial_{0}^{-1}U\right\rangle _{X}=0
\end{eqnarray*}
and 
\[
\left\langle \psi|\eta_{0}U\right\rangle _{H}=\left\langle \psi|\eta U\right\rangle _{H}=0
\]
for $\psi\in H_{0}^{\perp}=N\left(C\right)\cap N\left(\eta\right)$,
we have
\begin{eqnarray*}
\left\langle V|C_{0}^{*}C_{0}\partial_{0}^{-1}U\right\rangle _{H} & = & \left\langle CV|C\partial_{0}^{-1}U\right\rangle _{X}
\end{eqnarray*}
 and
\begin{eqnarray*}
\left\langle V|\eta_{0}U\right\rangle _{H} & = & \left\langle V|\eta U\right\rangle _{H}
\end{eqnarray*}
for all $V\in D\left(C\right)$. Thus, we read off \eqref{eq:eqeta}
and
\[
C\partial_{0}^{-1}U\in D\left(C^{*}\right)
\]
as well as
\[
C_{0}^{*}C_{0}\partial_{0}^{-1}U=C^{*}C\partial_{0}^{-1}U,
\]
that is, \eqref{eq:eqcc0}. 

Letting now
\[
V\coloneqq-C\partial_{0}^{-1}U
\]
we obtain
\begin{equation}
\begin{array}{rcl}
V+C\partial_{0}^{-1}U & = & 0,\\
\eta U-C^{*}V & = & \partial_{0}^{-1}F.
\end{array}\label{eq:Max-sim}
\end{equation}
Thus, we find that
\begin{equation}
\begin{array}{rcl}
\partial_{0}V+CU & = & 0,\\
\eta U-C^{*}V & = & \partial_{0}^{-1}F,
\end{array}\label{eq:Max-simsim}
\end{equation}
and so also
\[
\partial_{0}\eta U+C^{*}CU=F
\]
hold in a distributional sense. In particular, this confirms that
we have indeed solved the original equation \eqref{eq:basic-eq}.\end{proof}

\begin{rem}\label{rem:reg} For $F\in H_{\rho,0}(\mathbb{R};H_{0})$
we set $U\coloneqq(\partial_{0}\eta_{0}+C_{0}^{\ast}C_{0})^{-1}F\in H_{\rho,0}(\mathbb{R};H).$
Then, there exists a sequence $(U_{n})_{n\in\mathbb{N}}$ in $H_{\rho,1}(\mathbb{R};D(C_{0}^{\ast}C_{0}))$
such $U_{n}\to U$ and $(\partial_{0}\eta_{0}+C_{0}^{\ast}C_{0})U_{n}\to F.$
For $n\in\mathbb{N}$ we estimate 
\begin{align}
|C_{0}U_{n}|_{\rho,0,0}^{2} & \leq\rho\langle\eta_{0}U_{n}|U_{n}\rangle_{\rho,0,0}+\langle C_{0}^{\ast}C_{0}U_{n}|U_{n}\rangle_{\rho,0,0}\nonumber \\
 & =\langle\left(\partial_{0}\eta_{0}+C_{0}^{\ast}C_{0}\right)U_{n}|U_{n}\rangle_{\rho,0,0}\label{eq:Un}
\end{align}
and since the right-hand side is bounded, we infer that (up to a subsequence)
$C_{0}U_{n}\rightharpoonup w$ for some $w\in H_{\rho,0}(\mathbb{R};X).$
By the closedness (and hence, weak closedness) of $C_{0},$ we derive
that $U\in D(C_{0})$ and $w=C_{0}u.$ In particular $|C_{0}U|_{\rho,0,0}\leq\liminf_{n\to\infty}|C_{0}U_{n}|_{\rho,0,0}.$
Letting $n$ tend to infinity in (\ref{eq:Un}) we get 
\begin{align*}
\langle F|U\rangle_{\rho,0,0} & =\rho\langle\eta_{0}U|U\rangle_{\rho,0,0}+\lim_{n\to\infty}\langle C_{0}U_{n}|C_{0}U_{n}\rangle_{\rho,0,0}\\
 & \geq\rho\langle\eta_{0}U|U\rangle_{\rho,0,0}+\langle C_{0}U|C_{0}U\rangle_{\rho,0,0}\\
 & =\frac{1}{2}\left(2\rho\langle\eta_{0}U|U\rangle_{\rho,0,0}+\langle C_{0}U|C_{0}U\rangle_{\rho,0,0}\right)+\frac{1}{2}\langle C_{0}U|C_{0}U\rangle_{\rho,0,0}\\
 & \geq\frac{1}{2}c_{1}|U|_{\rho,0,0}^{2}+\frac{1}{2}|C_{0}U|_{\rho,0,0}^{2}\\
 & \geq\tilde{c}_{1}|U|_{\rho,0,1}^{2}
\end{align*}
with $\tilde{c}_{1}\coloneqq\frac{1}{2}\min\{1,c_{1}\}.$ Estimating
the left hand side by $|F|_{\rho,0,-1}|U|_{\rho,0,1}$ we end up with
\[
|U|_{\rho,0,1}\leq\frac{1}{\tilde{c}_{1}}|F|_{\rho,0,-1}.
\]
Thus, the solution operator $S$ attains values in $H_{\rho,0}(\mathbb{R};D(C_{0}))$
and can be extended continuously to $H_{\rho,0}(\mathbb{R};D(C_{0}^{\ast})').$
This is a refinement of the earlier observation in the general case,
see Remarks \ref{Dual-space} and \ref{NoteC}.

\end{rem}

\begin{example} As a quick example, it might be illustrative to apply
the observations in the previous remark to the (non-degenerate) case
of the heat equation. So, take $\eta=1$ to be the identity in $H=L^{2}(\Omega)$
and $C=\interior\grad$ with $D(C)=H_{0}^{1}(\Omega)$. Then the previous
remark confirmed a solution theory for the heat equation $\left(\partial_{0}-\Delta\right)U=F$
for right-hand sides $F$ taking values in $H^{-1}(\Omega)$. By the
general theory developed here, we obtain that $U$ assumes values
even in $H_{0}^{1}(\Omega)$. 

\end{example}

For sake of later reference let us summarise the core of the above
observations in the following theorem.

\begin{thm}\label{thm:Solution} Let $C:D\left(C\right)\subseteq H\to X$
be a closed densely defined linear operator with closed range and
such that \eqref{eq:assume} holds. Then, for every $F\in H_{\rho,0}\left(\mathbb{R},D(C_{0}^{\ast})'\right)$
there is a unique (weak) solution $U\in H_{\rho,0}\left(\mathbb{R},D\left(C_{0}\right)\right)$
of \eqref{eq:basic-rigorous} or equivalently of the system \eqref{eq:Max-sim}.
Moreover the solution operator $S:H_{\rho,0}\left(\mathbb{R},H_{0}\right)\to H_{\rho,0}\left(\mathbb{R},D\left(C_{0}\right)\right)$
is continuous ($\left|\:\cdot\:\right|_{\rho,0,1}$ denotes the norm
of $H_{\rho,0}\left(\mathbb{R},D\left(C_{0}\right)\right)$ and causal
in the sense that
\[
\left|\chi_{_{]-\infty,a]}}SF\right|_{\rho,0,1}\leq C_{1}\:\left|\chi_{_{]-\infty,a]}}F\right|_{\rho,0,-1}
\]
for some positive $C_{1}$ uniformly in $a\in\mathbb{R}$ and $F\in H_{\rho,0}\left(\mathbb{R},H_{0}\right)$
as long as $\rho\in\oi0\infty$ is sufficiently large.\end{thm}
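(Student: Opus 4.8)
The theorem collects what has been established above, so the plan is essentially to assemble the pieces, adding only a cut‑off version of the energy estimate of Remark \ref{rem:reg} to obtain the causal bound in the graph norm. Fix $\rho\geq1$ and set $S:=\bigl(\overline{\partial_0\eta_0+C_0^{\ast}C_0}\bigr)^{-1}$; by Theorem \ref{-Solution-0} applied with $M_0=\eta_0$, $M_1=0$, $A=C_0^{\ast}C_0$ together with the coercivity \eqref{eq:assume}, this is a continuous operator on $H_{\rho,0}(\mathbb{R},H_0)$. For $F\in H_{\rho,0}(\mathbb{R},H_0)$, Proposition \ref{prop:just} shows that $U:=SF$ solves the reduced equation \eqref{eq:evo-rig}, hence \eqref{eq:basic-rigorous}, and also the mixed system \eqref{eq:Max-sim} and the original problem \eqref{eq:basic-eq}; this settles existence and the equivalence of the listed formulations. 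For uniqueness, a second solution $\tilde U\in H_{\rho,0}(\mathbb{R},D(C_0))$ of \eqref{eq:basic-rigorous} would put $U-\tilde U$ in the kernel of the continuous extension of $\overline{\partial_0\eta_0+C_0^{\ast}C_0}$ to the larger space, which is injective because its restriction to $H_{\rho,0}(\mathbb{R},H_0)$ is continuously invertible (cf.\ Remarks \ref{Dual-space} and \ref{NoteC}). Finally, Remark \ref{rem:reg} supplies the bound $|SF|_{\rho,0,1}\leq\tilde{c}_1^{-1}|F|_{\rho,0,-1}$ with $\tilde{c}_1=\tfrac{1}{2}\min\{1,c_1\}$, hence the continuous extension of $S$ to an operator $H_{\rho,0}(\mathbb{R},D(C_0^{\ast})')\to H_{\rho,0}(\mathbb{R},D(C_0))$; by continuity the extended $U=SF$ still satisfies \eqref{eq:basic-rigorous}, now read in $H_{\rho,0}(\mathbb{R},D(C_0^{\ast})')$. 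This already yields existence, uniqueness, and the continuity assertion.

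It remains to prove the causal estimate, and here I would rerun the argument of Remark \ref{rem:reg} with the cut‑off $\chi_{_{]-\infty,a]}}$ inserted. Given $F\in H_{\rho,0}(\mathbb{R},H_0)$, choose $U_n\in H_{\rho,1}(\mathbb{R};D(C_0^{\ast}C_0))$ with $U_n\to U=SF$ and $G_n:=(\partial_0\eta_0+C_0^{\ast}C_0)U_n\to F$ in $H_{\rho,0}(\mathbb{R},H_0)$. Since the boundary contribution at $-\infty$ vanishes for $U_n\in H_{\rho,1}$, the integration‑by‑parts step producing \eqref{eq:posdefxx} in the proof of Theorem \ref{-Solution-0}, now with $M_0=\eta_0$, $M_1=0$, $A=C_0^{\ast}C_0$, gives
\[
\langle\chi_{_{]-\infty,a]}}U_n\,|\,G_n\rangle_{\rho,0,0}
=\tfrac{1}{2}\langle U_n|\eta_0 U_n\rangle_0(a)\,\e^{-2\rho a}
+\rho\,\langle\chi_{_{]-\infty,a]}}\eta_0 U_n\,|\,U_n\rangle_{\rho,0,0}
+\bigl|\chi_{_{]-\infty,a]}}C_0 U_n\bigr|_{\rho,0,0}^2 .
\]
Discarding the non‑negative boundary term at $t=a$, splitting off a factor $\tfrac{1}{2}$ and invoking \eqref{eq:assume} pointwise in time (admissible since $\rho\geq1$), exactly as in Remark \ref{rem:reg}, bounds the right‑hand side below by $\tilde{c}_1|\chi_{_{]-\infty,a]}}U_n|_{\rho,0,1}^2$; estimating the left‑hand side from above by $|\chi_{_{]-\infty,a]}}U_n|_{\rho,0,1}\,|\chi_{_{]-\infty,a]}}G_n|_{\rho,0,-1}$ and cancelling yields $|\chi_{_{]-\infty,a]}}U_n|_{\rho,0,1}\leq\tilde{c}_1^{-1}|\chi_{_{]-\infty,a]}}G_n|_{\rho,0,-1}$. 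Passing to the limit — using $G_n\to F$ in $|\cdot|_{\rho,0,-1}$, the weak convergence $\chi_{_{]-\infty,a]}}C_0 U_n\rightharpoonup\chi_{_{]-\infty,a]}}C_0 U$ along a subsequence, and weak lower semicontinuity of $|\cdot|_{\rho,0,1}$, again as in Remark \ref{rem:reg} — produces the asserted estimate with $C_1=\tilde{c}_1^{-1}$, uniformly in $a\in\mathbb{R}$ and $F$, for every $\rho\geq1$. In particular, if $F$ vanishes on $]-\infty,a]$ then so does $SF$, i.e.\ causality.

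The only step I expect to demand genuine care is this cut‑off estimate: one must give meaning to the pairing $\langle\chi_{_{]-\infty,a]}}U\,|\,F\rangle_{\rho,0,0}$ and to the integration by parts when $F$ lies only in $H_{\rho,0}(\mathbb{R},D(C_0^{\ast})')$ and $U$ is not a priori time‑differentiable. The remedy is the one already used in Remark \ref{rem:reg} for the case $a=\infty$: hold $a$ fixed, carry out every manipulation on the regular approximants $U_n$ — where the boundary term at $t=a$ is classical and of the correct (non‑negative) sign — and pass to the limit only at the end, exploiting that $\tilde{c}_1$ does not depend on $a$. Apart from that, the proof is a matter of quoting the earlier propositions and remarks.
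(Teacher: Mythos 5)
Your proposal is correct and follows essentially the same route as the paper: existence, uniqueness and the equivalence of formulations are assembled from Theorem \ref{-Solution-0}, Proposition \ref{prop:just} and Remark \ref{rem:reg}, and causality is obtained by inserting the cut-off $\chi_{_{]-\infty,a]}}$ into the energy estimate of Remark \ref{rem:reg}, discarding the non-negative boundary term $\tfrac12|\eta_0^{1/2}U(a)|_0^2\e^{-2\rho a}$, and concluding by continuous extension. The only difference is cosmetic: the paper performs the cut-off estimate directly for $U\in H_{\rho,1}(\mathbb{R};D(C_0^{\ast}C_0))$ and invokes continuity, whereas you spell out the approximation and weak lower semicontinuity argument explicitly (and your constant $2/\min\{1,c_1\}$ agrees with the paper's $2/c_1$ up to the harmless normalisation $c_1\leq1$).
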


\begin{proof}The result largely follows from our previous considerations.
The sharper regularity statement $U\in H_{\rho,0}\left(\mathbb{R},D\left(C_{0}\right)\right)$
and the sharper continuous dependence statement follows by Remark
\ref{rem:reg}. The claim of causality follows from a slight refinement
of the estimates along the reasoning of Remark \ref{rem:reg}. Indeed,
we have for all sufficiently large $\rho\in\oi0\infty$
\begin{align*}
 & \left|\chi_{_{]-\infty,a]}}U\right|_{\rho,0,1}\:\left|\chi_{_{]-\infty,a]}}\left(\partial_{0}\eta_{0}+C_{0}^{*}C_{0}\right)U\right|_{\rho,0,-1}\\
 & \geq\left\langle \chi_{_{]-\infty,a]}}U|\left(\partial_{0}\eta_{0}+C_{0}^{*}C_{0}\right)U\right\rangle _{\rho,0,0}\\
 & =\rho\left|\eta_{0}^{1/2}\chi_{_{]-\infty,a]}}U\right|_{\rho,0,0}^{2}+\frac{1}{2}\left|\eta_{0}^{1/2}U\left(a\right)\right|_{0}^{2}\exp\left(-2\rho a\right)+\left|C_{0}\chi_{_{]-\infty,a]}}U\right|_{\rho,0,0}^{2}\\
 & \geq\frac{1}{2}c_{1}\left|\chi_{_{]-\infty,a]}}U\right|_{\rho,0,0}^{2}+\frac{1}{2}\left|C_{0}\chi_{_{]-\infty,a]}}U\right|_{\rho,0,0}^{2}\\
 & \geq\frac{1}{2}c_{1}\left(\left|\chi_{_{]-\infty,a]}}U\right|_{\rho,0,0}^{2}+\left|C_{0}\chi_{_{]-\infty,a]}}U\right|_{\rho,0,0}^{2}\right)=\frac{1}{2}c_{1}\left|\chi_{_{]-\infty,a]}}U\right|_{\rho,0,1}^{2}
\end{align*}
for $U\in H_{\rho,1}(\mathbb{R};D(C_{0}^{\ast}C_{0})$) from which
\[
\left|\chi_{_{]-\infty,a]}}U\right|_{\rho,0,1}\leq\frac{2}{c_{1}}\;\left|\chi_{_{]-\infty,a]}}\left(\partial_{0}\eta_{0}+C_{0}^{*}C_{0}\right)U\right|_{\rho,0,-1}
\]
follows. The result then follows by continuous extension.\end{proof}

Note that the estimate obtained here is a slightly stronger causality
estimate than available in the general case of Theorem \ref{-Solution-0}.

\begin{rem}Of course we also have (since $\left|\phi\right|_{\rho,0,0}\leq\left|\phi\right|_{\rho,0,1}$
for $\phi\in H_{\rho,0}\left(\mathbb{R},D\left(C_{0}\right)\right)$)

\[
\left|SF\right|_{D\left(\overline{\partial_{0}\eta_{0}+C_{0}^{*}C_{0}}\right)}\leq\sqrt{1+C_{1}^{2}}\:\left|F\right|_{\rho,0,0}.
\]
\end{rem}

We also note the resulting energy balance law for solutions of \eqref{eq:basic-eq}.

\begin{thm}\label{thm:(Energy-balance-law)}(Energy balance law)
For a right-hand side $F\in H_{\rho,1}\left(\mathbb{R},H_{0}\right)$
with $F=0$ on $\left[T_{0},T_{1}\right]$ we have for the solution
$U\in H_{\rho,1}\left(\mathbb{R},D\left(C_{0}\right)\right)$ 
\begin{eqnarray*}
 &  & \frac{1}{2}\left\langle U|\eta U\right\rangle _{H}\left(T_{1}\right)+\int_{\left[T_{0},T_{1}\right]}\left\langle CU|CU\right\rangle _{H}=\\
 &  & =\frac{1}{2}\left\langle U|\eta U\right\rangle _{H}\left(T_{0}\right).
\end{eqnarray*}
\end{thm}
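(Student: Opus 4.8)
The plan is to derive the identity by testing the reduced evo-equation with the solution $U$ itself on the window $[T_0,T_1]$ and integrating by parts in time; the only non-automatic ingredient is enough regularity of $U$ to make this pairing literally meaningful pointwise in time.

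First I would upgrade the regularity of the solution. Since $\partial_0^{-1}$ commutes with the solution operator $S=\overline{\left(\partial_0\eta_0+C_0^{\ast}C_0\right)}^{-1}$ and $F\in H_{\rho,1}(\mathbb{R},H_0)$, one gets $\partial_0 U=S\partial_0 F\in H_{\rho,0}(\mathbb{R},D(C_0))$ by Theorem~\ref{thm:Solution} (equivalently by the estimates in Remark~\ref{rem:reg}); together with $U\in H_{\rho,1}(\mathbb{R},D(C_0))$ this yields $\eta_0 U\in H_{\rho,1}(\mathbb{R},H_0)$ and $C_0 U\in H_{\rho,1}(\mathbb{R},X)$. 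In particular $C_0^{\ast}C_0 U=F-\partial_0\eta_0 U\in H_{\rho,0}(\mathbb{R},H_0)$, so $U$ takes values in $D(C_0^{\ast}C_0)$ for a.e.\ $t$ and the evo-system holds as a pointwise-a.e.\ identity $\partial_0\eta_0 U(t)+C_0^{\ast}C_0 U(t)=F(t)$ in $H_0$ (in the spirit of Remarks~\ref{NoteC} and~\ref{rem:reg}); moreover $t\mapsto U(t)$ then has a locally absolutely continuous representative, so that the point evaluations at $T_0,T_1$ and the integral appearing in the assertion are well defined.

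Next, on $[T_0,T_1]$ we have $F=0$, hence for a.e.\ $t\in[T_0,T_1]$, pairing with $U(t)\in D(C_0^{\ast}C_0)$ in $H_0$ and using that $\eta_0$ is bounded, selfadjoint and commutes with $\partial_0$,
\[
\tfrac12\tfrac{d}{dt}\langle\eta_0 U(t)|U(t)\rangle_{H_0}=\langle\partial_0\eta_0 U(t)|U(t)\rangle_{H_0}=-\langle C_0^{\ast}C_0 U(t)|U(t)\rangle_{H_0}=-|C_0 U(t)|_X^2,
\]
the first equality being the product rule for $H^1$-in-time functions with values in a Hilbert space (legitimate since $U,\partial_0 U\in L^2_{\mathrm{loc}}(\mathbb{R},H_0)$). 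Integrating over $[T_0,T_1]$ by the fundamental theorem of calculus for $t\mapsto\langle\eta_0 U(t)|U(t)\rangle_{H_0}$, and then substituting $\langle\eta_0 U(t)|U(t)\rangle_{H_0}=\langle\eta U(t)|U(t)\rangle_H$ (the computation carried out in the remark following~\eqref{eq:assume}, using $\eta_0=\iota_{H_0\to H}^{\ast}\eta\,\iota_{H_0\to H}$) together with $C_0 U(t)=CU(t)$ for $U(t)\in D(C_0)=D(C)\cap H_0$, one arrives at
\[
\tfrac12\langle U|\eta U\rangle_H(T_1)+\int_{T_0}^{T_1}\langle CU|CU\rangle_X\,dt=\tfrac12\langle U|\eta U\rangle_H(T_0),
\]
which is the asserted energy balance law.

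I expect the only genuinely delicate point to be the first step: turning the a priori weak, $H_{\rho,-1}(\mathbb{R},D(C_0^{\ast})')$-valued form of the equation into a pointwise-in-time identity in $H_0$ that may be tested against $U(t)$. This is precisely what the hypothesis $F\in H_{\rho,1}(\mathbb{R},H_0)$ buys, via the commutation of $\partial_0^{-1}$ with $S$ and the sharpened mapping properties of $S$ recorded in Remark~\ref{rem:reg}; once that is secured, the remainder is the standard Hilbert-space energy computation.
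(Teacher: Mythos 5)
Your proposal is correct and follows essentially the same route as the paper: test the equation with $U$ on $[T_0,T_1]$, integrate by parts in time (product rule for the quadratic form $\langle \eta U|U\rangle$), and justify the point evaluations at $T_0,T_1$ via Sobolev embedding, with the needed regularity coming from the commutation of the time-derivative with the solution operator. You simply spell out in more detail the regularity bootstrap and the identifications $\eta_0 U=\eta U$, $C_0U=CU$ that the paper leaves implicit.
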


\begin{proof}For $F=0$ on $\left[T_{0},T_{1}\right]$ we have
\begin{align*}
0 & =\int_{\left[T_{0},T_{1}\right]}\left\langle U|\partial_{0}\eta U\right\rangle _{H}+\int_{\left[T_{0},T_{1}\right]}\left\langle CU|CU\right\rangle _{H}\\
 & =\frac{1}{2}\left\langle U|\eta U\right\rangle _{H}\left(T_{1}\right)-\frac{1}{2}\left\langle U|\eta U\right\rangle _{H}\left(T_{0}\right)+\\
 & +\int_{\left[T_{0},T_{1}\right]}\left\langle CU|CU\right\rangle _{H},
\end{align*}
where we have used the Sobolev embedding theorem to justify the integration
by parts. Furthermore note that the time-derivative commutes with
the solution operator. \end{proof}

For later purposes we analyse the underlying Hilbert spaces 
\begin{eqnarray*}
H & = & H_{0}\oplus H_{0}^{\perp}\\
H_{0} & = & \left(N\left(C\right)\cap N\left(\eta\right)\right)^{\perp}
\end{eqnarray*}
further. \textcolor{black}{For a Hilbert space $K$ and a subspace $L\subseteq K$, we define 
\[
   K\ominus L\coloneqq K \cap L^{\bot}.
\]}

\begin{lem}\label{lem:For-later-purposes}We have
\begin{eqnarray*}
H_{0} & = & R\left(C^{*}\right)\oplus\left(N\left(C\right)\cap H_{0}\right)\\
 & = & R\left(C^{*}\right)\oplus\left(N\left(C\right)\cap\overline{R\left(\eta\right)}\right)\oplus\left(\left(N\left(C\right)\cap H_{0}\right)\ominus\left(N\left(C\right)\cap\overline{R\left(\eta\right)}\right)\right).
\end{eqnarray*}
\end{lem}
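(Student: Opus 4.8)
The plan is to obtain both identities from the standard orthogonal decomposition attached to a closed densely defined operator with closed range, supplemented by the selfadjointness of $\eta$. First I would recall that, $C$ being closed and densely defined, $N(C)=R(C^{*})^{\perp}$, so that $H=\overline{R(C^{*})}\oplus N(C)$; since $C$ has closed range, so does $C^{*}$ (closed range theorem), whence $\overline{R(C^{*})}=R(C^{*})$ and $H=R(C^{*})\oplus N(C)$ is an orthogonal decomposition of $H$.

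Next I would check that $R(C^{*})\subseteq H_{0}$, which is immediate since $R(C^{*})\perp N(C)\supseteq N(C)\cap N(\eta)=H_{0}^{\perp}$. Then, given $x\in H_{0}$, I would decompose $x=u+v$ with $u\in R(C^{*})$ and $v\in N(C)$ according to $H=R(C^{*})\oplus N(C)$; since $u\in R(C^{*})\subseteq H_{0}$ and $x\in H_{0}$, also $v=x-u\in H_{0}$, so that $v\in N(C)\cap H_{0}$, and $u\perp v$ because $R(C^{*})\perp N(C)$. This yields the first claimed identity $H_{0}=R(C^{*})\oplus(N(C)\cap H_{0})$.

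For the second identity it then suffices to split $N(C)\cap H_{0}$ further. Using $\eta=\eta^{*}$ one has $\overline{R(\eta)}=N(\eta)^{\perp}$, hence $N(C)\cap\overline{R(\eta)}=N(C)\cap N(\eta)^{\perp}$ is contained in $N(C)$ and is orthogonal to $N(C)\cap N(\eta)=H_{0}^{\perp}$, so it is a closed subspace of $N(C)\cap H_{0}$. Decomposing the Hilbert space $N(C)\cap H_{0}$ orthogonally along this closed subspace gives
\[
N(C)\cap H_{0}=\big(N(C)\cap\overline{R(\eta)}\big)\oplus\big((N(C)\cap H_{0})\ominus(N(C)\cap\overline{R(\eta)})\big),
\]
and inserting this into the first identity finishes the proof. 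There is no serious obstacle here; the only points needing care are the use of the closed range theorem to remove the closure on $R(C^{*})$, and checking that $N(C)\cap\overline{R(\eta)}\subseteq H_{0}$ so that the notation $\ominus$ is meaningful — both of which follow at once from $H_{0}^{\perp}=N(C)\cap N(\eta)$ together with $\eta=\eta^{*}$.
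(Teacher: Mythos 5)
Your proof is correct and follows essentially the same route as the paper: both start from $H=R(C^{*})\oplus N(C)$ (with the closed range theorem ensuring $R(C^{*})$ is closed), intersect with $H_{0}$ using $R(C^{*})=N(C)^{\perp}\subseteq H_{0}$, and then split $N(C)\cap H_{0}$ along the closed subspace $N(C)\cap\overline{R(\eta)}=N(C)\cap N(\eta)^{\perp}$ via the projection theorem. You merely spell out the intersection step and the inclusion checks a bit more explicitly than the paper does.
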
\begin{proof}By the projection theorem we have 
\[
H=R(C^{\ast})\oplus N(C).
\]
Intersecting both sides with $H_{0}$ and using that $R(C^{\ast})=N(C){}^{\bot}\subseteq H_{0}$
we obtain the first decomposition. For the second one, we observe
that $N(C)\cap\overline{R\left(\eta\right)}$ is a closed subspace
of $N(C)\cap H_{0},$ since $\overline{R\left(\eta\right)}=N(\eta)^{\bot}\subseteq H_{0}$
and hence, by the projection theorem 
\[
N(C)\cap H_{0}=\left(N(C)\cap\overline{R\left(\eta\right)}\right)\oplus\left((N(C)\cap H_{0})\ominus\left(N(C)\cap\overline{R\left(\eta\right)}\right)\right),
\]
which gives the second decomposition.\end{proof}

\begin{example} As a more elaborate illustrational example let us
consider the solution to the linear part of the so-called ``bidomain
model''\footnote{We are indebted to Ralph Chill for drawing our attention to this model.}
used in cardiac electrophysiology, see \cite{Bourgault2009458}. For
this let $\Omega\subseteq\mathbb{R}^{d}$ be open, bounded and connected
satisfying the segment property. The equation in question is given
by 
\[
\left(\partial_{0}\left(\begin{array}{cc}
1 & 1\\
1 & 1
\end{array}\right)+C^{*}C\right)U=F
\]
with some given data $F$ taking values in the state space 
\[
H=L^{2}\left(\Omega\right)\oplus L^{2}\left(\Omega\right)
\]
and
\[
C=\left(\begin{array}{cc}
\sqrt{\sigma_{1}} & 0\\
0 & \sqrt{\sigma_{2}}
\end{array}\right)\left(\begin{array}{cc}
\grad & 0\\
0 & \grad
\end{array}\right)
\]
with $\sigma_{k}\in L(L^{2}(\Omega,\mathbb{R}^{d}))$, $k\in\{1,2\}$,
selfadjoint and strictly positive definite with $D(C)=H^{1}(\Omega)\oplus H^{1}(\Omega)$
and $X=L^{2}(\Omega)^{d}\oplus L^{2}(\Omega)^{d}$ as well as 
\[
\eta=\left(\begin{array}{cc}
1 & 1\\
1 & 1
\end{array}\right).
\]
Note that $\grad$ (and therefore also $C$) has closed range, as
a standard contradiction argument using the compactness of the embedding
$H^{1}(\Omega)\hookrightarrow L^{2}(\Omega)$ eventually proving a
Poincare-type estimate shows; in fact we have
\begin{equation}
\left|u\right|_{L^{2}(\Omega)}\leq k\left|\grad u\right|_{L^{2}(\Omega,\mathbb{R}^{d})}\label{eq:pe}
\end{equation}
for all $u\in D(\grad)$ with $\int_{\Omega}u=0$ and some $k\geq0$.

Next, we aim at applying our abstract findings. In particular, we
need to establish the estimate in \eqref{eq:assume}. For this, let
us describe the reduced state space, $H_{0}$, first. We have 
\begin{eqnarray*}
H_{0} & = & \left(N\left(\eta\right)\cap N\left(C\right)\right)^{\perp}\\
 & = & \left\{ V\in L^{2}(\Omega)\oplus L^{2}(\Omega)\big|V=\left(\begin{array}{c}
u\\
-u
\end{array}\right)\text{ for some }u\in N(\grad)\right\} ^{\perp}\\
 & = & \left\{ V\in L^{2}(\Omega)\oplus L^{2}(\Omega)\big|V=\alpha\left(\begin{array}{c}
\chi_{\Omega}\\
-\chi_{\Omega}
\end{array}\right)\text{ for some }\alpha\in\mathbb{R}\right\} ^{\perp}\\
 & = & \left\{ (W_{1},W_{2})\in L^{2}(\Omega)\oplus L^{2}(\Omega)\big|\int_{\Omega}W_{1}(x) dx=\int_{\Omega}W_{2}(x)dx\right\} ,
\end{eqnarray*}

where in the second last equality we have used that $\Omega$ is connected
in order to have that $N(\grad)=\textcolor{black}{\lspan}\chi_{\Omega}$. According to
our abstract theory we need an estimate of the form
\[
\left|P_{R\left(\eta\right)}U\right|_{H}^{2}+\left|C_{0}U\right|_{X}^{2}\geq c_{*}\left|U\right|_{H}^{2}.
\]
holding for all 
\[
U\in D\left(C_{0}\right)\subseteq H_{0}=R\left(C^{*}\right)\oplus\left(N\left(C\right)\cap R\left(\eta\right)\right)\oplus\left(\left(N\left(C\right)\cap H_{0}\right)\ominus\left(N\left(C\right)\cap R\left(\eta\right)\right)\right)
\]
for some $c_{*}>0$ and where $P_{R(\eta)}$ denotes the projection
onto the range $R\left(\eta\right)=\overline{R\left(\eta\right)}$
of $\eta$. Take $U=U_{0}+U_{1}+U_{2}$ in the sense of this orthogonal
decomposition. First we note that 
\begin{eqnarray*}
N\left(C\right)\cap R\left(\eta\right) & = & \left\{ \alpha\left(\begin{array}{c}
\chi_{_{\Omega}}\\
\chi_{_{\Omega}}
\end{array}\right)\Big|\alpha\in\mathbb{R}\right\} 
\end{eqnarray*}
and
\begin{eqnarray*}
N\left(C\right)\cap H_{0} & = & \left\{ \alpha\left(\begin{array}{c}
\chi_{_{\Omega}}\\
\chi_{_{\Omega}}
\end{array}\right)\Big|\alpha\in\mathbb{R}\right\} ,
\end{eqnarray*}
so 
\[
U_{2}=0.
\]
Thus, we infer that
\[
H_{0}=R\left(C^{*}\right)\oplus\left(N\left(C\right)\cap R\left(\eta\right)\right).
\]
Moreover, by \eqref{eq:pe} and the assumptions on $\sigma_{k}$,
we find $c>0$ satisfying
\[
c\left|U_{0}\right|_{H}\leq\left|CU_{0}\right|_{H}\quad(U_{0}\in R(C^{*})\cap D(C)).
\]
Hence, for all $U\in D(C_{0})$ we have with $U=U_{0}+U_{1}$ for
uniquely determined $U_{0}\in R(C^{*})\cap D(C)$ and $U_{1}\in N(C)\cap R(\eta)$
that 
\begin{eqnarray*}
\left|P_{R\left(\eta\right)}U\right|_{H}^{2}+\left|C_{0}U\right|_{X}^{2} & = & \left|P_{R\left(\eta\right)}U_{0}+U_{1}\right|_{H}^{2}+\left|C_{0}U_{0}\right|_{X}^{2}\\
 & = & \left|P_{R\left(\eta\right)}U_{0}\right|_{H}^{2}+\left|U_{1}\right|_{H}^{2}+2\langle P_{R(\eta)}U_{0},U_{1}\rangle_{H}+\left|C_{0}U_{0}\right|_{X}^{2}\\
 & = & \left|P_{R\left(\eta\right)}U_{0}\right|_{H}^{2}+\left|U_{1}\right|_{H}^{2}+2\langle U_{0},U_{1}\rangle_{H}+\left|C_{0}U_{0}\right|_{X}^{2}\\
 & = & \left|P_{R\left(\eta\right)}U_{0}\right|_{H}^{2}+\left|U_{1}\right|_{H}^{2}+\left|C_{0}U_{0}\right|_{X}^{2}\\
 & \geq & \left|U_{1}\right|_{H}^{2}+c^{2}\left|U_{0}\right|_{H}^{2}.
\end{eqnarray*}
Thus, we found as desired
\[
\left|P_{R\left(\eta\right)}U\right|_{H}^{2}+\left|C_{0}U\right|_{X}^{2}\geq\min\left\{ 1,c^{2}\right\} \left|U\right|_{H}^{2}.
\]
Thus, well-posedness of the evo-system is implied by Theorem \ref{thm:Solution}.
Moreover, since $\eta\left[R\left(C^{*}\right)\right]\subseteq R\left(C^{*}\right)$
the problem can be further reduced to an evo-system in the subspace
$R\left(C^{*}\right)$ and an ordinary differential equation in $N\left(C\right)\cap R\left(C\right)$.
This insight might be useful, when dealing with problems in the light
of homogenisation, see e.g. \cite[Theorem 4.7]{Wau13} for this.\end{example}

\section{\label{sec:Application-to-a}Application to a Degenerate Evo-System
Associated with the Eddy Current Problem}

In this section, we shall now turn to our main application. Consider
the system
\begin{equation}
\begin{array}{rcl}
\sigma\mathrm{E}-\curl\mathrm{H} & = & -\mathrm{J}\\
\partial_{0}\mu\mathrm{H}+\interior\curl\mathrm{E} & = & K
\end{array}\label{eq:eddy-system}
\end{equation}
in an arbitrary non-empty open bounded set $\Omega\subseteq\mathbb{R}^{3}$
with connected boundary. We will require more regularity properties
for $\Omega,$ in the following.

After having specified the constituents of this system of two equations,
we shall reformulate the system in order to be in a position to apply
our general well-posedness theorem. This reformulation will then be
studied and related to the system \eqref{eq:eddy-system}. We shall
show that the solution for the reformulation yields a solution for
the equation, we started out with. \textcolor{black}{In view of the particular situation of the eddy current model at hand,} though this being a natural property
to ask for, it appears to have been overlooked in the literature so
far.

We specify the operators occurring in \eqref{eq:eddy-system} next.
The operator $\interior\curl$ denotes the closure of the classical
vector analytic operation $\curl$ defined on $C_{\infty}$-vector
fields with compact support in $\Omega$ considered as a mapping in
$L^{2}\left(\Omega,\mathbb{R}^{3}\right)$, that is,
\[
\interior\curl\colon D(\interior\curl)\subseteq L^{2}(\Omega,\mathbb{R}^{3})\to L^{2}(\Omega,\mathbb{R}^{3})
\]
given by
\begin{align*}
 & \phi\in D(\interior\curl),\psi=\interior\curl\phi\\
 & \iff\text{There exists a sequence }(\phi_{n})_{n}\text{ in }\interior{C}_{\infty}(\Omega,\mathbb{R}^{3})\text{ such that }\\
 & \quad\quad\quad\phi_{n}\to\phi\text{ and }\left(\begin{array}{c}
\partial_{2}\phi_{n,3}-\partial_{3}\phi_{n,2}\\
\partial_{3}\phi_{n,1}-\partial_{1}\phi_{n,3}\\
\partial_{1}\phi_{n,2}-\partial_{2}\phi_{n,1}
\end{array}\right)\to\psi\text{ in }L^{2}(\Omega,\mathbb{R}^{3})\text{ as }n\to\infty.
\end{align*}
\textcolor{black}{We emphasise that for smooth $\Omega$ belonging to the domain of $D(\interior{\curl})$ is equivalent to the (classical) vanishing of tangential component at the boundary.}
We define
\[
\curl\:\coloneqq\left(\interior\curl\right)^{*},
\]
which is the so-called weak $\curl$-derivative in $L^{2}\left(\Omega,\mathbb{R}^{3}\right)$.
The equations can now be written as a block operator matrix system
as 
\begin{equation}
\left(\partial_{0}\left(\begin{array}{cc}
0 & 0\\
0 & \mu
\end{array}\right)+\left(\begin{array}{cc}
\sigma & 0\\
0 & 0
\end{array}\right)+\left(\begin{array}{cc}
0 & -\curl\\
\interior\curl & 0
\end{array}\right)\right)\left(\begin{array}{c}
\mathrm{E}\\
\mathrm{H}
\end{array}\right)=\left(\begin{array}{c}
-\mathrm{J}\\
\mathrm{K}
\end{array}\right).\label{eq:pre-Max}
\end{equation}
\textcolor{black}{\begin{rem}
It might seem unphsyical to assume a non-zero source term $K$ on the right-hand side. In the formulation of evoluationary equations in particular concerning the reformulation of appropriate initial value problems as evoluationary equations with particular right-hand side it so happens that $K$ might be non-zero. We refer to Remark \ref{local-in-time}(b) and to \cite[Example 9.44]{STW20} for the details.
\end{rem}}
Furthermore, assume that 
\[
\mu:L^{2}\left(\Omega\right)\to L^{2}\left(\Omega\right)
\]
is selfadjoint and strictly positive definite. The assumption on $\sigma:L^{2}\left(\Omega\right)\to L^{2}\left(\Omega\right)$
is less standard. We shall assume a certain degree of degeneracy,
which is specified in the following assumption. For convenience of
the reader we denote the vector analytical operators defined on the
whole of $\Omega$ by $\curl$, $\grad$, and $\dive$ (and the respective
ones with full homogeneous boundary conditions by $\interior\curl$,
$\interior\grad,$ and $\interior\dive$). For operators defined on
other domains $\Omega_{c}$, we shall use this domain as an index
to refer to these operators such as for example $\grad_{\Omega_{c}}$
(the operator $\interior\grad_{\Omega_{c}}$ is the operator acting
as $\grad_{\Omega_{c}}$ with domain restricted to $H_{0}^{1}(\Omega_{c})$).

\begin{rem} As $\Omega$ is bounded, we have that $R(\interior\grad)$
is closed by Poincare's inequality. Moreover, $R(\interior\grad)\subseteq N(\interior\curl)$
and thus, the projection theorem gives 
\begin{align*}
N(\interior\curl) & =R(\interior\grad)\oplus\left(R(\interior\grad)^{\bot}\cap N(\interior\curl)\right)\\
 & =R(\interior\grad)\oplus\left(N(\dive)\cap N(\interior\curl)\right).
\end{align*}
The space 
\[
\mathcal{H}_{D,\Omega}\coloneqq N(\dive)\cap N(\interior\curl)
\]
is known as the space of harmonic Dirichlet fields. Since the boundary
of $\Omega$ is connected, it follows that $\mathcal{H}_{D,\Omega}=\{0\}$
by \cite[Theorem 1]{key-11} and thus, 
\begin{equation}
N(\interior\curl)=R(\interior\grad).\label{eq:N(curl)_is_grad}
\end{equation}

\end{rem}

\begin{hypo}\label{hyp:sigma} Let $\Omega_{c}\subseteq\Omega$ be
open. Moreover, assume that $\overline{\Omega}_{c}\subseteq\Omega$
and that $\Omega_{c}$ has a ($3$-dimensional) Lebesgue null set
as topological boundary and is such that $\Omega_{c}$ has finitely
many connected components and the connected components of $\Omega_{c}$
have disjoint closures. We also assume that $\Omega_{c}$ is such
that 
\begin{equation}
D(\grad_{\Omega_{c}})=\chi_{\Omega_{c}}\left[D(\interior\grad)\right].\label{eq:extension-prp}
\end{equation}
Let 
\[
\tilde{\sigma}:L^{2}\left(\Omega_{c},\mathbb{R}^{3}\right)\to L^{2}\left(\Omega_{c},\mathbb{R}^{3}\right)
\]
such that $\tilde{\sigma}$ is strictly positive definite. We shall
assume that $\sigma$ is degenerate in the sense that\footnote{In this case
\[
\chi_{_{\Omega_{c}}}:=\iota_{_{\Omega_{c}}}\iota_{_{\Omega_{c}}}^{*}
\]
is the orthogonal projector $P_{R(\sigma)}$ from $H=L^{2}\left(\Omega,\mathbb{R}^{3}\right)$
onto the closed linear subspace $R(\sigma)=\iota_{_{\Omega_{c}}}\left[L^{2}\left(\Omega_{c},\mathbb{R}^{3}\right)\right]$
(The canonical embedding $\iota_{_{\Omega_{c}}}$ of $L^{2}\left(\Omega_{c},\mathbb{R}^{3}\right)$
into $L^{2}\left(\Omega,\mathbb{R}^{3}\right)$ is via ``extension
by zero'').}
\[
\sigma=\iota_{\Omega_{c}}\tilde{\sigma}\:\iota_{\Omega_{c}}^{*}.
\]

\end{hypo}

We note here that \eqref{eq:extension-prp} indeed is a regularity
requirement for $\Omega_{c}$. In maybe more familiar terms, this
requirement equivalently reads as
\[
H^{1}(\Omega_{c})=\{\phi\in L^{2}(\Omega_{c})|\text{ there is }\tilde{\phi}\in H_{0}^{1}(\Omega)\text{ such that }\chi_{\Omega_{c}}\tilde{\phi}=\phi\}.
\]

\red{\begin{rem}
We comment some more on the condition \eqref{eq:extension-prp}. Since for every open set $\Omega_c\subseteq \mathbb{R}^3$, a function $u\in H^1_0(\Omega_c)$ if and only if 
\[
   \tilde u\coloneqq \begin{cases} u &\text{ on }\Omega_c\\
   0&\text{ else}\end{cases} \in H^1(\mathbb{R}^3),
\]the requirement \eqref{eq:extension-prp} is equivalent of $\Omega_c$ being an $H^1$-extension domain (see \cite{HKT2008} for the definition). The Calderon--Stein theorem asserts that Lipschitz domains are $H^1$-extension domains. An improvement of this result can be found in \cite{J1981}, which holds for so-called uniform domains allowing the boundary of $\Omega_c$ to have any Hausdorff dimension strictly less than $3$. A necessary criterion, however, is due to \cite[Theorem 2]{HKT2008} the \emph{measure density condition}, that is, there exists $c>0$ such that for all $x\in \Omega_c$ and $0<r\leq 1$ we have
\[
    \lambda(B(x,r)\cap \Omega_c)\geq cr^3,
\]where $\lambda(\cdot)$ denotes the Lebsgue measure in $\mathbb{R}^3$. Thus, all domains $\Omega_c$ failing this condition are no $H^1$-extension domains. Furthermore, if $\Omega_c$ has cracks of big enough Hausdorff-dimension (see e.g.~\cite{zbMATH06418162} for a two-dimensional setting), $\Omega_c$ is no $H^1$-extension domain.
\end{rem}}

We record an elementary consequence of the \textcolor{black}{assumptions} on $\sigma$.

\begin{prop}Assume Assumption \ref{hyp:sigma} to be in effect. Then
\begin{eqnarray*}
R\left(\sigma\right) & = & R\left(\chi_{_{\Omega_{c}}}\right)=L^{2}\left(\Omega_{c},\mathbb{R}^{3}\right),\\
N\left(\sigma\right) & = & R\left(1-\chi_{_{\Omega_{c}}}\right)=L^{2}\left(\Omega,\mathbb{R}^{3}\right)\ominus L^{2}\left(\Omega_{c},\mathbb{R}^{3}\right),\\
 & = & L^{2}\left(\Omega\setminus\overline{\Omega_{c}},\mathbb{R}^{3}\right),
\end{eqnarray*}
where $L^{2}\left(\Omega_{c},\mathbb{R}^{3}\right),\:L^{2}\left(\Omega\setminus\overline{\Omega_{c}},\mathbb{R}^{3}\right)$
are considered as subspaces of $L^{2}\left(\Omega,\mathbb{R}^{3}\right)$
via extension by zero.

\end{prop}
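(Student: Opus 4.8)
The plan is to read everything off directly from the factorisation $\sigma=\iota_{\Omega_c}\tilde\sigma\,\iota_{\Omega_c}^{*}$, using only that $\tilde\sigma$ is boundedly invertible on $L^{2}(\Omega_c,\mathbb{R}^{3})$ (being strictly positive definite) and that $\iota_{\Omega_c}$ is the isometric ``extension by zero'' embedding, so that $\iota_{\Omega_c}^{*}\iota_{\Omega_c}$ is the identity on $L^{2}(\Omega_c,\mathbb{R}^{3})$. First I would record that $\iota_{\Omega_c}^{*}$ is then surjective onto $L^{2}(\Omega_c,\mathbb{R}^{3})$ and that $\chi_{\Omega_c}=\iota_{\Omega_c}\iota_{\Omega_c}^{*}$ is the orthogonal projector onto $L^{2}(\Omega_c,\mathbb{R}^{3})$ regarded, via extension by zero, as a closed subspace of $L^{2}(\Omega,\mathbb{R}^{3})$ — precisely the identification announced in the footnote to Hypothesis \ref{hyp:sigma}.

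For the range identity I would argue as follows: since $\iota_{\Omega_c}^{*}$ maps onto $L^{2}(\Omega_c,\mathbb{R}^{3})$ and $\tilde\sigma$ maps this space bijectively onto itself, one gets $R(\sigma)=\iota_{\Omega_c}\bigl[L^{2}(\Omega_c,\mathbb{R}^{3})\bigr]$, which is exactly $R(\chi_{\Omega_c})=L^{2}(\Omega_c,\mathbb{R}^{3})$.

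For the kernel I would use the injectivity of $\iota_{\Omega_c}$ and of $\tilde\sigma$: the identity $\sigma f=0$ holds if and only if $\iota_{\Omega_c}^{*}f=0$, i.e.\ if and only if $\chi_{\Omega_c}f=0$, i.e.\ if and only if $f$ vanishes almost everywhere on $\Omega_c$. This is where the only non-formal ingredient enters: since the topological boundary of $\Omega_c$ is a Lebesgue null set, $\Omega$ and $\Omega_c\cup(\Omega\setminus\overline{\Omega_c})$ coincide up to a null set, whence ``$f=0$ a.e.\ on $\Omega_c$'' is the same as ``$f\in L^{2}(\Omega\setminus\overline{\Omega_c},\mathbb{R}^{3})$'' (extended by zero), and this space equals $R(1-\chi_{\Omega_c})=L^{2}(\Omega,\mathbb{R}^{3})\ominus L^{2}(\Omega_c,\mathbb{R}^{3})$. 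As a consistency check, $\sigma$ is selfadjoint with closed range $L^{2}(\Omega_c,\mathbb{R}^{3})$, so $N(\sigma)=R(\sigma)^{\perp}$ yields the same subspace.

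I do not anticipate any genuine obstacle; the statement is essentially bookkeeping around the factorisation of $\sigma$. The single point that needs a line of argument rather than pure formal manipulation is the orthogonal decomposition $L^{2}(\Omega,\mathbb{R}^{3})=L^{2}(\Omega_c,\mathbb{R}^{3})\oplus L^{2}(\Omega\setminus\overline{\Omega_c},\mathbb{R}^{3})$, which rests precisely on the hypothesis that $\partial\Omega_c$ has Lebesgue measure zero.
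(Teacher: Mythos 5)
Your proof is correct; the paper states this proposition without any proof at all (introducing it merely as ``an elementary consequence of the hypothesis on $\sigma$''), and your argument --- reading $R(\sigma)$ and $N(\sigma)$ off the factorisation $\sigma=\iota_{\Omega_{c}}\tilde{\sigma}\,\iota_{\Omega_{c}}^{*}$ via the bijectivity of $\tilde{\sigma}$ and the isometry of $\iota_{\Omega_{c}}$, and invoking the assumption that $\partial\Omega_{c}$ is a Lebesgue null set exactly once, to identify $\{f\mid f=0\text{ a.e.\ on }\Omega_{c}\}$ with $L^{2}(\Omega\setminus\overline{\Omega_{c}},\mathbb{R}^{3})$ --- is precisely the intended bookkeeping. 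Nothing is missing.
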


For the transcription of \eqref{eq:eddy-system} into a problem of
the form \eqref{eq:basic-eq}, we need to warrant the closed range
condition first. This, in turn, is a regularity requirement for $\Omega$:

\begin{hypo}\label{hyp:cr} Let $\Omega$ be such that $\interior\curl$
and consequently its adjoint $\curl$ have closed range:
\begin{equation}
R\left(\interior\curl\right),\:R\left(\curl\right)\text{ closed}.\label{eq:closed-range}
\end{equation}

\end{hypo}
\red{
\begin{rem}\label{rem:cr}
A closed range requirement is the fundamental property of linear equation
theory (see e.g.~\cite{TW14} for a corresponding result in elliptic theory) and linear operator equations with an operator having closed
range are therefore, since the beginning of last century also referred
to as \emph{normally solvable}. That for exterior domains or for $\mathbb{R}^{3}$
the differential operators (without or with associated homogeneous
boundary condition) $\grad$, $\interior\grad$, $\curl$, $\interior\curl$,
$\dive$, $\interior\dive$ have no closed range in an $L^{2}$-setting,
can be shown by approximations of the regularised fundamental solution
of the scalar or vectorial Laplacian. Note that for $\grad$, $\interior\grad$,
the closedness of the range is equivalent (this equivalence is due to the closed graph theorem, see e.g.~\cite[Remarks 3.2]{TW14}) to the Poincar\'e inequality,
which holds for $\Omega$ of bounded width, in particular for pipes
and slabs, where Rellich's selection theorem fails. For $R\left(\curl\right)$
and $R\left(\interior\curl\right)$ closedness has so far only
been obtained via a compact embedding result. Open subsets of Riemannian
manifold allowing for such a compact embedding result have been described
in \cite{Picard2001}, asking for $\Omega$ to satisfy only rather mild conditions (e.g., strictly weaker than $C^{1,1}$-domains and particularly not allowing for Gaffney's inequality to hold). We shall particularly refer to \cite{Bauer2016} for other boundary conditions.
\end{rem}
}

For later use, we shall further record the last two remaining regularity
properties needed for our well-posedness theorem to apply.

\begin{hypo}\label{hyp:rint} Assume \textcolor{black}{Assumption} \ref{hyp:sigma}
to be in effect. We shall assume that
\begin{eqnarray*}
N(\interior\curl)\cap L^{2}(\Omega\setminus\overline{\Omega}_{c},\mathbb{R}^{3}) & = & N(\interior\curl_{\Omega\setminus\overline{\Omega}_{c}})\text{ and }\\
N(\interior\curl)\cap L^{2}(\Omega_{c},\mathbb{R}^{3}) & = & N(\interior\curl_{\Omega_{c}}).
\end{eqnarray*}
Moreover, we suppose that
\[
R(\grad_{\Omega_{c}})\text{ is closed.}
\]

\end{hypo}

\begin{rem} Assumption \ref{hyp:rint} is another (boundary) regularity
property. For this to confirm, we realise that any $\phi\in D(\interior\curl_{\Omega\setminus\overline{\Omega}_{c}})$
extended by zero to the whole of $\Omega$ satisfies $\phi\in D(\interior\curl).$
Thus, in this sense, $N(\interior\curl)\cap L^{2}(\Omega\setminus\overline{\Omega}_{c},\mathbb{R}^{3})\supseteq N(\interior\curl_{\Omega\setminus\overline{\Omega}_{c}}).$
For the other inclusion the equality
\[
D(\interior\curl)\cap L^{2}(\Omega\setminus\overline{\Omega}_{c},\mathbb{R}^{3})=\{\phi\in D(\interior\curl)|\phi=0\text{ on }\overline{\Omega}_{c}\}=D(\interior\curl_{\Omega\setminus\overline{\Omega}_{c}})
\]
is sufficient. If for instance, $\Omega\setminus\overline{\Omega}_{c}$
satisfies the segment property, the desired equality holds. The second
equation and the third property in the assumptions are fulfilled, if,
for instance, $\Omega_{c}$ has the segment property. \red{We refer to Remark \ref{rem:cr} for the limitations of the closed range requirement.}

\end{rem}

We are now in the position to state the setting for the application
of Theorem \ref{thm:Solution}.

We put
\begin{align}
H=X & =L^{2}\left(\Omega,\mathbb{R}^{3}\right),\nonumber \\
C\colon D(C)\subseteq H & \to X,\nonumber \\
E & \mapsto\mu^{-1/2}\interior\curl E,\label{eq:setting}\\
D(C) & =D(\interior\curl),\nonumber \\
\eta & =\sigma.\nonumber 
\end{align}

\begin{prop}\label{prop:setass} Let $\Omega\subseteq\mathbb{R}^{3}$
be open with connected boundary. Assume Assumption \ref{hyp:sigma},
\ref{hyp:cr}, \ref{hyp:rint},  to be in effect. Then $C$ and $\eta$
as given in \eqref{eq:setting} satisfy the assumptions in Theorem
\ref{thm:Solution}.

\end{prop}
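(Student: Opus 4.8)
The plan is to verify the hypotheses of Theorem~\ref{thm:Solution} for the data in \eqref{eq:setting}, i.e.\ that $C=\mu^{-1/2}\interior\curl$ is closed and densely defined with closed range, and that the coercivity estimate \eqref{eq:assume} holds with $\eta=\sigma$. The first point is routine: since $\mu$ is bounded, selfadjoint and strictly positive definite, $\mu^{-1/2}$ is bounded, selfadjoint and boundedly invertible on $L^{2}(\Omega,\mathbb{R}^{3})$; as $\interior\curl$ is closed and densely defined by construction, $C$ inherits these properties, with $D(C)=D(\interior\curl)$, $N(C)=N(\interior\curl)$, $R(C)=\mu^{-1/2}[R(\interior\curl)]$ and $C^{*}=\curl\,\mu^{-1/2}$, so $R(C)$ and $R(C^{*})=R(\curl)$ are closed by Hypothesis~\ref{hyp:cr}, \eqref{eq:closed-range}.

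For \eqref{eq:assume} I would use the equivalent purely spatial formulation from the Remark following Proposition~\ref{prop:ptw}: it suffices to find $c_{1}>0$ with $|\sigma^{1/2}U|^{2}+|CU|^{2}\ge c_{1}|U|^{2}$ for all $U\in H_{0}\cap D(\interior\curl)$, where $H_{0}=(N(\interior\curl)\cap N(\sigma))^{\perp}$. Since $\mu$ is bounded, $|\mu^{-1/2}\interior\curl U|^{2}\gtrsim|\interior\curl U|^{2}$; and since $\tilde\sigma$ is strictly positive definite and $\sigma=\iota_{\Omega_{c}}\tilde\sigma\,\iota_{\Omega_{c}}^{*}$ (so that $N(\sigma)=L^{2}(\Omega\setminus\overline{\Omega}_{c},\mathbb{R}^{3})$ by Hypothesis~\ref{hyp:sigma}), $|\sigma^{1/2}U|^{2}\gtrsim|\chi_{\Omega_{c}}U|^{2}$. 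Thus everything reduces to the Poincar\'e--Maxwell-type estimate
\[
|\chi_{\Omega_{c}}U|^{2}+|\interior\curl U|^{2}\ge c\,|U|^{2}\qquad(U\in H_{0}\cap D(\interior\curl)).
\]

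To establish it, I would exploit the Helmholtz-type splitting afforded by the closed-range hypothesis: $L^{2}(\Omega,\mathbb{R}^{3})=R(\curl)\oplus N(\interior\curl)$ orthogonally, and $N(\interior\curl)=R(\interior\grad)$ by \eqref{eq:N(curl)_is_grad} (here the connectedness of $\partial\Omega$ enters, through the vanishing of the harmonic Dirichlet fields). Writing $U=V+W$ accordingly, one checks that $V\in R(\curl)\cap D(\interior\curl)\subseteq H_{0}$, that $W=U-V\in N(\interior\curl)\cap H_{0}$, and that $\interior\curl U=\interior\curl V$. On the first component, the reduction of $\interior\curl$ to $R(\curl)=N(\interior\curl)^{\perp}$ is injective with closed range $R(\interior\curl)$, hence boundedly invertible, giving a Friedrichs/Poincar\'e estimate $|\interior\curl V|\ge\kappa|V|$. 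The crucial point is the bound $|\chi_{\Omega_{c}}W|\ge\delta|W|$ for $W\in N(\interior\curl)\cap H_{0}$: by \eqref{eq:extension-prp} the weak gradient localises consistently with restriction, so $\chi_{\Omega_{c}}[N(\interior\curl)]=\chi_{\Omega_{c}}[R(\interior\grad)]=R(\grad_{\Omega_{c}})$, which is closed by Hypothesis~\ref{hyp:rint}; therefore the bounded map $N(\interior\curl)\ni Z\mapsto\chi_{\Omega_{c}}Z$ has closed range and kernel $N(\interior\curl)\cap N(\sigma)$, so its restriction to $N(\interior\curl)\cap H_{0}=N(\interior\curl)\ominus(N(\interior\curl)\cap N(\sigma))$ is bounded below. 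Combining the two estimates and absorbing the mixed term $\langle\chi_{\Omega_{c}}V,\chi_{\Omega_{c}}W\rangle$ by Young's inequality (weighting $|\interior\curl U|^{2}$ appropriately) then yields the displayed inequality, and with it \eqref{eq:assume}, so that Theorem~\ref{thm:Solution} applies.

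The main obstacle is this last gap estimate, i.e.\ that $\chi_{\Omega_{c}}$ is bounded below on $N(\interior\curl)\cap H_{0}$; the whole idea is to recast it as a closed-range statement for $\chi_{\Omega_{c}}$ acting on scalar potentials, which is exactly where the extension property \eqref{eq:extension-prp} of $\Omega_{c}$ and the closedness of $R(\grad_{\Omega_{c}})$ from Hypothesis~\ref{hyp:rint} are needed; the curl-kernel identities in Hypothesis~\ref{hyp:rint}, together with Lemma~\ref{lem:For-later-purposes}, supply the alternative, more explicit description of the summands of $H_{0}$ (and show, in particular, which pieces survive the reduction to $H_{0}$). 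Everything else -- closedness, density of the domain, closedness of the range of $C$, and the reduction of the coercivity estimate to the $\chi_{\Omega_{c}}$-term -- is straightforward.
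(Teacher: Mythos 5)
Your proof is correct, but the key coercivity estimate is obtained by a genuinely different (and leaner) route than the paper's. Both arguments agree on the routine part ($C$ closed, densely defined, $R(C^{*})=R(\curl)$ closed) and on reducing \eqref{eq:assume} to the spatial inequality $\left|\chi_{\Omega_{c}}U\right|^{2}+\left|\interior\curl U\right|^{2}\geq c\left|U\right|^{2}$ on $H_{0}\cap D(\interior\curl)$, as well as on handling the $R(\curl)$-component via the Friedrichs-type estimate \eqref{eq:intcu}. The difference lies in the kernel part. The paper splits $N(\interior\curl)\cap H_{0}$ further into $H_{1}\oplus H_{2}$ (Lemma \ref{lem:decom}), deals with $H_{1}\subseteq L^{2}(\Omega_{c},\mathbb{R}^{3})$ trivially, and for $H_{2}$ constructs an explicit bijection $Z\colon H_{2}\to H_{3}$ onto a space of divergence-free gradients on $\Omega_{c}$ (Lemmas \ref{lem:Hilbert_space_H3}--\ref{lem:Z_onto}, Proposition \ref{prop:key}); the surjectivity of $Z$ is the laborious step, requiring a Dirichlet-problem correction and an analysis of $\mathcal{H}_{D,\Omega\setminus\overline{\Omega_{c}}}$. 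You instead treat $N(\interior\curl)\cap H_{0}$ in one piece: the restriction map $T=\chi_{\Omega_{c}}|_{N(\interior\curl)}$ has range $\chi_{\Omega_{c}}\left[R(\interior\grad)\right]=R(\grad_{\Omega_{c}})$ (the inclusion ``$\supseteq$'' coming from \eqref{eq:extension-prp}, the inclusion ``$\subseteq$'' being automatic for weak gradients), which is closed by Hypothesis \ref{hyp:rint}, and kernel $N(\interior\curl)\cap N(\sigma)=H_{0}^{\perp}$, so $T$ is bounded below on $N(\interior\curl)\cap H_{0}$ by the open mapping theorem. This is sound and noticeably shorter; it even dispenses with the two curl-kernel localisation identities of Hypothesis \ref{hyp:rint} and with the harmonic-Dirichlet-field computations on the subdomains (you do still need $\mathcal{H}_{D,\Omega}=\{0\}$, i.e.\ the connected boundary, to write $N(\interior\curl)=R(\interior\grad)$, and the boundedness of $\Omega$ for the closedness of $R(\interior\grad)$). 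What the paper's longer route buys is the explicit orthogonal description of $H_{0}$, $H_{0}^{\perp}$, $H_{1}$, $H_{2}$ in terms of local divergence-free fields, which is reused later, e.g.\ in the saddle-point formulation of Section \ref{sec:An-Extended-System}; your argument delivers the estimate without that structural information. The final absorption of the cross term by a weighted Young inequality is the same bookkeeping as in the paper's concluding lemma.
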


The proof of Proposition \ref{prop:setass} requires a lot of preparations.
The main issue is of course to prove inequality \eqref{eq:assume}
under the current assumptions. Indeed, note that since $\mu$ is selfadjoint
and a topological isomorphism, we easily realise that $C$ is densely
defined and closed. Moreover, we obtain $C^{*}=\curl\mu^{-1/2}$ from
which we read off that
\[
R(C^{*})=R(\curl),
\]
which is assumed to be closed by Assumption \ref{hyp:cr}. Thus, we
are left with showing \eqref{eq:assume}. Before, however, doing so,
we reason, why it makes sense to look at the setting \eqref{eq:setting}
for solving \eqref{eq:eddy-system}.

\begin{rem}\label{rem:recovery} Using the assumptions of Proposition \ref{prop:setass}
and using the notation introduced in the previous section, we are
led to the evo-system
\[
\partial_{0}\eta_{0}u+C_{0}^{*}C_{0}u=-j\in H_{\rho,0}\left(\mathbb{R},H_{0}\right),
\]
with
\[
j\,\,:=\mathrm{J}-C_{0}^{*}\partial_{0}^{-1}\mu^{-1/2}\mathrm{K},
\]
where
\[
\mathrm{J}\in H_{\rho,0}\left(\mathbb{R},H_{0}\right).
\]
Hence, with
\[
\mathrm{E}\coloneqq\partial_{0}u
\]
and 
\[
\mathrm{H}\coloneqq-\partial_{0}^{-1}\mu^{-1/2}\left(C_{0}\mathrm{E}-\mu^{-1/2}\mathrm{K}\right)
\]
we recover
\begin{eqnarray*}
\sigma\mathrm{E}-C^{*}\mu^{1/2}\mathrm{H} & = & -\mathrm{J}\\
\partial_{0}\mu^{1/2}\mathrm{H}+C\mathrm{E} & = & \mu^{-1/2}\mathrm{K}
\end{eqnarray*}
or
\begin{eqnarray*}
\sigma\mathrm{E}-\curl\mathrm{H} & = & -\mathrm{J},\\
\partial_{0}\mu\mathrm{H}+\interior\curl\:\mathrm{E} & = & \mathrm{K},
\end{eqnarray*}
which is \eqref{eq:eddy-system}. Note that the argument just presented
is an incarnation of Proposition \ref{prop:just}, which in turn yields
the solvability of the system, we started out with.\end{rem}

To demonstrate \eqref{eq:assume} we first recall Lemma \ref{lem:For-later-purposes}.
In particular, we have
\begin{align}
H_{0} & =\left(N(C)\cap N(\eta)\right)^{\bot}\nonumber \\
 & =R(C^{*})\oplus H_{1}\oplus H_{2},\text{ where }\label{eq:ortho-H0}\\
H_{1} & =N(C)\cap R(\eta)\text{ and }\nonumber \\
H_{2} & =\left(N(C)\cap H_{0}\right)\ominus\left(N(C)\cap R(\eta)\right).\nonumber 
\end{align}

In the following, we describe these spaces more explicitly. Throughout,
we shall assume that the assumptions of Proposition \ref{prop:setass}
are in effect. For the formulation of \red{Lemma \ref{lem:decom}}, we define for
an open set $\mathcal{O}\subseteq\mathbb{R}^{3}$
\[
\mathcal{H}_{D,\mathcal{O}}\coloneqq N(\dive_{\mathcal{O}})\cap N(\interior\curl_{\mathcal{O}}),
\]
the space of harmonic Dirichlet fields in $\mathcal{O}$. In the following
we will use the projection theorem in different spaces. For the sake
of readability, we will use indices at the orthogonal complements
in order to clarify, in which space we take the orthogonal complement. 

\red{In order to illustrate the following findings, we recall one of the main results in \cite{key-11}, namely the computation of the dimension of the harmonic Dirichlet fields. For this let $\mathcal{O}\subseteq \mathbb{R}^3$ be open bounded with continuous boundary. We denote \[{\textsc{cc}}_{(b)}(\mathcal{O})\coloneqq \{ z \subseteq \mathbb{R}^3\setminus\overline{\mathcal{O}}; z\text{ (bounded) connected component}\}.\] For $z\in {\textsc{cc}}_{b}(\mathcal{O})$ let $\psi_z\in C_c^\infty(\mathbb{R}^3)$ such that
\[
   \psi_z (x)=\begin{cases} 1, & x\in z,\\
   0, & x \in \bigcup_{z'\in {\textsc{cc}}(\mathcal{O})\setminus \{z\}} z'.
   \end{cases}
\]Define $q_z \coloneqq \grad \psi_z$ and $\phi_z \coloneqq \pi_{\mathcal{H}_{D,\mathcal{O}}} q_z|_{\mathcal{O}}$, where  $\pi_{\mathcal{H}_{D,\mathcal{O}}}$ denotes the $L^2(\mathcal{O})^3$-orthogonal projection onto $\mathcal{H}_{D,\mathcal{O}}$.
\begin{thm}[{{{\cite[Theorem 1]{key-11}}}}]\label{thm:picardEdin} Assume $\Omega \subseteq \mathbb{R}^3$ to be bounded with continuous boundary. Let $m\in \mathbb{N}$ be the number of connected components of $\mathbb{R}^3\setminus\overline{\Omega}$. Then
\[
   \dim \mathcal{H}_{D,\Omega} = m-1.
\]
More precisely, $(\phi_z)_{z\in {\textsc{cc}}_{b}(\mathcal{O})}$ constitutes a basis for $ \mathcal{H}_{D,\Omega}$.
\end{thm}}

\red{
\begin{rem}(a) In the particular case that $\Omega$ is a ball and $\Omega_c$ is an inscribed ball, the number of connected components of $\mathbb{R}^3\setminus\big(\Omega\setminus\overline{\Omega_c}\big)$ is 2. Thus,
\[
  \dim \mathcal{H}_{D,\Omega\setminus\overline{\Omega_c}}=1.
\] It is possible to compute this function by appropriately projecting the gradient of a function, which is identically $1$ on $\Omega_{c}$ and $0$ outside $\Omega$. It is possible to compute such a solution numerically, by solving a variational problem. We refer to \cite{PaulyWaurick2020} for the details. In the situation of $\Omega_c$ being a ball, we also have that
\[
 \mathcal{H}_{D,\Omega_c}=\{0\}.
\]
(b) Note that the construction principle to obtain a basis for the space of harmonic Dirichlet fields extends to other differential operators. For this, we also refer to \cite{PaulyWaurick2020} for the details using the machinery of Hilbert complexes.
\end{rem}}

\begin{lem} \label{lem:decom}The following equalities hold:
\begin{align}
H_{0} & =\left(N\left(\dive_{\Omega\setminus\overline{\Omega_{c}}}\right)\cap\text{\ensuremath{\mathcal{H}}}_{D,\Omega\setminus\overline{\Omega_{c}}}^{\perp_{L^{2}(\Omega\setminus\overline{\Omega_{c}})}}\right)\oplus L^{2}\left(\Omega_{c},\mathbb{R}^{3}\right),\nonumber \\
H_{0}^{\bot_{L^{2}(\Omega)}} & =N(\interior\curl_{\Omega\setminus\overline{\Omega_{c}}})\nonumber \\
H_{1} & =N\left(\interior\curl_{\Omega_{c}}\right)\nonumber \\
 & =\left(N\left(\dive_{\Omega_{c}}\right)\cap\text{\ensuremath{\mathcal{H}}}_{D,\Omega_{c}}^{\perp_{L^{2}(\Omega_{c})}}\right)^{\perp_{L^{2}(\Omega_{c})}}\label{eq:ortho-H1}\\
H_{1}^{\bot_{L^{2}(\Omega)}} & =\left(N\left(\dive_{\Omega_{c}}\right)\cap\text{\ensuremath{\mathcal{H}}}_{D,\Omega_{c}}^{\perp_{L^{2}(\Omega_{c})}}\right)\oplus L^{2}\left(\Omega\setminus\overline{\Omega_{c}},\mathbb{R}^{3}\right)\nonumber \\
H_{2} & =N\left(\interior\curl\right)\cap\left(\left(N\left(\dive_{\Omega_{c}}\right)\cap\text{\ensuremath{\mathcal{H}}}_{D,\Omega_{c}}^{\perp_{L^{2}(\Omega_{c})}}\right)\oplus\left(N\left(\dive_{\Omega\setminus\overline{\Omega_{c}}}\right)\cap\text{\ensuremath{\mathcal{H}}}_{D,\Omega\setminus\overline{\Omega_{c}}}^{\perp_{L^{2}(\Omega\setminus\overline{\Omega_{c}})}}\right)\right)\nonumber 
\end{align}

\end{lem}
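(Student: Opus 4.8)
The plan is to prove each of the five displayed identities in \emph{Lemma \ref{lem:decom}} essentially one at a time, repeatedly invoking the projection theorem together with the two structural facts that hold under the standing hypotheses: the decomposition $N(\interior\curl)=R(\interior\grad)$ from \eqref{eq:N(curl)_is_grad} (available since the boundary of $\Omega$ is connected), and the localisation identities in Hypothesis \ref{hyp:rint}. I would begin with the second identity, $H_0^{\perp_{L^2(\Omega)}}=N(\interior\curl_{\Omega\setminus\overline{\Omega_c}})$: since $H_0=(N(C)\cap N(\eta))^\perp$ and $N(C)=N(\interior\curl)$, $N(\eta)=N(\sigma)=L^2(\Omega\setminus\overline{\Omega_c},\mathbb R^3)$ by the Proposition following Hypothesis \ref{hyp:sigma}, I get $H_0^\perp = N(\interior\curl)\cap L^2(\Omega\setminus\overline{\Omega_c},\mathbb R^3)$, which equals $N(\interior\curl_{\Omega\setminus\overline{\Omega_c}})$ precisely by the first assumption in Hypothesis \ref{hyp:rint}. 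The first identity for $H_0$ then follows by taking the orthogonal complement of the last display: inside $L^2(\Omega\setminus\overline{\Omega_c})$ one has $N(\interior\curl_{\Omega\setminus\overline{\Omega_c}})^{\perp}=R(\curl_{\Omega\setminus\overline{\Omega_c}})$, which one rewrites via the projection theorem applied to $R(\interior\grad_{\Omega\setminus\overline{\Omega_c}})\subseteq N(\interior\curl_{\Omega\setminus\overline{\Omega_c}})$ as $N(\dive_{\Omega\setminus\overline{\Omega_c}})\cap \mathcal H_{D,\Omega\setminus\overline{\Omega_c}}^{\perp_{L^2(\Omega\setminus\overline{\Omega_c})}}$; adjoining the $L^2(\Omega_c,\mathbb R^3)$ summand (which is automatically in $H_0$ since it sits inside $R(\eta)$) gives the claimed orthogonal decomposition of $H_0$.

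Next I would handle $H_1=N(C)\cap R(\eta)$. Here $R(\eta)=R(\sigma)=L^2(\Omega_c,\mathbb R^3)$, so $H_1=N(\interior\curl)\cap L^2(\Omega_c,\mathbb R^3)$, which equals $N(\interior\curl_{\Omega_c})$ by the \emph{second} assumption in Hypothesis \ref{hyp:rint}. The alternative description in \eqref{eq:ortho-H1} as $\bigl(N(\dive_{\Omega_c})\cap\mathcal H_{D,\Omega_c}^{\perp_{L^2(\Omega_c)}}\bigr)^{\perp_{L^2(\Omega_c)}}$ is again the projection theorem inside $L^2(\Omega_c,\mathbb R^3)$ applied to $R(\interior\grad_{\Omega_c})\subseteq N(\interior\curl_{\Omega_c})$ — one needs $R(\grad_{\Omega_c})$ closed, which is the third clause of Hypothesis \ref{hyp:rint}, to guarantee the complements behave; the complement of $N(\interior\curl_{\Omega_c})$ in $L^2(\Omega_c)$ is $R(\curl_{\Omega_c})=N(\dive_{\Omega_c})\cap\mathcal H_{D,\Omega_c}^{\perp}$, and taking one more complement recovers $H_1$. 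The formula for $H_1^{\perp_{L^2(\Omega)}}$ is then immediate: the complement of $L^2(\Omega_c,\mathbb R^3)$ in $L^2(\Omega)$ is $L^2(\Omega\setminus\overline{\Omega_c},\mathbb R^3)$, and inside $L^2(\Omega_c)$ the complement of $N(\interior\curl_{\Omega_c})=H_1$ is the space $N(\dive_{\Omega_c})\cap\mathcal H_{D,\Omega_c}^{\perp_{L^2(\Omega_c)}}$ just identified, so $H_1^{\perp_{L^2(\Omega)}}$ is the orthogonal sum of these two.

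Finally, $H_2=(N(C)\cap H_0)\ominus(N(C)\cap R(\eta))$. I would first identify $N(C)\cap H_0=N(\interior\curl)\cap H_0$: using $H_0^{\perp}=N(\interior\curl_{\Omega\setminus\overline{\Omega_c}})\subseteq N(\interior\curl)$, the projection theorem gives $N(\interior\curl)=(N(\interior\curl)\cap H_0)\oplus N(\interior\curl_{\Omega\setminus\overline{\Omega_c}})$, so $N(\interior\curl)\cap H_0 = N(\interior\curl)\cap N(\interior\curl_{\Omega\setminus\overline{\Omega_c}})^{\perp_{L^2(\Omega)}}$; and $N(\interior\curl_{\Omega\setminus\overline{\Omega_c}})^{\perp_{L^2(\Omega)}}$ is $L^2(\Omega_c,\mathbb R^3)\oplus\bigl(N(\dive_{\Omega\setminus\overline{\Omega_c}})\cap\mathcal H_{D,\Omega\setminus\overline{\Omega_c}}^{\perp_{L^2(\Omega\setminus\overline{\Omega_c})}}\bigr)$ by the computation already done for $H_0$. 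Since $H_1=N(C)\cap R(\eta)=N(\interior\curl)\cap L^2(\Omega_c,\mathbb R^3)$ is exactly the part of $N(C)\cap H_0$ living in the $L^2(\Omega_c)$-slot, subtracting it off leaves the part supported on $\Omega\setminus\overline{\Omega_c}$ together with the $L^2(\Omega_c)$-component orthogonal to $N(\interior\curl_{\Omega_c})$, i.e. $H_2 = N(\interior\curl)\cap\Bigl(\bigl(N(\dive_{\Omega_c})\cap\mathcal H_{D,\Omega_c}^{\perp_{L^2(\Omega_c)}}\bigr)\oplus\bigl(N(\dive_{\Omega\setminus\overline{\Omega_c}})\cap\mathcal H_{D,\Omega\setminus\overline{\Omega_c}}^{\perp_{L^2(\Omega\setminus\overline{\Omega_c})}}\bigr)\Bigr)$, as claimed.

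The main obstacle I anticipate is \textbf{bookkeeping the ambient space of each orthogonal complement} — the lemma deliberately decorates the $\perp$'s with subscripts because a subspace of $L^2(\Omega_c)$ has one complement inside $L^2(\Omega_c)$ and a quite different one inside $L^2(\Omega)$, and the proof hinges on sliding correctly between these. The genuinely non-formal inputs are (i) $\mathcal H_{D,\Omega}=\{0\}$, already recorded, which is what collapses $N(\interior\curl)$ to $R(\interior\grad)$ globally (note it is \emph{not} assumed that $\mathcal H_{D,\Omega_c}$ or $\mathcal H_{D,\Omega\setminus\overline{\Omega_c}}$ vanish, which is why those Dirichlet-field spaces survive in the statement), and (ii) the three clauses of Hypothesis \ref{hyp:rint}, which is precisely the mechanism allowing $\interior\curl$ restricted to $L^2(\Omega_c)$ or $L^2(\Omega\setminus\overline{\Omega_c})$ to be identified with the intrinsic $\interior\curl$ on those subdomains. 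Everything else is the projection theorem applied to the chain $R(\interior\grad)\subseteq N(\interior\curl)$ on each of the three domains $\Omega$, $\Omega_c$, $\Omega\setminus\overline{\Omega_c}$, using closedness of the relevant ranges (from Poincaré on $\Omega$, from the last clause of Hypothesis \ref{hyp:rint} on $\Omega_c$, and analogously on $\Omega\setminus\overline{\Omega_c}$).
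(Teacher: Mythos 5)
Your argument is correct and matches the paper's own proof essentially step for step: you compute $H_0^{\perp_{L^2(\Omega)}}$ and $H_1$ directly from Hypotheses \ref{hyp:sigma} and \ref{hyp:rint}, obtain the formulas for $H_0$, \eqref{eq:ortho-H1} and $H_1^{\perp_{L^2(\Omega)}}$ by orthogonally complementing the decomposition $N(\interior\curl_{\mathcal{O}})=R(\interior\grad_{\mathcal{O}})\oplus\mathcal{H}_{D,\mathcal{O}}$ on the two subdomains, and realise $H_2$ as $\left(N(C)\cap H_0\right)\cap H_1^{\perp_{L^2(\Omega)}}$, exactly as the paper does. The only (harmless) imprecision is writing $N(\interior\curl_{\mathcal{O}})^{\perp}$ as $R(\curl_{\mathcal{O}})$ rather than its closure, but since you immediately re-express that complement as $N(\dive_{\mathcal{O}})\cap\mathcal{H}_{D,\mathcal{O}}^{\perp}$ via the projection theorem, nothing is lost.
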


\begin{proof}Using Assumptions \ref{hyp:rint} and \ref{hyp:sigma},
we obtain
\begin{align*}
H_{0}^{\bot_{L^{2}(\Omega)}} & =N(C)\cap N(\eta)\\
 & =N(\interior\curl)\cap L^{2}(\Omega\setminus\overline{\Omega}_{c},\mathbb{R}^{3})\\
 & =N(\interior\curl_{\Omega\setminus\overline{\Omega}_{c}}).
\end{align*}
Since $\interior\grad_{\Omega\setminus\overline{\Omega}_{c}}^{*}=-\dive_{\Omega\setminus\overline{\Omega}_{c}}$
with adjoint computed in $L^{2}(\Omega\setminus\overline{\Omega}_{c},\mathbb{R}^{3})$
and $R(\interior\grad_{\Omega\setminus\overline{\Omega}_{c}})\subseteq N(\interior\curl_{\Omega\setminus\overline{\Omega}_{c}})$,
we thus obtain
\begin{eqnarray*}
H_{0} & = & N(\interior\curl_{\Omega\setminus\overline{\Omega_{c}}})^{\bot_{L^{2}(\Omega)}}\\
 & = & N(\interior\curl_{\Omega\setminus\overline{\Omega_{c}}})^{\bot_{L^{2}(\Omega\setminus\overline{\Omega_{c}})}}\oplus L^{2}(\Omega_{c},\mathbb{R}^{3})\\
 & = & \left(R(\interior\grad_{\Omega\setminus\overline{\Omega}_{c}})\oplus\mathcal{H}_{D,\Omega\setminus\overline{\Omega}_{c}}\right)^{\bot_{L^{2}(\Omega\setminus\overline{\Omega_{c}})}}\oplus L^{2}(\Omega_{c},\mathbb{R}^{3})\\
 & = & \left(N(\dive_{\Omega\setminus\overline{\Omega}_{c}})\cap\mathcal{H}_{D,\Omega\setminus\overline{\Omega}_{c}}^{\bot_{L^{2}(\Omega\setminus\overline{\Omega_{c}})}}\right)\oplus L^{2}(\Omega_{c},\mathbb{R}^{3}).
\end{eqnarray*}
Next, we have by Assumption \ref{hyp:rint}
\begin{eqnarray*}
H_{1} & = & N(C)\cap R(\eta)\\
 & = & N(\interior\curl)\cap L^{2}(\Omega_{c},\mathbb{R}^{3})\\
 & = & N(\interior\curl_{\Omega_{c}}).
\end{eqnarray*}
An analogous argument as already done for $H_{0}$ implies the asserted
equation for $H_{1}^{\bot_{L^{2}(\Omega)}}$, which in turn implies
the second expression for $H_{1}$. Finally, from $R(C^{*})=R(\curl)$
and the already derived expression for $H_{0}$, we deduce 
\[
N(C)\cap H_{0}=N\left(\interior\curl\right)\cap\left(\left(N\left(\dive_{\Omega\setminus\overline{\Omega_{c}}}\right)\cap\text{\ensuremath{\mathcal{H}}}_{D,\Omega\setminus\overline{\Omega_{c}}}^{\perp_{L^{2}(\Omega\setminus\overline{\Omega_{c}})}}\right)\oplus L^{2}\left(\Omega_{c},\mathbb{R}^{3}\right)\right)
\]
 and therefore
\begin{eqnarray*}
H_{2} & = & \left(N\left(C\right)\cap H_{0}\right)\ominus\left(N\left(C\right)\cap R\left(\eta\right)\right)\\
 & = & \left(N\left(C\right)\cap H_{0}\right)\cap H_{1}^{\bot_{L^{2}(\Omega)}},\\
 & = & N\left(\interior\curl\right)\cap\left(\left(N\left(\dive_{\Omega\setminus\overline{\Omega_{c}}}\right)\cap\text{\ensuremath{\mathcal{H}}}_{D,\Omega\setminus\overline{\Omega_{c}}}^{\perp_{L^{2}(\Omega\setminus\overline{\Omega_{c}})}}\right)\oplus L^{2}\left(\Omega_{c},\mathbb{R}^{3}\right)\right)\cap\\
 &  & \cap\left(\left(N\left(\dive_{\Omega_{c}}\right)\cap\text{\ensuremath{\mathcal{H}}}_{D,\Omega_{c}}^{\perp_{L^{2}(\Omega_{c})}}\right)\oplus L^{2}\left(\Omega\setminus\overline{\Omega_{c}},\mathbb{R}^{3}\right)\right),\\
 & = & N\left(\interior\curl\right)\cap\left(\left(N\left(\dive_{\Omega_{c}}\right)\cap\text{\ensuremath{\mathcal{H}}}_{D,\Omega_{c}}^{\perp_{L^{2}(\Omega_{c})}}\right)\oplus\left(N\left(\dive_{\Omega\setminus\overline{\Omega_{c}}}\right)\cap\text{\ensuremath{\mathcal{H}}}_{D,\Omega\setminus\overline{\Omega_{c}}}^{\perp_{L^{2}(\Omega\setminus\overline{\Omega_{c}})}}\right)\right).
\end{eqnarray*}
\end{proof}

A next step towards the desired inequality is provided next.

\begin{lem}\label{lem:For-later-purposes-1}We have for $U_{k}\in H_{k}$,
$k\in\{1,2\}$, 
\[
\left|\chi_{_{\Omega_{c}}}\left(U_{1}+U_{2}\right)\right|^{2}=\left|U_{1}\right|^{2}+\left|\chi_{_{\Omega_{c}}}U_{2}\right|^{2}.
\]
\end{lem}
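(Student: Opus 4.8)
The plan is to expand the square on the left-hand side and to reduce the claim to two facts that are already at our disposal: elements of $H_1$ are supported in $\Omega_c$, and $H_1$ and $H_2$ are orthogonal subspaces of $L^2(\Omega,\mathbb{R}^3)$ by construction.

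First I would invoke Lemma \ref{lem:decom}, according to which $H_1 = N(\interior\curl_{\Omega_c})$; under the standing identification of $L^2(\Omega_c,\mathbb{R}^3)$ with a subspace of $L^2(\Omega,\mathbb{R}^3)$ via extension by zero, this means that every $U_1 \in H_1$ lies in $L^2(\Omega_c,\mathbb{R}^3)$, i.e.\ in the range of $\chi_{\Omega_c}$, the orthogonal projector onto that subspace. In particular $\chi_{\Omega_c} U_1 = U_1$, so that $\chi_{\Omega_c}(U_1 + U_2) = U_1 + \chi_{\Omega_c} U_2$, and hence
\[
\left|\chi_{\Omega_c}(U_1+U_2)\right|^2 = |U_1|^2 + 2\langle U_1, \chi_{\Omega_c} U_2\rangle + \left|\chi_{\Omega_c} U_2\right|^2 .
\]

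It then remains to see that the cross term vanishes. Since $\chi_{\Omega_c}$ is selfadjoint and $\chi_{\Omega_c}U_1 = U_1$, we have $\langle U_1, \chi_{\Omega_c} U_2\rangle = \langle \chi_{\Omega_c}U_1, U_2\rangle = \langle U_1, U_2\rangle$; and because $H_2 = (N(C)\cap H_0)\ominus(N(C)\cap R(\eta)) = (N(C)\cap H_0)\ominus H_1$ is by definition the orthogonal complement of $H_1$ inside $N(C)\cap H_0$, we obtain $\langle U_1, U_2\rangle = 0$. This yields the asserted identity. I do not expect any genuine obstacle here; the only point that needs a modicum of care is the bookkeeping surrounding the identification of $L^2(\Omega_c,\mathbb{R}^3)$ with a subspace of $L^2(\Omega,\mathbb{R}^3)$ — already fixed in Hypothesis \ref{hyp:sigma} — which is precisely what makes $\chi_{\Omega_c}$ act as the identity on $H_1$.
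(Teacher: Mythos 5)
Your proof is correct, and it follows the same overall skeleton as the paper's: both expand the square and use that $U_{1}\in H_{1}=N(\interior\curl_{\Omega_{c}})\subseteq L^{2}(\Omega_{c},\mathbb{R}^{3})$, so that $\chi_{_{\Omega_{c}}}U_{1}=U_{1}$. The one place where you genuinely diverge is the vanishing of the cross term. The paper argues \emph{inside} $L^{2}(\Omega_{c},\mathbb{R}^{3})$: by Lemma \ref{lem:decom} one has $\chi_{_{\Omega_{c}}}U_{2}\in N(\dive_{\Omega_{c}})\cap\mathcal{H}_{D,\Omega_{c}}^{\perp_{L^{2}(\Omega_{c})}}$, while by \eqref{eq:ortho-H1} the space $H_{1}$ is exactly the orthogonal complement of $N(\dive_{\Omega_{c}})\cap\mathcal{H}_{D,\Omega_{c}}^{\perp_{L^{2}(\Omega_{c})}}$ in $L^{2}(\Omega_{c},\mathbb{R}^{3})$; hence $\left\langle U_{1}|\chi_{_{\Omega_{c}}}U_{2}\right\rangle =0$. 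You instead argue \emph{globally}: since $\chi_{_{\Omega_{c}}}$ is a selfadjoint projector fixing $U_{1}$, you throw it onto the other factor and reduce to $\left\langle U_{1}|U_{2}\right\rangle =0$, which holds because $H_{1}\perp H_{2}$ is built into the decomposition \eqref{eq:ortho-H0}. Your route is slightly more economical: it uses from Lemma \ref{lem:decom} only the identification $H_{1}=N(\interior\curl_{\Omega_{c}})$ (to get $\chi_{_{\Omega_{c}}}U_{1}=U_{1}$), and in particular it does not need the Helmholtz-type description of $\chi_{_{\Omega_{c}}}[H_{2}]$ nor the second expression for $H_{1}$ in \eqref{eq:ortho-H1}. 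The paper's version, on the other hand, records the sharper structural fact that $U_{1}$ and $\chi_{_{\Omega_{c}}}U_{2}$ are already orthogonal as elements of $L^{2}(\Omega_{c},\mathbb{R}^{3})$, which is the information actually reused in the positive-definiteness estimate later on. Both arguments are complete; as a minor aside, the displayed expansion in the paper's proof is missing the factor $2$ on the cross term, which you correctly include (and which is immaterial since that term vanishes).
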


\begin{proof}By Lemma \ref{lem:decom}, we obtain that
\[
\chi_{_{\Omega_{c}}}U_{2}\in N\left(\dive_{\Omega_{c}}\right)\cap\text{\ensuremath{\mathcal{H}}}_{D,\Omega_{c}}^{\perp_{L^{2}(\Omega_{c})}}
\]
Hence, with \eqref{eq:ortho-H1} we deduce
\begin{eqnarray*}
\left|\chi_{_{\Omega_{c}}}\left(U_{1}+U_{2}\right)\right|^{2} & = & \left|\chi_{_{\Omega_{c}}}U_{1}\right|^{2}+\left\langle \chi_{_{\Omega_{c}}}U_{1}|\chi_{_{\Omega_{c}}}U_{2}\right\rangle +\left|\chi_{_{\Omega_{c}}}U_{2}\right|^{2}\\
 & = & \left|U_{1}\right|^{2}+\left\langle U_{1}|\chi_{_{\Omega_{c}}}U_{2}\right\rangle +\left|\chi_{_{\Omega_{c}}}U_{2}\right|^{2}\\
 & = & \left|U_{1}\right|^{2}+\left|\chi_{_{\Omega_{c}}}U_{2}\right|^{2}.
\end{eqnarray*}
\end{proof}

By Assumption \ref{hyp:cr}, we deduce with an application of the
closed graph theorem, that there exists $k_{0}\geq0$ such that
\begin{equation}
\left|U\right|\leq k_{0}\left|\interior\curl U\right|\label{eq:intcu}
\end{equation}
for all $U\in D\left(\interior\curl\right)\cap R\left(C^{*}\right)$.
Finally we need a more subtle result, which is the key step towards
showing the desired inequality \eqref{eq:assume} in the present context.

\begin{prop}\label{prop:key}There exists $k_{1}\geq0$ so that 
\begin{equation}
\left|U\right|\leq k_{1}\left|\chi_{_{\Omega_{c}}}U\right|\label{eq:H2-estimate}
\end{equation}
for all $U\in H_{2}$ \end{prop}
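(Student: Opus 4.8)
The plan is to argue by contradiction and compactness, exploiting that every member of $H_2$ is the gradient of an $H_0^1(\Omega)$-function whose restriction to $\Omega\setminus\overline{\Omega_c}$ is weakly harmonic, so that the portion of the field supported off $\Omega_c$ is energy-minimising for its trace and hence dominated by the portion on $\Omega_c$.

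First some preparations. Since $H_2\subseteq N(C)=N(\interior\curl)$ and, the boundary of $\Omega$ being connected, $N(\interior\curl)=R(\interior\grad)$ by \eqref{eq:N(curl)_is_grad}, every $U\in H_2$ can be written $U=\interior\grad p$ with a unique $p\in D(\interior\grad)=H_0^1(\Omega)$, and Poincaré's inequality on the bounded set $\Omega$ gives $|p|\leq c\,|U|$. By \eqref{eq:extension-prp}, $\chi_{\Omega_c}p\in D(\grad_{\Omega_c})$ with $\grad_{\Omega_c}\chi_{\Omega_c}p=\chi_{\Omega_c}U$, and trivially $\chi_{\Omega\setminus\overline{\Omega_c}}p\in D(\grad_{\Omega\setminus\overline{\Omega_c}})$ with $\grad_{\Omega\setminus\overline{\Omega_c}}\chi_{\Omega\setminus\overline{\Omega_c}}p=\chi_{\Omega\setminus\overline{\Omega_c}}U$; moreover, by Lemma \ref{lem:decom}, $\chi_{\Omega\setminus\overline{\Omega_c}}U\in N(\dive_{\Omega\setminus\overline{\Omega_c}})=R(\interior\grad_{\Omega\setminus\overline{\Omega_c}})^{\perp_{L^2(\Omega\setminus\overline{\Omega_c})}}$, which is exactly weak harmonicity of $\chi_{\Omega\setminus\overline{\Omega_c}}p$ in $\Omega\setminus\overline{\Omega_c}$. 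I also record that $H_2\ni U\mapsto\chi_{\Omega_c}U$ is injective: if $\chi_{\Omega_c}U=0$ then (as $\partial\Omega_c$ is a null set) $U\in N(\interior\curl)\cap L^2(\Omega\setminus\overline{\Omega_c},\mathbb{R}^3)=N(\interior\curl_{\Omega\setminus\overline{\Omega_c}})$ by Hypothesis \ref{hyp:rint}, and this space equals $H_0^{\perp_{L^2(\Omega)}}$ by Lemma \ref{lem:decom}; since $H_2\subseteq H_0$ by \eqref{eq:ortho-H0}, this forces $U=0$. Note that $H_2$ is infinite-dimensional, so one cannot simply invoke equivalence of norms.

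Now suppose \eqref{eq:H2-estimate} fails: choose $U_n\in H_2$ with $|U_n|=1$ and $|\chi_{\Omega_c}U_n|\to0$, and write $U_n=\interior\grad p_n$ with $|p_n|\leq c$. By the Rellich embedding $H_0^1(\Omega)\hookrightarrow\hookrightarrow L^2(\Omega)$ (valid for any bounded $\Omega$ without regularity of the boundary) we may pass to a subsequence with $p_n\rightharpoonup p$ in $H_0^1(\Omega)$ and $p_n\to p$ in $L^2(\Omega)$; then $U_n=\interior\grad p_n\rightharpoonup U:=\interior\grad p$. Since $\chi_{\Omega_c}U_n\to0$ strongly, $\chi_{\Omega_c}U=0$, and as $H_2$ is closed, hence weakly closed, $U\in H_2$; by the injectivity above $U=0$, hence $p=0$ and so $p_n\to0$ in $L^2(\Omega)$. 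To reach a contradiction I use the bounded linear extension operator $E\colon D(\grad_{\Omega_c})\to D(\interior\grad)$, $\chi_{\Omega_c}Ef=f$, furnished by the open mapping theorem from the surjectivity \eqref{eq:extension-prp} of $\chi_{\Omega_c}\colon D(\interior\grad)\to D(\grad_{\Omega_c})$. Set $q_n:=E(\chi_{\Omega_c}p_n)$, so that
\[
|q_n|_{D(\interior\grad)}\leq\|E\|\bigl(|\chi_{\Omega_c}p_n|^2+|\chi_{\Omega_c}U_n|^2\bigr)^{1/2}\longrightarrow0,
\]
because $|\chi_{\Omega_c}p_n|\leq|p_n|\to0$. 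The function $w_n:=p_n-q_n\in H_0^1(\Omega)$ vanishes a.e. on $\Omega_c$, and the crucial point is that $\chi_{\Omega\setminus\overline{\Omega_c}}w_n\in D(\interior\grad_{\Omega\setminus\overline{\Omega_c}})$; granting this and testing the weak harmonicity of $\chi_{\Omega\setminus\overline{\Omega_c}}p_n$ against $\chi_{\Omega\setminus\overline{\Omega_c}}w_n$ in $L^2(\Omega\setminus\overline{\Omega_c},\mathbb{R}^3)$,
\[
0=\langle\chi_{\Omega\setminus\overline{\Omega_c}}U_n|\interior\grad_{\Omega\setminus\overline{\Omega_c}}\chi_{\Omega\setminus\overline{\Omega_c}}w_n\rangle=|\chi_{\Omega\setminus\overline{\Omega_c}}U_n|^2-\langle\chi_{\Omega\setminus\overline{\Omega_c}}U_n|\chi_{\Omega\setminus\overline{\Omega_c}}\interior\grad q_n\rangle,
\]
so $|\chi_{\Omega\setminus\overline{\Omega_c}}U_n|\leq|\interior\grad q_n|\leq|q_n|_{D(\interior\grad)}\to0$. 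Together with $|\chi_{\Omega_c}U_n|\to0$ and $|U_n|^2=|\chi_{\Omega_c}U_n|^2+|\chi_{\Omega\setminus\overline{\Omega_c}}U_n|^2$ we get $|U_n|\to0$, contradicting $|U_n|=1$; hence \eqref{eq:H2-estimate} holds, with $k_1$ the norm of the (then bounded) inverse of $U\mapsto\chi_{\Omega_c}U$ on $H_2$.

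The step I expect to be the main obstacle is the claim $\chi_{\Omega\setminus\overline{\Omega_c}}w_n\in D(\interior\grad_{\Omega\setminus\overline{\Omega_c}})$ for $w_n\in H_0^1(\Omega)$ vanishing a.e. on $\Omega_c$ — equivalently $D(\interior\grad)\cap L^2(\Omega\setminus\overline{\Omega_c})=D(\interior\grad_{\Omega\setminus\overline{\Omega_c}})$ — which is the $\grad$-analogue of the $\interior\curl$-identity discussed after Hypothesis \ref{hyp:rint} and is precisely where the interface regularity ($\overline{\Omega_c}\subseteq\Omega$, $\partial\Omega_c$ a Lebesgue null set, and the segment-type properties underlying Hypothesis \ref{hyp:rint}) enters. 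Everything else — Rellich, the open mapping theorem, weak closedness of $H_2$, and the bookkeeping with Lemma \ref{lem:decom} — is routine. One could alternatively replace this step by a trace argument: the trace of $\chi_{\Omega_c}p_n$ on $\partial\Omega_c$ is small in $H^{1/2}$ and the harmonic extension into $\Omega\setminus\overline{\Omega_c}$ with zero data on $\partial\Omega$ depends continuously on it, yielding the same decay of $|\chi_{\Omega\setminus\overline{\Omega_c}}U_n|$, at the cost of Lipschitz-type regularity of $\Omega_c$ and of $\Omega\setminus\overline{\Omega_c}$.
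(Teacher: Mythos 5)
Your overall strategy is sound and genuinely different from the paper's: you prove the estimate directly by a contradiction--compactness argument, whereas the paper constructs an explicit bijection $Z\colon H_{2}\to H_{3}$, $U\mapsto\chi_{\Omega_{c}}U$, onto an auxiliary closed space $H_{3}$ of gradient fields (Lemmas \ref{lem:Hilbert_space_H3}--\ref{lem:Z_onto}) and then invokes the closed graph theorem; the hard part there is the surjectivity Lemma \ref{lem:Z_onto}, which your route avoids entirely. However, the step you yourself flag is a genuine gap \emph{under the hypotheses actually in force for Proposition \ref{prop:key}}. The claim that $w_{n}\in D(\interior\grad)$ with $w_{n}=0$ a.e.\ on $\Omega_{c}$ implies $\chi_{\Omega\setminus\overline{\Omega_{c}}}w_{n}\in D(\interior\grad_{\Omega\setminus\overline{\Omega_{c}}})$ is exactly the second condition of Hypothesis \ref{hyp:rome}, which the paper only imposes from Section \ref{sec:An-Extended-System} onward; Proposition \ref{prop:key} is proved assuming only Hypotheses \ref{hyp:sigma}, \ref{hyp:cr}, \ref{hyp:rint}, and for irregular $\partial\Omega_{c}$ (e.g.\ an inward ``crack'' of positive capacity and zero measure) the implication can fail. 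Hypothesis \ref{hyp:rint} is a statement about kernels of $\interior\curl$, not about domains of $\interior\grad$, so it does not deliver your claim as stated.

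The good news is that your argument can be repaired without strengthening the hypotheses, by working with the gradient field rather than the scalar potential. Since $w_{n}=0$ a.e.\ on the open set $\Omega_{c}$, also $\interior\grad w_{n}=0$ a.e.\ on $\Omega_{c}$, so $\interior\grad w_{n}\in N(\interior\curl)\cap L^{2}(\Omega\setminus\overline{\Omega_{c}},\mathbb{R}^{3})=N(\interior\curl_{\Omega\setminus\overline{\Omega_{c}}})$ by Hypothesis \ref{hyp:rint}. On the other hand, by Lemma \ref{lem:decom} one has $\chi_{\Omega\setminus\overline{\Omega_{c}}}U_{n}\in N(\dive_{\Omega\setminus\overline{\Omega_{c}}})\cap\mathcal{H}_{D,\Omega\setminus\overline{\Omega_{c}}}^{\perp_{L^{2}(\Omega\setminus\overline{\Omega_{c}})}}=\left(R(\interior\grad_{\Omega\setminus\overline{\Omega_{c}}})\oplus\mathcal{H}_{D,\Omega\setminus\overline{\Omega_{c}}}\right)^{\perp_{L^{2}(\Omega\setminus\overline{\Omega_{c}})}}=N(\interior\curl_{\Omega\setminus\overline{\Omega_{c}}})^{\perp_{L^{2}(\Omega\setminus\overline{\Omega_{c}})}}$, using the decomposition of the kernel of $\interior\curl$ on the bounded set $\Omega\setminus\overline{\Omega_{c}}$ as in the remark preceding Hypothesis \ref{hyp:sigma}. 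Hence
\begin{equation*}
0=\left\langle \chi_{\Omega\setminus\overline{\Omega_{c}}}U_{n}\,\big|\,\interior\grad w_{n}\right\rangle =\left|\chi_{\Omega\setminus\overline{\Omega_{c}}}U_{n}\right|^{2}-\left\langle \chi_{\Omega\setminus\overline{\Omega_{c}}}U_{n}\,\big|\,\chi_{\Omega\setminus\overline{\Omega_{c}}}\interior\grad q_{n}\right\rangle ,
\end{equation*}
which gives $\left|\chi_{\Omega\setminus\overline{\Omega_{c}}}U_{n}\right|\leq\left|\interior\grad q_{n}\right|\to0$, exactly as you intended, and the rest of your argument (Rellich, weak closedness of $H_{2}$, injectivity of $U\mapsto\chi_{\Omega_{c}}U$, the bounded extension operator from \eqref{eq:extension-prp}) goes through as written. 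With this replacement your proof is correct and arguably shorter than the paper's, at the price of being non-constructive: the paper's surjectivity of $Z$ gives extra structural information about $H_{2}$ that your compactness argument does not.
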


In order to prove this proposition we need some preparations. We start
with the following observation.

\begin{lem}\label{lem:Hilbert_space_H3} Define 
\[
H_{3}\coloneqq\grad_{\Omega_{c}}\left[N(\dive_{\Omega_{c}}\grad_{\Omega_{c}})\cap\{\phi\,|\,\grad_{\Omega_{c}}\phi\in\mathcal{H}_{D,\Omega_{c}}^{\bot_{L^{2}(\Omega_{c})}}\}\right]\subseteq L^{2}(\Omega_{c})^{3}.
\]
Then $H_{3}$ is a closed subspace of $L^{2}(\Omega_{c})^{3}$ and
for $U\in H_{2}$ we have that $\chi_{\Omega_{c}}U\in H_{3}.$

\end{lem}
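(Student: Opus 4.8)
The statement to prove is Lemma~\ref{lem:Hilbert_space_H3}: the space $H_{3}$ is closed in $L^{2}(\Omega_{c})^{3}$, and $\chi_{\Omega_{c}}U\in H_{3}$ whenever $U\in H_{2}$. My plan is to split the proof into these two assertions, treating the membership statement first since it drives the right definition of $H_{3}$, and then addressing closedness via a closed-range argument for $\grad_{\Omega_{c}}$ restricted to an appropriate subspace.

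\emph{Step 1 (membership $\chi_{\Omega_{c}}U\in H_{3}$).} Take $U\in H_{2}$. By Lemma~\ref{lem:decom}, $H_{2}\subseteq N(\interior\curl)$ and $U$ lies in the direct sum $\bigl(N(\dive_{\Omega_{c}})\cap\mathcal{H}_{D,\Omega_{c}}^{\perp_{L^{2}(\Omega_{c})}}\bigr)\oplus\bigl(N(\dive_{\Omega\setminus\overline{\Omega_{c}}})\cap\mathcal{H}_{D,\Omega\setminus\overline{\Omega_{c}}}^{\perp_{L^{2}(\Omega\setminus\overline{\Omega_{c}})}}\bigr)$. The key point is that the $\Omega_{c}$-component $\chi_{\Omega_{c}}U$ is then $\interior\curl_{\Omega_{c}}$-free (this is exactly where Hypothesis~\ref{hyp:rint}, $N(\interior\curl)\cap L^{2}(\Omega_{c},\mathbb{R}^{3})=N(\interior\curl_{\Omega_{c}})$, enters — applied to $\chi_{\Omega_{c}}U$, which lies in $N(\interior\curl)$ because $U\in N(\interior\curl)$ and the $\Omega\setminus\overline{\Omega_c}$-part of $U$ lies in $N(\interior\curl_{\Omega\setminus\overline{\Omega_c}})\subseteq N(\interior\curl)$ after extension by zero, by the first part of Hypothesis~\ref{hyp:rint}). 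Now use the Helmholtz-type decomposition on $\Omega_{c}$: since $R(\grad_{\Omega_c})$ is closed (Hypothesis~\ref{hyp:rint}) and $R(\interior\grad_{\Omega_c})\subseteq\ldots$, one has $N(\interior\curl_{\Omega_{c}})=R(\grad_{\Omega_c})\oplus\mathcal{H}_{N,\Omega_{c}}$ or the appropriate variant; but since $\chi_{\Omega_c}U$ additionally lies in $\mathcal{H}_{D,\Omega_c}^{\perp}$ (from $H_2$'s defining decomposition) and in $N(\dive_{\Omega_c})$, one writes $\chi_{\Omega_c}U=\grad_{\Omega_c}\phi$ for some potential $\phi$, and the conditions $\chi_{\Omega_c}U\in N(\dive_{\Omega_c})$, $\grad_{\Omega_c}\phi\in\mathcal{H}_{D,\Omega_c}^{\perp}$ are precisely what is built into the definition of $H_{3}$. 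This shows $\chi_{\Omega_c}U\in H_{3}$.

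\emph{Step 2 (closedness of $H_{3}$).} Write $H_{3}=\grad_{\Omega_c}[V]$ with $V\coloneqq N(\dive_{\Omega_c}\grad_{\Omega_c})\cap\{\phi\mid\grad_{\Omega_c}\phi\in\mathcal{H}_{D,\Omega_c}^{\perp_{L^2(\Omega_c)}}\}$. The strategy is to show $\grad_{\Omega_c}$ restricted to $V$ has closed range. Take a sequence $\phi_n\in V$ with $\grad_{\Omega_c}\phi_n\to w$ in $L^2(\Omega_c)^3$; I must produce $\phi\in V$ with $\grad_{\Omega_c}\phi=w$. First, $\mathcal{H}_{D,\Omega_c}^{\perp}$ is closed, so $w\in\mathcal{H}_{D,\Omega_c}^{\perp}$. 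Next, $w\in R(\grad_{\Omega_c})$ because $R(\grad_{\Omega_c})$ is closed by Hypothesis~\ref{hyp:rint}, so $w=\grad_{\Omega_c}\phi$ for some $\phi$; we may normalise $\phi$ to be orthogonal to $N(\grad_{\Omega_c})$ (the locally constant functions — finitely many, by Hypothesis~\ref{hyp:sigma}), and then a Poincaré-type estimate $|\phi|\le c|\grad_{\Omega_c}\phi|$ on that complement makes $\phi$ depend continuously on $w$. It remains to check $\phi\in V$: the condition $\grad_{\Omega_c}\phi=w\in\mathcal{H}_{D,\Omega_c}^{\perp}$ is already secured, and $\dive_{\Omega_c}\grad_{\Omega_c}\phi=0$ follows because $w=\lim\grad_{\Omega_c}\phi_n$ with $\dive_{\Omega_c}\grad_{\Omega_c}\phi_n=0$ and $\dive_{\Omega_c}$ (being closed as the negative adjoint of $\interior\grad_{\Omega_c}$) is weakly closed — so $w\in D(\dive_{\Omega_c})$ with $\dive_{\Omega_c}w=0$, i.e. $\phi\in N(\dive_{\Omega_c}\grad_{\Omega_c})$. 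Hence $w\in H_{3}$ and $H_{3}$ is closed.

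\emph{Main obstacle.} The delicate point is Step~1: correctly extracting that $\chi_{\Omega_c}U$ is curl-free \emph{on $\Omega_c$} (not merely on $\Omega$) and admits a scalar potential, which requires the interface hypotheses (Hypothesis~\ref{hyp:rint}, second and third parts) to be invoked exactly — the extension-by-zero behaviour of elements of $N(\interior\curl_{\Omega\setminus\overline{\Omega_c}})$ and the closedness of $R(\grad_{\Omega_c})$ — together with keeping straight in which of the three spaces $L^2(\Omega)$, $L^2(\Omega_c)$, $L^2(\Omega\setminus\overline{\Omega_c})$ each orthogonality is taken. The closedness in Step~2 is then essentially a packaging of the closed-range/Poincaré machinery already assumed, so I expect it to be routine once $V$ and the potential map are set up carefully.
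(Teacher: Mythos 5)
Your Step 2 (closedness) is essentially the paper's argument: closed range of $\grad_{\Omega_{c}}$ gives a potential for the limit, and closedness of $N(\dive_{\Omega_{c}})$ and of $\mathcal{H}_{D,\Omega_{c}}^{\bot_{L^{2}(\Omega_{c})}}$ preserve the two side conditions. (The normalisation of $\phi$ and the Poincar\'e estimate are not needed for this.)

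Step 1, however, has a genuine gap, and it sits exactly at the point you flag as the main obstacle. You argue that $\chi_{\Omega_{c}}U\in N(\interior\curl_{\Omega_{c}})$ by applying Hypothesis \ref{hyp:rint} to $\chi_{\Omega_{c}}U$. But that hypothesis applies to fields which, extended by zero, lie in $N(\interior\curl)$ as elements of $L^{2}(\Omega,\mathbb{R}^{3})$, and $\chi_{\Omega_{c}}U$ is in general \emph{not} such a field: truncating a curl-free field at $\partial\Omega_{c}$ creates a tangential jump, so $\chi_{\Omega_{c}}U$ need not be weakly curl-free on $\Omega$, and it certainly need not satisfy the homogeneous tangential boundary condition on $\partial\Omega_{c}$ encoded in $\interior\curl_{\Omega_{c}}$. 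Your supporting claim that the $\Omega\setminus\overline{\Omega_{c}}$-part of $U$ lies in $N(\interior\curl_{\Omega\setminus\overline{\Omega_{c}}})$ is likewise unjustified — the definition of $H_{2}$ only places $\chi_{\Omega\setminus\overline{\Omega_{c}}}U$ in $N(\dive_{\Omega\setminus\overline{\Omega_{c}}})\cap\mathcal{H}_{D,\Omega\setminus\overline{\Omega_{c}}}^{\bot_{L^{2}(\Omega\setminus\overline{\Omega_{c}})}}$. Moreover, even if $\chi_{\Omega_{c}}U$ were curl-free on $\Omega_{c}$, the passage to a scalar potential via ``$N(\interior\curl_{\Omega_{c}})=R(\grad_{\Omega_{c}})\oplus\dots$ or the appropriate variant'' leaves a cohomological obstruction (harmonic Neumann-type fields on $\Omega_{c}$) that none of the hypotheses rule out. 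The paper avoids both problems by working with a \emph{global} potential: since $\partial\Omega$ is connected, $N(\interior\curl)=R(\interior\grad)$ by \eqref{eq:N(curl)_is_grad}, so $U=\interior\grad\psi$ for some $\psi\in D(\interior\grad)$; the extension property \eqref{eq:extension-prp} of Hypothesis \ref{hyp:sigma} then gives $\phi\coloneqq\chi_{\Omega_{c}}\psi\in D(\grad_{\Omega_{c}})$ with $\grad_{\Omega_{c}}\phi=\chi_{\Omega_{c}}U$, and membership of $\chi_{\Omega_{c}}U$ in $N(\dive_{\Omega_{c}})\cap\mathcal{H}_{D,\Omega_{c}}^{\bot_{L^{2}(\Omega_{c})}}$ is read off directly from Lemma \ref{lem:decom}. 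You should replace your Step 1 by this restriction-of-the-global-potential argument.
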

\begin{proof}
Obviously, $H_{3}$ is a subspace of $L^{2}(\Omega_{c})^{3}.$ For
proving the closedness of $H_{3},$ let $(\phi_{n})_{n\in\mathbb{N}}$
be a sequence in $N(\dive_{\Omega_{c}}\grad_{\Omega_{c}})\cap\left\{ \phi\,|\,\grad_{\Omega_{c}}\phi\in\mathcal{H}_{D,\Omega_{c}}^{\bot_{L^{2}(\Omega_{c})}}\right\} $
such that $\grad_{\Omega_{c}}\phi_{n}\to u$ for some $u\in L^{2}(\Omega_{c})^{3}.$
Since $R(\grad_{\Omega_{c}})$ is closed by Assumption \ref{hyp:rint}
we infer that $u=\grad_{\Omega_{c}}\phi$ for some $\phi\in D(\grad_{\Omega_{c}}).$
Since $\grad_{\Omega_{c}}\phi_{n}\in N(\dive_{\Omega_{c}})$ for each
$n\in\mathbb{N}$ it follows by the closedness of $N(\dive_{\Omega_c})$
that also $u=\grad_{\Omega_{c}}\phi\in N(\dive_{\Omega_{c}}),$ i.e.
$\phi\in N(\dive_{\Omega_{c}}\grad_{\Omega_{c}}).$ Finally, since
$\mathcal{H}_{D,\Omega_{c}}^{\bot}$ is closed and $\grad_{\Omega_{c}}\phi_{n}\in\mathcal{H}_{D,\Omega_{c}}^{\bot_{L^{2}(\Omega_{c})}}$
for each $n\in\mathbb{N}$, the same holds true for $u=\grad_{\Omega_{c}}\phi.$
Summarising, we have shown that $u\in H_{3}$ and thus, $H_{3}$ is
closed. \\
Take now $U\in H_{2}.$ In particular, $U\in N(\interior\curl)=R(\interior\grad)$
by (\ref{eq:N(curl)_is_grad}), and hence, $U=\interior\grad\,\psi$
for some $\psi\in D(\interior\grad).$ By Assumption \ref{hyp:sigma}
it follows that $\phi\coloneqq\chi_{\Omega_{c}}\psi\in D(\grad_{\Omega_{c}})$
and 
\[
\grad_{\Omega_{c}}\phi=\chi_{\Omega_{c}}\interior\grad\psi=\chi_{\Omega_{c}}U.
\]
Moreover, since $U\in H_{2}$, it follows by Lemma \ref{lem:decom}
that $\grad_{\Omega_{c}}\phi=\chi_{\Omega_{c}}U\in N(\dive_{\Omega_{c}})\cap\mathcal{H}_{D,\Omega_{c}}^{\bot_{L^{2}(\Omega_{c})}}$
which shows that $\chi_{\Omega_{c}}U\in H_{3}.$
\end{proof}
In the following, we consider the operator 
\begin{align*}
Z:H_{2} & \to H_{3}\\
U & \mapsto\chi_{\Omega_{c}}U.
\end{align*}
\begin{lem}\label{lem:Z_injective} The operator $Z$ is one-to-one.

\end{lem}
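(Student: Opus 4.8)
Since $Z\colon H_{2}\to H_{3}$, $U\mapsto\chi_{_{\Omega_{c}}}U$, is linear, it suffices to show that $U\in H_{2}$ with $\chi_{_{\Omega_{c}}}U=0$ forces $U=0$. The plan is to combine the explicit description of $H_{2}$ from Lemma \ref{lem:decom} with the localisation identities of Hypothesis \ref{hyp:rint}.

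First I would note that, since $\partial\Omega_{c}$ is a Lebesgue null set by Hypothesis \ref{hyp:sigma}, one has the orthogonal decomposition $L^{2}(\Omega,\mathbb{R}^{3})=L^{2}(\Omega_{c},\mathbb{R}^{3})\oplus L^{2}(\Omega\setminus\overline{\Omega_{c}},\mathbb{R}^{3})$, so that the description of $H_{2}$ in Lemma \ref{lem:decom} says precisely that for $U\in H_{2}$ we have $U\in N(\interior\curl)$, $\chi_{_{\Omega_{c}}}U\in N(\dive_{\Omega_{c}})\cap\mathcal{H}_{D,\Omega_{c}}^{\perp_{L^{2}(\Omega_{c})}}$ and $\chi_{_{\Omega\setminus\overline{\Omega_{c}}}}U\in N(\dive_{\Omega\setminus\overline{\Omega_{c}}})\cap\mathcal{H}_{D,\Omega\setminus\overline{\Omega_{c}}}^{\perp_{L^{2}(\Omega\setminus\overline{\Omega_{c}})}}$. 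If $\chi_{_{\Omega_{c}}}U=0$, then $U$ is supported in $\Omega\setminus\overline{\Omega_{c}}$ and $U\in N(\dive_{\Omega\setminus\overline{\Omega_{c}}})\cap\mathcal{H}_{D,\Omega\setminus\overline{\Omega_{c}}}^{\perp_{L^{2}(\Omega\setminus\overline{\Omega_{c}})}}$.

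Next I would localise the curl condition: because $U\in N(\interior\curl)$ and $U$ vanishes on $\overline{\Omega_{c}}$, the identity $N(\interior\curl)\cap L^{2}(\Omega\setminus\overline{\Omega_{c}},\mathbb{R}^{3})=N(\interior\curl_{\Omega\setminus\overline{\Omega_{c}}})$ from Hypothesis \ref{hyp:rint} gives $U\in N(\interior\curl_{\Omega\setminus\overline{\Omega_{c}}})$. Combining this with the divergence-free property from the previous step yields $U\in N(\dive_{\Omega\setminus\overline{\Omega_{c}}})\cap N(\interior\curl_{\Omega\setminus\overline{\Omega_{c}}})=\mathcal{H}_{D,\Omega\setminus\overline{\Omega_{c}}}$; but $U$ was simultaneously orthogonal to this space in $L^{2}(\Omega\setminus\overline{\Omega_{c}},\mathbb{R}^{3})$, so $U\in\mathcal{H}_{D,\Omega\setminus\overline{\Omega_{c}}}\cap\mathcal{H}_{D,\Omega\setminus\overline{\Omega_{c}}}^{\perp_{L^{2}(\Omega\setminus\overline{\Omega_{c}})}}=\{0\}$, i.e.\ $U=0$, which proves injectivity of $Z$.

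The argument is essentially careful bookkeeping once Lemma \ref{lem:decom} is available; the only point that requires attention is the second step, where one must invoke exactly the right restriction identity from Hypothesis \ref{hyp:rint} and then recognise the contradiction between $U$ being a harmonic Dirichlet field on $\Omega\setminus\overline{\Omega_{c}}$ and $U$ lying in the orthogonal complement of all such fields. No compactness or Poincar\'e-type estimate is needed here; those are reserved for the proof of Proposition \ref{prop:key}.
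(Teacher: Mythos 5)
Your proof is correct and follows essentially the same route as the paper's: use $\chi_{_{\Omega_{c}}}U=0$ together with $U\in N(\interior\curl)$ and Hypothesis \ref{hyp:rint} to localise to $N(\interior\curl_{\Omega\setminus\overline{\Omega_{c}}})$, then combine with the divergence-free and $\mathcal{H}_{D,\Omega\setminus\overline{\Omega_{c}}}^{\perp}$ conditions from the description of $H_{2}$ to land in $\mathcal{H}_{D,\Omega\setminus\overline{\Omega_{c}}}\cap\mathcal{H}_{D,\Omega\setminus\overline{\Omega_{c}}}^{\perp_{L^{2}(\Omega\setminus\overline{\Omega_{c}})}}=\{0\}$. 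The extra bookkeeping you supply about the orthogonal splitting of $L^{2}(\Omega,\mathbb{R}^{3})$ is harmless but not needed beyond what Lemma \ref{lem:decom} already provides.
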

\begin{proof}
Let $U\in H_{2}$ with $ZU=\chi_{\Omega_{c}}U=0.$ Since $U=0$ on
$\Omega_{c}$ and $U\in N(\interior\curl),$ we infer by Assumption
\ref{hyp:rint} that $U\in N(\interior\curl_{\Omega\setminus\overline{\Omega_{c}}})$.
Moreover, by the definition of $H_{2}$ we get that $U\in N(\dive_{\Omega\setminus\overline{\Omega_{c}}})\cap\mathcal{H}_{D,\Omega\setminus\overline{\Omega_{c}}}^{\bot_{L^{2}(\Omega\setminus\overline{\Omega_{c}})}}$
and thus, 
\[
U\in N(\interior\curl_{\Omega\setminus\overline{\Omega_{c}}})\cap N(\dive_{\Omega\setminus\overline{\Omega_{c}}})\cap\mathcal{H}_{D,\Omega\setminus\overline{\Omega_{c}}}^{\bot_{L^{2}(\Omega\setminus\overline{\Omega_{c}})}}=\mathcal{H}_{D,\Omega\setminus\overline{\Omega_{c}}}\cap\mathcal{H}_{D,\Omega\setminus\overline{\Omega_{c}}}^{\bot_{L^{2}(\Omega\setminus\overline{\Omega_{c}})}}=\{0\}.\tag*{{\qedhere}}
\]
\end{proof}
\begin{lem} \label{lem:Z_onto} The operator $Z$ is onto.

\end{lem}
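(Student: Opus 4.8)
The plan is to prove surjectivity of $Z\colon H_{2}\to H_{3}$ by constructing, for an arbitrary $w\in H_{3}$, an explicit element $U\in H_{2}$ with $\chi_{\Omega_{c}}U=w$. Throughout I abbreviate $A_{1}\coloneqq N(\dive_{\Omega_{c}})\cap\mathcal{H}_{D,\Omega_{c}}^{\perp_{L^{2}(\Omega_{c})}}$ and $A_{2}\coloneqq N(\dive_{\Omega\setminus\overline{\Omega_{c}}})\cap\mathcal{H}_{D,\Omega\setminus\overline{\Omega_{c}}}^{\perp_{L^{2}(\Omega\setminus\overline{\Omega_{c}})}}$, so that by Lemma \ref{lem:decom} membership $U\in H_{2}$ is equivalent to the three requirements $U\in N(\interior\curl)$, $\chi_{\Omega_{c}}U\in A_{1}$ and $\chi_{\Omega\setminus\overline{\Omega_{c}}}U\in A_{2}$. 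Since $w=\grad_{\Omega_{c}}\phi$ with $\dive_{\Omega_{c}}\grad_{\Omega_{c}}\phi=0$ and $\grad_{\Omega_{c}}\phi\in\mathcal{H}_{D,\Omega_{c}}^{\perp_{L^{2}(\Omega_{c})}}$ by definition of $H_{3}$, we already have $w\in A_{1}$; thus the condition $\chi_{\Omega_{c}}U\in A_{1}$ will be free once we force $\chi_{\Omega_{c}}U=w$, and the real content is to meet the other two requirements simultaneously.

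First I would lift the potential. By Hypothesis \ref{hyp:sigma} (equation \eqref{eq:extension-prp}) there is $\psi\in D(\interior\grad)$ with $\chi_{\Omega_{c}}\psi=\phi$, and, exactly as in the proof of Lemma \ref{lem:Hilbert_space_H3}, this yields $\chi_{\Omega_{c}}\interior\grad\psi=\grad_{\Omega_{c}}\phi=w$. Put $U_{1}\coloneqq\interior\grad\psi$. Then $U_{1}\in R(\interior\grad)=N(\interior\curl)$ by \eqref{eq:N(curl)_is_grad} and $\chi_{\Omega_{c}}U_{1}=w$, but $v\coloneqq\chi_{\Omega\setminus\overline{\Omega_{c}}}U_{1}\in L^{2}(\Omega\setminus\overline{\Omega_{c}},\mathbb{R}^{3})$ is so far uncontrolled. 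The next step is a Helmholtz-type decomposition on $\Omega\setminus\overline{\Omega_{c}}$: since this set is bounded, $R(\interior\grad_{\Omega\setminus\overline{\Omega_{c}}})$ is closed by Poincar\'e's inequality, and using $N(\dive_{\Omega\setminus\overline{\Omega_{c}}})=R(\interior\grad_{\Omega\setminus\overline{\Omega_{c}}})^{\perp}$ together with the split $N(\dive_{\Omega\setminus\overline{\Omega_{c}}})=\mathcal{H}_{D,\Omega\setminus\overline{\Omega_{c}}}\oplus A_{2}$ one obtains the orthogonal decomposition $L^{2}(\Omega\setminus\overline{\Omega_{c}},\mathbb{R}^{3})=R(\interior\grad_{\Omega\setminus\overline{\Omega_{c}}})\oplus\mathcal{H}_{D,\Omega\setminus\overline{\Omega_{c}}}\oplus A_{2}$. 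Decompose $v=v_{\mathrm g}+v_{\mathrm h}+v_{\mathrm d}$ accordingly, and let $\tilde v_{\mathrm g},\tilde v_{\mathrm h}$ denote the extensions by zero of $v_{\mathrm g},v_{\mathrm h}$ to $\Omega$.

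Now set $U\coloneqq U_{1}-\tilde v_{\mathrm g}-\tilde v_{\mathrm h}$. The reason for correcting only along the ``gradient'' and ``harmonic Dirichlet'' directions is that $v_{\mathrm g}\in R(\interior\grad_{\Omega\setminus\overline{\Omega_{c}}})\subseteq N(\interior\curl_{\Omega\setminus\overline{\Omega_{c}}})$ and $v_{\mathrm h}\in\mathcal{H}_{D,\Omega\setminus\overline{\Omega_{c}}}\subseteq N(\interior\curl_{\Omega\setminus\overline{\Omega_{c}}})$, so by Hypothesis \ref{hyp:rint} their zero-extensions $\tilde v_{\mathrm g},\tilde v_{\mathrm h}$ lie in $N(\interior\curl)$; hence $U\in N(\interior\curl)$. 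Since $\tilde v_{\mathrm g}$ and $\tilde v_{\mathrm h}$ vanish on $\Omega_{c}$, we keep $\chi_{\Omega_{c}}U=\chi_{\Omega_{c}}U_{1}=w\in A_{1}$, while $\chi_{\Omega\setminus\overline{\Omega_{c}}}U=v-v_{\mathrm g}-v_{\mathrm h}=v_{\mathrm d}\in A_{2}$. As $\overline{\Omega_{c}}$ has Lebesgue-null boundary we may write $U=\chi_{\Omega_{c}}U+\chi_{\Omega\setminus\overline{\Omega_{c}}}U\in A_{1}\oplus A_{2}$, so $U\in N(\interior\curl)\cap(A_{1}\oplus A_{2})=H_{2}$ by Lemma \ref{lem:decom}, and $ZU=\chi_{\Omega_{c}}U=w$. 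Since $w\in H_{3}$ was arbitrary, $Z$ is onto.

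I expect the only genuinely delicate point to be ensuring that the two corrections do not spoil global curl-freeness and do not perturb the $\Omega_{c}$-component at the same time — this is precisely why one corrects exclusively along the gradient and harmonic-Dirichlet directions on $\Omega\setminus\overline{\Omega_{c}}$ and must invoke the kernel compatibility in Hypothesis \ref{hyp:rint} (and, for the initial lift, the extension property in Hypothesis \ref{hyp:sigma}). The Helmholtz decomposition on $\Omega\setminus\overline{\Omega_{c}}$ and the verification that $v_{\mathrm d}$ indeed lands in $A_{2}$ are routine, given the closed-range facts already recorded in and around Lemma \ref{lem:decom}.
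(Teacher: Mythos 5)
Your construction is correct and is essentially the paper's own argument: both proofs lift $\phi$ to $\psi\in D(\interior\grad)$ via \eqref{eq:extension-prp} and then remove from $\chi_{\Omega\setminus\overline{\Omega_{c}}}\interior\grad\psi$ its components in $R(\interior\grad_{\Omega\setminus\overline{\Omega_{c}}})$ and in $\mathcal{H}_{D,\Omega\setminus\overline{\Omega_{c}}}$, leaving the divergence-free, Dirichlet-field-orthogonal remainder on $\Omega\setminus\overline{\Omega_{c}}$ while preserving the trace $W$ on $\Omega_{c}$. The only difference is packaging: the paper performs the first correction at the level of the potential (adding $\theta$) and the second via a projection inside $R(\interior\grad)$, whereas you subtract both components of a single Helmholtz-type decomposition and re-verify curl-freeness through the zero-extension direction of Hypothesis \ref{hyp:rint} — the resulting field $U$ is the same.
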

\begin{proof}
Let $W\in H_{3},$ i.e. $W\in N(\dive_{\Omega_{c}})\cap\mathcal{H}_{D,\Omega_{c}}^{\bot_{L^{2}(\Omega_{c})}}$
and there is $\phi\in D(\grad_{\Omega_{c}})$ with
\[
W=\grad_{\Omega_{c}}\phi.
\]
By (\ref{eq:extension-prp}) there is $\psi\in D(\interior\grad)$
such that $\phi=\chi_{\Omega_{c}}\psi$. Note that by Poincare's inequality,
$R(\interior\grad_{\Omega\setminus\overline{\Omega_{c}}})$ is a closed
subspace of $L^{2}(\text{\ensuremath{\Omega\setminus}\ensuremath{\overline{\Omega_{c}}})}$.
Denoting the orthogonal projector onto $R(\interior\grad_{\Omega\setminus\overline{\Omega_{c}}})$
by $P_{R(\interior\grad_{\Omega\setminus\overline{\Omega_{c}}})}$
we consider 
\[
-P_{R(\interior\grad_{\Omega\setminus\overline{\Omega_{c}}})}\chi_{\Omega\setminus\overline{\Omega_{c}}}\interior\grad\psi\in R(\interior\grad_{\Omega\setminus\overline{\Omega_{c}}}),
\]
and thus, we find $\theta\in D(\interior\grad_{\Omega\setminus\overline{\Omega_{c}}})$
with 
\[
\interior\grad_{\Omega\setminus\overline{\Omega_{c}}}\theta=-P_{R(\interior\grad_{\Omega\setminus\overline{\Omega_{c}}})}\chi_{\Omega\setminus\overline{\Omega_{c}}}\interior\grad\psi.
\]
We set 
\[
\tilde{\psi}\coloneqq\psi+\theta\in D(\interior\grad)
\]
and obtain 
\begin{align}
\chi_{\Omega\setminus\overline{\Omega_{c}}}\interior\grad\tilde{\psi} & =\chi_{\Omega\setminus\overline{\Omega_{c}}}\interior\grad\psi+\interior\grad_{\Omega\setminus\overline{\Omega_{c}}}\theta\nonumber \\
 & =(1-P_{R(\interior\grad_{\Omega\setminus\overline{\Omega_{c}}})})\chi_{\Omega\setminus\overline{\Omega_{c}}}\interior\grad\psi\nonumber \\
 & \in R(\interior\grad_{\Omega\setminus\overline{\Omega_{c}}})^{\bot_{L^{2}(\Omega\setminus\overline{\Omega_{c}})}}=N(\dive_{\Omega\setminus\overline{\Omega_{c}}}).\label{eq:psi-tilde-harmonic}
\end{align}
Finally, we note that 
\[
\mathcal{H}_{D,\Omega\setminus\overline{\Omega_{c}}}\subseteq N(\interior\curl_{\Omega\setminus\overline{\Omega_{c}}})=N(\interior\curl)\cap L^{2}(\Omega\setminus\overline{\Omega_{c}})=R(\interior\grad)\cap L^{2}(\Omega\setminus\overline{\Omega_{c}})\subseteq R(\interior\grad),
\]
where we have used Assumption \ref{hyp:rint} for the first equality
and (\ref{eq:N(curl)_is_grad}) for the second equality. Hence, $\mathcal{H}_{D,\Omega\setminus\Omega_{c}}$
is a closed subspace of $R(\interior\grad).$ We now define 
\[
U\coloneqq P_{\mathcal{H}_{D,\Omega\setminus\overline{\Omega_{c}}}^{\bot_{R(\interior\grad)}}}\interior\grad\tilde{\psi}
\]
and obtain 
\[
U\in\mathcal{H}_{D,\Omega\setminus\overline{\Omega_{c}}}^{\bot_{R(\interior\grad)}}=\mathcal{H}_{D,\Omega\setminus\overline{\Omega_{c}}}^{\bot_{L^{2}(\Omega)}}\cap R(\interior\grad)=\left(\mathcal{H}_{D,\Omega\setminus\overline{\Omega_{c}}}^{\bot_{L^{2}(\Omega\setminus\overline{\Omega_{c}})}}\oplus L^{2}(\Omega_{c})\right)\cap R(\interior\grad).
\]
Thus, in particular, $U\in N(\interior\curl)$ and $\chi_{\Omega\setminus\overline{\Omega_{c}}}U\in\mathcal{H}_{D,\Omega\setminus\overline{\Omega_{c}}}^{\bot_{L^{2}(\Omega\setminus\overline{\Omega_{c}})}}$.
Moreover, 
\[
U-\interior\grad\tilde{\psi}\in\mathcal{H}_{D,\Omega\setminus\overline{\Omega_{c}}}\subseteq N(\dive_{\Omega\setminus\Omega_{c}})
\]
and thus, in particular $U-\interior\grad\tilde{\psi}=0$ on $L^{2}(\Omega_{c})$
and 
\[
\chi_{\Omega\setminus\overline{\Omega_{c}}}U=\chi_{\Omega\setminus\overline{\Omega_{c}}}(U-\interior\grad\tilde{\psi})+\chi_{\Omega\setminus\overline{\Omega_{c}}}\interior\grad\tilde{\psi}\in N(\dive_{\Omega\setminus\overline{\Omega_{c}}}),
\]
where we have used (\ref{eq:psi-tilde-harmonic}). On the other hand,
we have 
\begin{align*}
\chi_{\Omega_{c}}U & =\chi_{\Omega_{c}}\interior\grad\tilde{\psi}\\
 & =\chi_{\Omega_{c}}\interior\grad\psi+\chi_{\Omega_{c}}\interior\grad\theta\\
 & =\grad_{\Omega_{c}}\chi_{\Omega_{c}}\psi\\
 & =\grad_{\Omega_{c}}\phi\\
 & =W\in N(\dive_{\Omega_{c}})\cap\mathcal{H}_{D,\Omega_{c}}^{\bot_{L^{2}(\Omega_{c})}},
\end{align*}
and thus, $U\in H_{2}$ with $ZU=W.$ This completes the proof. 
\end{proof}
Now we are able to prove Proposition \ref{prop:key}.
\begin{proof}[Proof of Proposition \ref{prop:key}]
 Since $Z:H_{2}\to H_{3}$ is continuous, one-to-one and onto, it
follows that $Z^{-1}:H_{3}\to H_{2}$ is continuous as well by the
closed graph theorem. Thus, the assertion follows with $k_{1}\coloneqq\|Z^{-1}\|.$ 
\end{proof}

We are finally in the position to prove inequality \eqref{eq:assume}
and, therefore, to complete the proof of Proposition \ref{prop:setass}.

\begin{lem}There is a positive constant $c_{0}$ such that we have
\begin{equation}
c_{0}\left|U\right|^{2}\leq\left|\sigma^{1/2}U\right|^{2}+\left|\interior\curl U\right|^{2}\label{eq:pos-def-eddy}
\end{equation}
for all $U\in D\left(\interior\curl\right)\cap H_{0}$. \end{lem}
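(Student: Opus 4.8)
The plan is to exploit the orthogonal splitting $H_0=R(C^*)\oplus H_1\oplus H_2$ recorded in \eqref{eq:ortho-H0} (see also Lemma \ref{lem:For-later-purposes}) and to estimate each of the three constituents of $U$ separately by the right-hand side of \eqref{eq:pos-def-eddy}. So I would take $U\in D(\interior\curl)\cap H_0$ and write $U=U_0+U_1+U_2$ with $U_0\in R(C^*)$, $U_1\in H_1$, $U_2\in H_2$. Since $H_1,H_2\subseteq N(C)=N(\interior\curl)$, the summand $U_0=U-U_1-U_2$ lies in $D(\interior\curl)\cap R(C^*)$, and $\interior\curl U=\interior\curl U_0$; by orthogonality $|U|^2=|U_0|^2+|U_1|^2+|U_2|^2$.

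First I would handle $U_0$ via \eqref{eq:intcu}: $|U_0|^2\le k_0^2|\interior\curl U_0|^2=k_0^2|\interior\curl U|^2$. Next comes the $\sigma$-term: by Hypothesis \ref{hyp:sigma}, $\tilde\sigma$ is strictly positive definite, so there is $\gamma>0$ with
\[
|\sigma^{1/2}U|^2=\langle\sigma U|U\rangle=\langle\tilde\sigma\,\iota_{\Omega_c}^*U|\iota_{\Omega_c}^*U\rangle\ge\gamma\,|\iota_{\Omega_c}^*U|^2=\gamma\,|\chi_{\Omega_c}U|^2
\]
for every $U\in H$. The decisive step is to control $|U_1|^2+|U_2|^2$ by $|\chi_{\Omega_c}U|$. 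Combining Lemma \ref{lem:For-later-purposes-1} with Proposition \ref{prop:key} (after enlarging $k_1$ so that $k_1\ge1$) gives
\[
|\chi_{\Omega_c}(U_1+U_2)|^2=|U_1|^2+|\chi_{\Omega_c}U_2|^2\ge|U_1|^2+k_1^{-2}|U_2|^2\ge k_1^{-2}\,(|U_1|^2+|U_2|^2),
\]
while, since $\chi_{\Omega_c}(U_1+U_2)=\chi_{\Omega_c}U-\chi_{\Omega_c}U_0$ and multiplication by $\chi_{\Omega_c}$ is a contraction, the triangle inequality yields $|\chi_{\Omega_c}(U_1+U_2)|^2\le 2|\chi_{\Omega_c}U|^2+2|U_0|^2$.

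It then remains to assemble these bounds. From the last two displays and the $\sigma$-estimate,
\[
k_1^{-2}\,(|U_1|^2+|U_2|^2)\le 2|\chi_{\Omega_c}U|^2+2|U_0|^2\le 2\gamma^{-1}|\sigma^{1/2}U|^2+2k_0^2|\interior\curl U|^2,
\]
and hence
\[
|U|^2=|U_0|^2+|U_1|^2+|U_2|^2\le(1+2k_1^2)\,k_0^2\,|\interior\curl U|^2+2k_1^2\gamma^{-1}\,|\sigma^{1/2}U|^2,
\]
which is \eqref{eq:pos-def-eddy} with $c_0:=\bigl(\max\{(1+2k_1^2)k_0^2,\,2k_1^2\gamma^{-1}\}\bigr)^{-1}>0$.

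I do not expect a genuine obstacle at this point: the substantive difficulty — estimating an element of $H_2$ (the portion of $N(C)$ surviving inside $H_0$, which geometrically is \emph{not} supported in $\Omega_c$ alone) by its restriction to the conducting region $\Omega_c$, where $\sigma$ is elliptic — has already been resolved in Proposition \ref{prop:key} through the boundedness of $Z^{-1}$. What is left is the bookkeeping of the orthogonal decomposition together with the handling of the cross term $\langle\chi_{\Omega_c}U_0|\chi_{\Omega_c}(U_1+U_2)\rangle$, which need not vanish because multiplication by $\chi_{\Omega_c}$ destroys the $L^2(\Omega)$-orthogonality between $R(C^*)$ and $N(\interior\curl)$; a crude triangle/Young estimate as above absorbs it.
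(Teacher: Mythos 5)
Your proof is correct and follows essentially the same route as the paper's: the orthogonal decomposition $U=U_{0}+U_{1}+U_{2}$ from \eqref{eq:ortho-H0}, the estimate \eqref{eq:intcu} for the $R(C^{*})$-part, Proposition \ref{prop:key} for the $H_{2}$-part, and Lemma \ref{lem:For-later-purposes-1} to absorb $U_{1}$, with the cross term handled by the same crude triangle/Young bound. The only (immaterial) differences are that the paper first reduces the claim to an estimate against $\left|\chi_{\Omega_{c}}U\right|^{2}$ before decomposing, and the resulting constants are bookkept slightly differently.
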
\begin{proof}By
the positive definiteness of $\tilde{\sigma}$, see Assumption \ref{hyp:sigma},
we obtain for all $U\in D(\interior\curl)\cap H_{0}$
\[
c_{*}\left|\chi_{_{\Omega_{c}}}U\right|^{2}+\left|\interior\curl U\right|^{2}\leq\left|\sigma^{1/2}U\right|^{2}+\left|\interior\curl U\right|^{2}
\]
for some $c_{*}>0$. Thus, the desired estimate follows if we can
show that there is $c>0$ such that for all $U\in D(\interior\curl)\cap H_{0}$
\[
c\left|U\right|^{2}\leq\left|\chi_{_{\Omega_{c}}}U\right|^{2}+\left|\interior\curl U\right|^{2}.
\]
We shall employ the above decomposition \eqref{eq:ortho-H0} so that
$U=U_{0}+U_{1}+U_{2}$ with $U_{0}\in R\left(\curl\mu^{-\frac{1}{2}}\right)$,
$U_{k}\in H_{k}$, $k\in\{1,2\}$. We compute using \eqref{eq:intcu},
Lemma \ref{prop:key}, and Lemma \ref{lem:For-later-purposes-1}
\begin{eqnarray*}
\left|U\right|^{2} & = & \left|U_{0}\right|^{2}+\left|U_{1}\right|^{2}+\left|U_{2}\right|^{2}\\
 & \leq & k_{0}^{2}\left|\interior\curl U_{0}\right|^{2}+k_{1}^{2}\left|\chi_{_{\Omega_{c}}}U_{2}\right|^{2}+\left|U_{1}\right|^{2}\\
 & \leq & k_{0}^{2}\left|\interior\curl U_{0}\right|^{2}+\max\left\{ 1,k_{1}^{2}\right\} \left|\chi_{_{\Omega_{c}}}\left(U_{1}+U_{2}\right)\right|^{2}\\
 & \leq & k_{0}^{2}\left|\interior\curl U_{0}\right|^{2}+2\max\left\{ 1,k_{1}^{2}\right\} \left|\chi_{_{\Omega_{c}}}\left(U_{0}+U_{1}+U_{2}\right)\right|^{2}+\\
 &  & +2\max\left\{ 1,k_{1}^{2}\right\} \left|\chi_{_{\Omega_{c}}}U_{0}\right|^{2},\\
 & \leq & k_{0}^{2}\left|\interior\curl U_{0}\right|^{2}+2\max\left\{ 1,k_{1}^{2}\right\} \left|\chi_{_{\Omega_{c}}}\left(U_{0}+U_{1}+U_{2}\right)\right|^{2}+\\
 &  & +2\max\left\{ 1,k_{1}^{2}\right\} \left|U_{0}\right|^{2},\\
 & \leq & k_{0}^{2}\left(1+2\max\left\{ 1,k_{1}^{2}\right\} \right)\left|\interior\curl U_{0}\right|^{2}+\\
 &  & +2\max\left\{ 1,k_{1}^{2}\right\} \left|\chi_{_{\Omega_{c}}}\left(U_{0}+U_{1}+U_{2}\right)\right|^{2},\\
 & \leq & \max\left\{ 2,2k_{1}^{2},k_{0}^{2}\left(1+2\max\left\{ 1,k_{1}^{2}\right\} \right)\right\} \left(\left|\interior\curl U\right|^{2}+\left|\chi_{_{\Omega_{c}}}U\right|^{2}\right)
\end{eqnarray*}
Thus we see that the estimate \eqref{eq:pos-def-eddy} holds for 
\[
c_{0}=\min\left\{ 1,c_{*}\right\} 
\]
with
\[
c_{*}=\frac{1}{\max\left\{ 2,2k_{1}^{2},k_{0}^{2}\left(1+2\max\left\{ 1,k_{1}^{2}\right\} \right)\right\} }.
\]
\end{proof}

We shall now summarise the findings of this section.

\begin{thm}\label{thm:mr}Let $\Omega\subseteq\mathbb{R}^{3}$ be
open with connected boundary. Assume Assumptions \ref{hyp:sigma},
\ref{hyp:cr}, \ref{hyp:rint} to be in effect. Then for every $F\in H_{\rho,0}\left(\mathbb{R},D(C_{0}^{\ast})'\right)$
(with $C_{0}\coloneqq\interior\curl|_{H_{0}}$) there is a unique
(weak) solution $U\in H_{\rho,0}\left(\mathbb{R},D\left(\interior\curl\right)\right)\cap H_{\rho,0}\left(\mathbb{R},H_{0}\right)$
of 
\[
\text{\ensuremath{\left(\overline{\partial_{0}\sigma+\curl\mu^{-1}\interior\curl}\right)}}\:U=F.
\]
Moreover the solution operator $S:H_{\rho,0}\left(\mathbb{R},D(C_{0}^{\ast})'\right)\to H_{\rho,0}\left(\mathbb{R},D\left(\interior\curl\right)\right)$
is continuous ($\left|\:\cdot\:\right|_{\rho,0,1}$ denotes the norm
of $H_{\rho,0}\left(\mathbb{R},D\left(\interior\curl\right)\right)$
and causal in the sense that
\[
\left|\chi_{_{]-\infty,a]}}SF\right|_{\rho,0,1}\leq C_{1}\:\left|\chi_{_{]-\infty,a]}}F\right|_{\rho,0,-1}
\]
for some positive $C_{1}$ uniformly in $a\in\mathbb{R}$ and $F\in H_{\rho,0}\left(\mathbb{R},D(C_{0}^{\ast})'\right)$
as long as $\rho\in\oi0\infty$ is sufficiently large.\end{thm}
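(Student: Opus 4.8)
The plan is to deduce Theorem \ref{thm:mr} directly from Theorem \ref{thm:Solution} together with Proposition \ref{prop:setass}, supplemented by the translation back to the original equation provided by Proposition \ref{prop:just}. First I would recall the concrete choices made in \eqref{eq:setting}: $H=X=L^{2}(\Omega,\mathbb{R}^{3})$, $C=\mu^{-1/2}\interior\curl$ with $D(C)=D(\interior\curl)$, and $\eta=\sigma$. By Proposition \ref{prop:setass} these data satisfy the hypotheses of Theorem \ref{thm:Solution}: $C$ is densely defined and closed (because $\mu^{-1/2}\in L(H)$ is selfadjoint and boundedly invertible), $R(C)$ is closed by Hypothesis \ref{hyp:cr}, and the decisive coercivity estimate \eqref{eq:assume} on $H_{0}=\left(N(\sigma)\cap N(\interior\curl)\right)^{\perp}$ is precisely the content of the final Lemma of the previous subsection, which in turn rests on the decomposition Lemma \ref{lem:decom}, on Lemma \ref{lem:For-later-purposes-1}, on the estimate \eqref{eq:intcu} valid on $R(C^{*})$, and on the key estimate of Proposition \ref{prop:key} on $H_{2}$.

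Next I would apply Theorem \ref{thm:Solution} verbatim to obtain, for all sufficiently large $\rho\in\oi0\infty$ and every $F\in H_{\rho,0}(\mathbb{R},D(C_{0}^{*})')$, a unique weak solution $U\in H_{\rho,0}(\mathbb{R},D(C_{0}))$ of $\left(\overline{\partial_{0}\eta_{0}+C_{0}^{*}C_{0}}\right)U=F$, together with the continuity and causality estimate $|\chi_{]-\infty,a]}SF|_{\rho,0,1}\le C_{1}|\chi_{]-\infty,a]}F|_{\rho,0,-1}$ uniformly in $a$. It then remains to phrase this in the notation of the theorem. Since $C=\mu^{-1/2}\interior\curl$ we have $C^{*}=\curl\mu^{-1/2}$ and hence $C^{*}C=\curl\mu^{-1}\interior\curl$; moreover, as $\mu^{-1/2}$ is boundedly invertible, $D(C)=D(\interior\curl)$ with equivalent graph norms and $N(C)=N(\interior\curl)$, so $H_{0}$ is unchanged, $D(C_{0})=D(\interior\curl)\cap H_{0}$ as a set with graph norm equivalent to $|\cdot|_{\rho,0,1}$, and $H_{\rho,0}(\mathbb{R},D(C_{0}))$ may be identified with $H_{\rho,0}(\mathbb{R},D(\interior\curl))\cap H_{\rho,0}(\mathbb{R},H_{0})$. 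This is what justifies abbreviating $C_{0}=\interior\curl|_{H_{0}}$ in the statement; the constant $C_{1}$ only changes by the norm-equivalence constants.

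Finally, to see that the solution of the reduced problem genuinely solves $\left(\overline{\partial_{0}\sigma+\curl\mu^{-1}\interior\curl}\right)U=F$, I would invoke Proposition \ref{prop:just} — equivalently, the reconstruction of $\mathrm{E},\mathrm{H}$ performed in the remark following Proposition \ref{prop:setass}. Since $\eta_{0}=\iota_{H_{0}\to H}^{*}\sigma\iota_{H_{0}\to H}$ and $C_{0}^{*}C_{0}$ agree with $\sigma$ and $\curl\mu^{-1}\interior\curl$ when tested against $D(C_{0})$ and against $H_{0}^{\perp}=N(\sigma)\cap N(\interior\curl)$ (on which both vanish), the identity $\partial_{0}\sigma U+\curl\mu^{-1}\interior\curl U=F$ holds distributionally, i.e. in $H_{\rho,-1}(\mathbb{R},D(C_{0}^{*})')$, so the closure bar may be dropped as in Remark \ref{NoteC}. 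I do not expect a real obstacle at this stage: all the hard analysis — the closed-range conditions and the coercivity \eqref{eq:assume} under Hypotheses \ref{hyp:sigma}–\ref{hyp:rint} — has already been carried out, and the only point needing a little care is the bookkeeping around the factor $\mu^{-1/2}$ and the precise meaning of the dual space $D(C_{0}^{*})'$, which costs nothing beyond equivalence of norms.
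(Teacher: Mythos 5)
Your proposal is correct and follows exactly the paper's route: the paper's proof of Theorem \ref{thm:mr} is the single sentence that the result follows from Theorem \ref{thm:Solution} in conjunction with Proposition \ref{prop:setass}, and your additional bookkeeping (the factor $\mu^{-1/2}$, the identification of $D(C_{0})$ with $D(\interior\curl)\cap H_{0}$ up to norm equivalence, and the appeal to Proposition \ref{prop:just} to drop the closure bar) is exactly the intended, if unwritten, content of that sentence.
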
\begin{proof}The
result follows from Theorem \ref{thm:Solution} in conjunction with
Proposition \ref{prop:setass}.\end{proof}

\red{
\begin{rem}\label{rem:Aphi}There are two engineering type approaches, which have inspired a number
of mathematical investigations see e.g. \cite{zbMATH06435646,zbMATH06418162,zbMATH06064782,zbMATH04191042}
and the literature quoted there. In engineering lingo they are frequently
referred to (by a slight abuse of language, turning adhoc names of variables
into constant names) as the $A$-$\varphi$ approach and the $T$-$\Omega$
approach, where two variants of a vector potential construction come
into play. Our approach is designed precisely to avoid these constructions,
which are actually adding complexity to an already sufficiently complex
topic. A crucial assumption in the application of these approaches
is that the current density source term $J$ is supposed to be divergence-free\footnote{\red{A divergence-free condition is also imposed in the existence result in \cite{RV2010}. We detail the potential formulation in the discussion here. What is said on the divergence condition, however, also applies to the time-harmonic setting focused on in \cite{RV2010}.}},
which, apart from requiring additional regularity of $J$,
excludes perfectly reasonable current densities, say $J=Ie_{3}$,
if $I$ is not completely constant in direction $e_{3}$. In contrast,
we are here considering the eddy current problem directly by solving
\[
\partial_{0}\sigma e+\curl\mu^{-1}\curl e=-J
\]
with a general square-integrable right-hand side (with an exponential
weight in the time direction) with only the obvious constraint that
$J$ is required to be in the closure of the range of $\partial_{0}\sigma+\curl\mu^{-1}\curl$. We emphasise that the present approach allows to recover the original unknowns, see Remark \ref{rem:recovery}.
\end{rem}}
\section{\label{sec:An-Extended-System}An Extended System Formulation for
the Pre-Maxwell System.}

For numerical purposes the construction of $H_{0}$ is not particularly
comfortable. We therefore want to propose an alternative formulation
in the spirit of the extended Maxwell system \cite{key-13,Tas07,PTW_Grav},
which in the context of numerical investigations is of so-called saddle-point
form. In fact, the key is to formulate belonging to $H_{0}^{\bot}$
with the help of belonging to the kernel of certain differential operators. \textcolor{black}{We therefore hope that the proposed reformulation might shed some light on possible numerical implementations of the considered model.}
Quite recently, this method has been applied to homogenisation problems,
see \cite{key-14}. 

Throughout this section, we assume $\Omega$ to be open and bounded
with connected boundary. Moreover, let the Assumptions \ref{hyp:sigma},
\ref{hyp:cr}, \ref{hyp:rint} be in effect. Moreover, we shall rather
focus on $\mu=1$. We need to impose an additional assumption for
this section:

\begin{hypo}\label{hyp:rome} Assume that 
\[
D\left(\interior\grad\right)=\left\{ \psi\in D\left(\grad_{\mathbb{R}^{3}}\right)|\psi=0\text{ on }\mathbb{R}^{3}\setminus\overline{\Omega}\right\} 
\]
as well as 
\[
D(\interior\grad_{\Omega\setminus\overline{\Omega_{c}}})=\{\psi\in D(\interior\grad)\,|\,\psi=0\text{ on }\Omega_{c}\}.
\]

\end{hypo}

\begin{rem} The latter assumption holds for instance, if $\Omega$
and $\Omega_{c}$ satisfy the segment property. \end{rem}

Amending the system in question by an equation in $H_{0}^{\perp_{L^{2}(\Omega)}}$
suitably leads to
\[
\left(\begin{array}{cc}
\left(\begin{array}{cc}
\partial_{0}\sigma+\curl\interior\curl & 0\\
0 & 0
\end{array}\right) & \left(\begin{array}{c}
0\\
\overset{\diamond}{\grad}_{\Omega\setminus\overline{\Omega_{c}}}
\end{array}\right)\\
\left(\begin{array}{cc}
0 & \overset{\diamond}{\dive}_{\Omega\setminus\overline{\Omega_{c}}}\end{array}\right) & 0
\end{array}\right)
\]
with $\left(H_{0}\oplus H_{0}^{\perp_{L^{2}(\Omega)}}\right)\oplus L^{2}(\Omega\setminus\overline{\Omega_{c}},\mathbb{R})$
as underlying Hilbert space. Here we have 
\begin{align*}
\overset{\diamond}{\grad}_{\Omega\setminus\overline{\Omega_{c}}}:D(\overset{\diamond}{\grad}_{\Omega\setminus\overline{\Omega_{c}}})\subseteq L^{2}(\Omega\setminus\overline{\Omega_{c}},\mathbb{R}) & \to H_{0}^{\bot_{L^{2}(\Omega)}}\\
\varphi & \mapsto\interior\grad\varphi
\end{align*}
with
\[
D\left(\overset{\diamond}{\grad}_{\Omega\setminus\overline{\Omega_{c}}}\right)=\left\{ \chi_{_{\Omega\setminus\overline{\Omega_{c}}}}\varphi\,\big|\,\varphi\in D\left(\interior\grad\right),\varphi\text{ constant on }\Omega_{c}\right\} .
\]
To fit our scheme we have let here
\[
\overset{\diamond}{\dive}_{\Omega\setminus\overline{\Omega_{c}}}\coloneqq-\overset{\diamond}{\grad}_{\Omega\setminus\overline{\Omega_{c}}}^{*}.
\]
A reason for the introduction of these new operators is the following
lemma. 

\begin{lem}It is
\[
\mathcal{H}{}_{D,\Omega\setminus\overline{\Omega_{c}}}=R\left(\overset{\diamond}{\grad}_{\Omega\setminus\overline{\Omega_{c}}}\right)\ominus R\left(\interior\grad_{\Omega\setminus\overline{\Omega_{c}}}\right).
\]
\end{lem}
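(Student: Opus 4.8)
The plan is to prove the slightly sharper identity $R(\overset{\diamond}{\grad}_{\Omega\setminus\overline{\Omega_c}})=N(\interior\curl_{\Omega\setminus\overline{\Omega_c}})$ and then to split off the harmonic Dirichlet fields by an orthogonal decomposition carried out in $L^{2}(\Omega\setminus\overline{\Omega_c},\mathbb{R}^{3})$. For the inclusion $R(\overset{\diamond}{\grad}_{\Omega\setminus\overline{\Omega_c}})\subseteq N(\interior\curl_{\Omega\setminus\overline{\Omega_c}})$ I would argue: if $\varphi\in D(\interior\grad)$ is constant on each connected component of $\Omega_c$, then $\interior\grad\varphi$ vanishes on $\Omega_c$, so by \eqref{eq:N(curl)_is_grad} it lies in $N(\interior\curl)\cap L^{2}(\Omega\setminus\overline{\Omega_c},\mathbb{R}^{3})$, which Hypothesis \ref{hyp:rint} identifies with $N(\interior\curl_{\Omega\setminus\overline{\Omega_c}})$. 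For the converse inclusion I would take $u\in N(\interior\curl_{\Omega\setminus\overline{\Omega_c}})$, extend it by zero to $\Omega$ (which keeps it in $D(\interior\curl)$ with $\interior\curl u=0$), and use \eqref{eq:N(curl)_is_grad} to write $u=\interior\grad\varphi$ with $\varphi\in D(\interior\grad)$; since $\interior\grad\varphi=u=0$ on $\Omega_c$ and $\Omega_c$ has, by Hypothesis \ref{hyp:sigma}, only finitely many connected components, $\varphi$ is constant on each of them, so $\chi_{\Omega\setminus\overline{\Omega_c}}\varphi\in D(\overset{\diamond}{\grad}_{\Omega\setminus\overline{\Omega_c}})$ and $u=\overset{\diamond}{\grad}_{\Omega\setminus\overline{\Omega_c}}(\chi_{\Omega\setminus\overline{\Omega_c}}\varphi)$. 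In particular this range is closed, being a null space.

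Next I would observe that $R(\interior\grad_{\Omega\setminus\overline{\Omega_c}})\subseteq R(\overset{\diamond}{\grad}_{\Omega\setminus\overline{\Omega_c}})$: by Hypothesis \ref{hyp:rome} every $\psi\in D(\interior\grad_{\Omega\setminus\overline{\Omega_c}})$ lies in $D(\interior\grad)$ and vanishes on $\Omega_c$, hence is admissible in the definition of $\overset{\diamond}{\grad}_{\Omega\setminus\overline{\Omega_c}}$ with $\overset{\diamond}{\grad}_{\Omega\setminus\overline{\Omega_c}}(\chi_{\Omega\setminus\overline{\Omega_c}}\psi)=\interior\grad\psi=\interior\grad_{\Omega\setminus\overline{\Omega_c}}\psi$. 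Since $R(\interior\grad_{\Omega\setminus\overline{\Omega_c}})$ is closed (Poincaré's inequality on the bounded set $\Omega\setminus\overline{\Omega_c}$) and contained in $N(\interior\curl_{\Omega\setminus\overline{\Omega_c}})$, the projection theorem gives
\[
N\left(\interior\curl_{\Omega\setminus\overline{\Omega_c}}\right)=R\left(\interior\grad_{\Omega\setminus\overline{\Omega_c}}\right)\oplus\left(R\left(\interior\grad_{\Omega\setminus\overline{\Omega_c}}\right)^{\perp_{L^{2}(\Omega\setminus\overline{\Omega_c})}}\cap N\left(\interior\curl_{\Omega\setminus\overline{\Omega_c}}\right)\right),
\]
and since $\interior\grad_{\Omega\setminus\overline{\Omega_c}}^{*}=-\dive_{\Omega\setminus\overline{\Omega_c}}$, the orthogonal complement of the first summand is $N(\dive_{\Omega\setminus\overline{\Omega_c}})$, so the second summand equals $N(\dive_{\Omega\setminus\overline{\Omega_c}})\cap N(\interior\curl_{\Omega\setminus\overline{\Omega_c}})=\mathcal{H}_{D,\Omega\setminus\overline{\Omega_c}}$. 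Combined with the first paragraph this reads $R(\overset{\diamond}{\grad}_{\Omega\setminus\overline{\Omega_c}})=R(\interior\grad_{\Omega\setminus\overline{\Omega_c}})\oplus\mathcal{H}_{D,\Omega\setminus\overline{\Omega_c}}$, which is the claim.

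The step I expect to require the most care is the converse inclusion in the first paragraph, namely passing from ``$\interior\grad\varphi=0$ a.e. on $\Omega_c$'' to ``$\chi_{\Omega\setminus\overline{\Omega_c}}\varphi\in D(\overset{\diamond}{\grad}_{\Omega\setminus\overline{\Omega_c}})$'': this is where the structural assumptions on $\Omega_c$ from Hypothesis \ref{hyp:sigma} (finitely many connected components with disjoint closures and Lebesgue-null topological boundary) and the identity $N(\interior\curl)=R(\interior\grad)$ genuinely enter. Everything else amounts to keeping track of in which subdomain each differential operator, range, and orthogonal complement is taken, together with the extension-by-zero identifications supplied by Hypotheses \ref{hyp:rint} and \ref{hyp:rome}.
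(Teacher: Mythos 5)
Your proof is correct, but it takes a genuinely different route from the paper's. The paper proves the two inclusions of the stated identity directly: for $\mathcal{H}_{D,\Omega\setminus\overline{\Omega_{c}}}\subseteq R\bigl(\overset{\diamond}{\grad}_{\Omega\setminus\overline{\Omega_{c}}}\bigr)\ominus R\bigl(\interior\grad_{\Omega\setminus\overline{\Omega_{c}}}\bigr)$ it extends a Dirichlet field by zero to all of $\mathbb{R}^{3}$, takes a potential there, normalises the constant on the unbounded component, and invokes the first part of Hypothesis \ref{hyp:rome} to land in $D(\interior\grad)$; for the converse it corrects a given $\hat{\psi}\in D\bigl(\overset{\diamond}{\grad}_{\Omega\setminus\overline{\Omega_{c}}}\bigr)$ by an auxiliary $\psi_{0}$ constant near $\Omega_{c}$ and uses the second part of Hypothesis \ref{hyp:rome} to get $\hat{\psi}-\psi_{0}\in D(\interior\grad_{\Omega\setminus\overline{\Omega_{c}}})$. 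You instead first establish $R\bigl(\overset{\diamond}{\grad}_{\Omega\setminus\overline{\Omega_{c}}}\bigr)=N(\interior\curl_{\Omega\setminus\overline{\Omega_{c}}})$ --- a fact the paper only records \emph{after} the lemma, as a consequence of it --- by combining \eqref{eq:N(curl)_is_grad} with the first identity of Hypothesis \ref{hyp:rint}, and then peel off $\mathcal{H}_{D,\Omega\setminus\overline{\Omega_{c}}}$ via the projection-theorem decomposition of $N(\interior\curl_{\Omega\setminus\overline{\Omega_{c}}})$ already used in Lemma \ref{lem:decom}. Both arguments are sound; yours is shorter, reuses machinery already in place, and in fact does not use the first statement of Hypothesis \ref{hyp:rome} at all (and only the trivial inclusion of its second statement), so it shows the lemma holds under slightly weaker boundary-regularity assumptions, whereas the paper's whole-space argument makes explicit where the $\mathbb{R}^{3}$-characterisation of $D(\interior\grad)$ would enter. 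Your identification of the delicate step is also accurate: passing from $\interior\grad\varphi=0$ a.e.\ on $\Omega_{c}$ to constancy of $\varphi$ on each of the finitely many components of $\Omega_{c}$ is exactly where Hypothesis \ref{hyp:sigma} and the reading of ``constant on $\Omega_{c}$'' as ``constant on each component'' are needed; the paper's own proof implicitly adopts the same reading.
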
\begin{proof}Let
\[
\Phi\in\mathcal{H}{}_{D,\Omega\setminus\overline{\Omega_{c}}}\subseteq N(\interior\curl_{\Omega\setminus\overline{\Omega_{c}}})
\]
and by extension by zero $\Phi\in N\left(\curl_{\mathbb{R}^{3}}\right)$.
Thus 
\[
\Phi=\grad\psi
\]
in $L^{2,\mathrm{loc}}\left(\mathbb{R}^{3},\mathbb{R}^{3}\right)$
for some weakly differentiable $\psi$. Since $\Phi=0$ on $\mathbb{R}^{3}\setminus\left(\Omega\setminus\overline{\Omega_{c}}\right)$
we have that $\psi$ is constant on each component of $\mathbb{R}^{3}\setminus\left(\Omega\setminus\overline{\Omega_{c}}\right)$.
Adjusting this constant to be zero on the unbounded part $\mathbb{R}^{3}\setminus\Omega$
of $\mathbb{R}^{3}\setminus\left(\Omega\setminus\overline{\Omega_{c}}\right)$
we get a $\hat{\psi}\in D\left(\grad_{\mathbb{R}^{3}}\right)$ with
$\hat{\psi}$ constant on $\Omega_{c}$, $\hat{\psi}=0$ on $\mathbb{R}^{3}\setminus\overline{\Omega}$
and 
\[
\Phi=\grad_{\mathbb{R}^{3}}\hat{\psi}.
\]
By Assumption \ref{hyp:rome} we know that
\[
D\left(\interior\grad\right)=\left\{ \psi\in D\left(\grad_{\mathbb{R}^{3}}\right)\,|\,\psi=0\text{ on }\mathbb{R}^{3}\setminus\overline{\Omega}\right\} .
\]
Thus, 
\[
\Phi=\overset{\diamond}{\grad}_{\Omega\setminus\overline{\Omega_{c}}}\hat{\psi}.
\]
Since also $\dive_{\Omega\setminus\overline{\Omega_{c}}}\Phi=0$ we
have indeed shown that 
\begin{eqnarray*}
\mathcal{H}{}_{D,\Omega\setminus\overline{\Omega_{c}}} & \subseteq & R\left(\overset{\diamond}{\grad}_{\Omega\setminus\overline{\Omega_{c}}}\right)\cap N(\dive_{\Omega\setminus\overline{\Omega}_{c}})\\
 & = & R\left(\overset{\diamond}{\grad}_{\Omega\setminus\overline{\Omega_{c}}}\right)\cap R\left(\interior\grad_{\Omega\setminus\overline{\Omega_{c}}}\right)^{\bot_{L^{2}(\Omega\setminus\overline{\Omega_{c}})}}=R\left(\overset{\diamond}{\grad}_{\Omega\setminus\overline{\Omega_{c}}}\right)\ominus R\left(\interior\grad_{\Omega\setminus\overline{\Omega_{c}}}\right).
\end{eqnarray*}
Let now $\Phi=\overset{\diamond}{\grad}_{\Omega\setminus\overline{\Omega_{c}}}\hat{\psi}$
for some $\hat{\psi}\in D(\overset{\diamond}{\grad}_{\Omega\setminus\overline{\Omega_{c}}})$
and $\dive_{\Omega\setminus\overline{\Omega_{c}}}\Phi=0$. Let $\psi_{0}\in D(\interior\grad)$
an extension of $\hat{\psi}|_{\Omega_{c}}$ such that $\psi_{0}$
is constant in a neighbourhood of $\Omega_{c}$. Then, in particular,
$\hat{\psi}-\psi_{0}$ vanishes on $\Omega_{c}$, and so by Assumption
\ref{hyp:rome}
\[
\hat{\psi}-\psi_{0}\in D\left(\interior\grad_{\Omega\setminus\overline{\Omega_{c}}}\right).
\]
We have by construction that
\[
\dive_{\Omega\setminus\overline{\Omega_{c}}}\overset{\diamond}{\grad}_{\Omega\setminus\overline{\Omega_{c}}}\hat{\psi}=\dive_{\Omega\setminus\overline{\Omega_{c}}}\Phi=0
\]
and so
\[
\dive_{\Omega\setminus\overline{\Omega_{c}}}\interior\grad_{\Omega\setminus\overline{\Omega_{c}}}\left(\hat{\psi}-\psi_{0}\right)=-\dive_{\Omega\setminus\overline{\Omega_{c}}}\overset{\diamond}{\grad}_{\Omega\setminus\overline{\Omega_{c}}}\psi_{0}.
\]
Next, we first note that
\[
\interior\grad_{\Omega\setminus\overline{\Omega_{c}}}\left(\hat{\psi}-\psi_{0}\right)\in N\left(\interior\curl_{\Omega\setminus\overline{\Omega_{c}}}\right).
\]
Since also $\overset{\diamond}{\grad}_{\Omega\setminus\overline{\Omega_{c}}}\psi_{0}\in N\left(\curl_{\Omega\setminus\overline{\Omega_{c}}}\right)\cap N(\interior\curl)$
and since $\overset{\diamond}{\grad}_{\Omega\setminus\overline{\Omega_{c}}}\psi_{0}$
actually vanishes in a neighbourhood of $\Omega_{c}$ we also have
\[
\overset{\diamond}{\grad}_{\Omega\setminus\overline{\Omega_{c}}}\psi_{0}\in N\left(\interior\curl_{\Omega\setminus\overline{\Omega_{c}}}\right).
\]
Thus, 
\[
\Phi=\overset{\diamond}{\grad}_{\Omega\setminus\overline{\Omega_{c}}}\left(\hat{\psi}\right)=\interior\grad_{\Omega\setminus\overline{\Omega_{c}}}\left(\hat{\psi}-\psi_{0}\right)+\overset{\diamond}{\grad}_{\Omega\setminus\overline{\Omega_{c}}}\psi_{0}\in N\left(\interior\curl_{\Omega\setminus\overline{\Omega_{c}}}\right)
\]
and so 
\[
\Phi\in\mathcal{H}{}_{D,\Omega\setminus\overline{\Omega_{c}}}.
\]
This yields the converse inclusion.\end{proof}

 The latter lemma particularly implies 
\begin{eqnarray*}
H_{0}^{\perp_{L^{2}(\Omega)}} & = & N\left(\interior\curl_{\Omega\setminus\overline{\Omega_{c}}}\right)\\
 & = & R\left(\interior\grad_{\Omega\setminus\overline{\Omega_{c}}}\right)\oplus\mathcal{H}{}_{D,\Omega\setminus\overline{\Omega_{c}}}\\
 & = & R\left(\overset{\diamond}{\grad}_{\Omega\setminus\overline{\Omega_{c}}}\right),
\end{eqnarray*}
where we have used Lemma \ref{lem:decom} for the first equality.
Since, according to the projection theorem, the canonical embedding
\begin{eqnarray*}
\left(\begin{array}{cc}
\iota_{H_{0}} & \iota_{H_{0}^{\perp}}\end{array}\right):H_{0}\oplus H_{0}^{\perp_{L^{2}(\Omega)}} & \to & L^{2}(\Omega,\mathbb{R}^{3})\\
\left(\begin{array}{c}
x_{0}\\
x_{1}
\end{array}\right) & \mapsto & x_{0}+x_{1}
\end{eqnarray*}
is unitary we have its adjoint
\[
\left(\begin{array}{c}
\iota_{H_{0}}^{*}\\
\iota_{H_{0}^{\perp}}^{*}
\end{array}\right):L^{2}\left(\Omega,\mathbb{R}^{3}\right)\to H_{0}\oplus H_{0}^{\perp_{L^{2}(\Omega)}}
\]
as the inverse. Thus, we may consider equivalently
\begin{eqnarray*}
W\left(\begin{array}{cc}
\left(\begin{array}{cc}
\partial_{0}\sigma+\curl\interior\curl & 0\\
0 & 0
\end{array}\right) & \left(\begin{array}{c}
0\\
\overset{\diamond}{\grad}_{\Omega\setminus\overline{\Omega_{c}}}
\end{array}\right)\\
\left(\begin{array}{cc}
0 & \overset{\diamond}{\dive}_{\Omega\setminus\overline{\Omega_{c}}}\end{array}\right) & 0
\end{array}\right)W^{*}=\\
=\left(\begin{array}{cc}
\partial_{0}\sigma+\curl\interior\curl & \overset{\diamond}{\grad}_{\Omega\setminus\overline{\Omega_{c}}}\\
\overset{\diamond}{\dive}_{\Omega\setminus\overline{\Omega_{c}}} & 0
\end{array}\right)
\end{eqnarray*}
now on $L^{2}\left(\Omega,\mathbb{R}^{3}\right)\oplus L^{2}\left(\Omega\setminus\overline{\Omega_{c}},\mathbb{R}\right)$
as underlying Hilbert space with the unitary map
\[
W=\left(\begin{array}{cc}
\left(\begin{array}{cc}
\iota_{H_{0}} & \iota_{H_{0}^{\perp}}\end{array}\right) & 0\\
\left(\begin{array}{cc}
0_{H_{0}} & 0_{H_{0}^{\perp}}\end{array}\right) & 1
\end{array}\right).
\]
Thus, we are led to discuss equations of the form
\[
\left(\begin{array}{cc}
\partial_{0}\sigma+\curl\interior\curl & \overset{\diamond}{\grad}_{\Omega\setminus\overline{\Omega_{c}}}\\
\overset{\diamond}{\dive}_{\Omega\setminus\overline{\Omega_{c}}} & 0
\end{array}\right)\left(\begin{array}{c}
E\\
p
\end{array}\right)=\left(\begin{array}{c}
f\\
0
\end{array}\right).
\]
From this ``saddle point formulation'' we can recover $E$ as the
solution of
\begin{equation}
\partial_{0}\sigma E+\curl\interior\curl E=\iota_{H_{0}}^{\ast}f.\label{eq:pre-Max-1}
\end{equation}
Indeed, we have the following result.

\begin{thm} Assume $\Omega$ to be open and bounded with connected
boundary. Moreover, let the Assumptions \ref{hyp:sigma}, \ref{hyp:cr},
\ref{hyp:rint}, and \ref{hyp:rome} be in effect. Then the (closure
of the) operator 
\[
\left(\begin{array}{cc}
\left(\begin{array}{cc}
\partial_{0}\sigma+\curl\interior\curl & 0\\
0 & 0
\end{array}\right) & \left(\begin{array}{c}
0\\
\overset{\diamond}{\grad}_{\Omega\setminus\overline{\Omega_{c}}}
\end{array}\right)\\
\left(\begin{array}{cc}
0 & \overset{\diamond}{\dive}_{\Omega\setminus\overline{\Omega_{c}}}\end{array}\right) & 0
\end{array}\right)
\]
 is continuously invertible in $H_{\rho,0}(\mathbb{R},H_{0}\oplus H_{0}^{\bot}\oplus L^{2}\left(\Omega\setminus\overline{\Omega_{c}},\mathbb{R}\right))$
for sufficiently large $\rho>0$.

\end{thm}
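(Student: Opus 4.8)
The strategy is to observe that, with respect to the orthogonal decomposition
\[
H_{0}\oplus H_{0}^{\bot_{L^{2}(\Omega)}}\oplus L^{2}(\Omega\setminus\overline{\Omega_{c}},\mathbb{R}),
\]
the operator in question is \emph{block-diagonal}. Indeed, writing out its action on a triple $(x_{0},x_{1},p)$, the $H_{0}$-component is $(\partial_{0}\eta_{0}+C_{0}^{*}C_{0})x_{0}$ (with $\eta_{0}=\iota_{H_{0}}^{*}\sigma\iota_{H_{0}}$ and $C_{0}^{*}C_{0}=\iota_{H_{0}}^{*}\curl\interior\curl\,\iota_{H_{0}}$, since $\mu=1$ here), the $H_{0}^{\bot}$-component is $\overset{\diamond}{\grad}_{\Omega\setminus\overline{\Omega_{c}}}p$, and the $L^{2}(\Omega\setminus\overline{\Omega_{c}},\mathbb{R})$-component is $\overset{\diamond}{\dive}_{\Omega\setminus\overline{\Omega_{c}}}x_{1}$. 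Thus the operator decouples into the time-dependent part $\partial_{0}\eta_{0}+C_{0}^{*}C_{0}$ acting in $H_{0}$ and the static skew block
\[
\mathcal{B}\coloneqq\begin{pmatrix}0 & \overset{\diamond}{\grad}_{\Omega\setminus\overline{\Omega_{c}}}\\ \overset{\diamond}{\dive}_{\Omega\setminus\overline{\Omega_{c}}} & 0\end{pmatrix}
\]
acting in $H_{0}^{\bot_{L^{2}(\Omega)}}\oplus L^{2}(\Omega\setminus\overline{\Omega_{c}},\mathbb{R})$. Since the closure of a finite block-diagonal operator is the orthogonal direct sum of the closures of the diagonal entries, and such a direct sum is continuously invertible precisely when each entry is, it suffices to treat the two blocks separately.

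For the first block, the Hypotheses are in force, so Proposition \ref{prop:setass} guarantees that $C$, $\eta$ from \eqref{eq:setting} meet the assumptions of Theorem \ref{thm:Solution}; in particular \eqref{eq:assume} holds. Consequently (see Theorem \ref{thm:mr}, equivalently the proposition stated right after \eqref{eq:assume}) the operator $\overline{\partial_{0}\eta_{0}+C_{0}^{*}C_{0}}$ is continuously invertible in $H_{\rho,0}(\mathbb{R},H_{0})$ for all sufficiently large $\rho>0$ (in fact $\rho\geq1$).

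For the second block, the plan is to show that both $\overset{\diamond}{\grad}_{\Omega\setminus\overline{\Omega_{c}}}$ and $\overset{\diamond}{\dive}_{\Omega\setminus\overline{\Omega_{c}}}$ are topological isomorphisms between their (Hilbert) domains and codomains. The operator $\overset{\diamond}{\grad}_{\Omega\setminus\overline{\Omega_{c}}}$ is densely defined — its domain contains $D(\interior\grad_{\Omega\setminus\overline{\Omega_{c}}})$, which is dense by Hypothesis \ref{hyp:rome} — and injective, since $\interior\grad$ is; it is moreover closed, because $\interior\grad$ is closed with closed range (Poincaré's inequality), hence boundedly invertible on its range, so that convergence of the images $\interior\grad\varphi_{n}$ forces convergence of the $\varphi_{n}$, while the constraint ``constant on $\Omega_{c}$'' is a closed condition. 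By the lemma immediately preceding the present theorem (and the consequence drawn from it there),
\[
R\left(\overset{\diamond}{\grad}_{\Omega\setminus\overline{\Omega_{c}}}\right)=R\left(\interior\grad_{\Omega\setminus\overline{\Omega_{c}}}\right)\oplus\mathcal{H}_{D,\Omega\setminus\overline{\Omega_{c}}}=N\left(\interior\curl_{\Omega\setminus\overline{\Omega_{c}}}\right)=H_{0}^{\bot_{L^{2}(\Omega)}},
\]
which is closed. A closed, injective operator whose range is closed and equals the whole codomain possesses a bounded inverse by the closed graph theorem; thus $\big(\overset{\diamond}{\grad}_{\Omega\setminus\overline{\Omega_{c}}}\big)^{-1}\colon H_{0}^{\bot_{L^{2}(\Omega)}}\to L^{2}(\Omega\setminus\overline{\Omega_{c}},\mathbb{R})$ is bounded. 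Since $\overset{\diamond}{\dive}_{\Omega\setminus\overline{\Omega_{c}}}=-\overset{\diamond}{\grad}_{\Omega\setminus\overline{\Omega_{c}}}^{*}$ and $\overset{\diamond}{\grad}_{\Omega\setminus\overline{\Omega_{c}}}$ is injective with closed range that is (trivially) dense in its codomain $H_{0}^{\bot_{L^{2}(\Omega)}}$, the adjoint is likewise injective, onto $L^{2}(\Omega\setminus\overline{\Omega_{c}},\mathbb{R})$, and boundedly invertible. Hence $\mathcal{B}$ is continuously invertible, with inverse $(g,h)\mapsto\big(\big(\overset{\diamond}{\dive}_{\Omega\setminus\overline{\Omega_{c}}}\big)^{-1}h,\big(\overset{\diamond}{\grad}_{\Omega\setminus\overline{\Omega_{c}}}\big)^{-1}g\big)$; being time-independent it commutes with $\partial_{0}$, so its canonical lift to $H_{\rho,0}(\mathbb{R},\cdot)$ is continuously invertible with the same bound for every $\rho>0$.

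Combining the two blocks — and, if the saddle-point form is preferred, conjugating with the unitary $W$ as already remarked — yields continuous invertibility of the closure of the full operator in $H_{\rho,0}(\mathbb{R},H_{0}\oplus H_{0}^{\bot}\oplus L^{2}(\Omega\setminus\overline{\Omega_{c}},\mathbb{R}))$ for all sufficiently large $\rho>0$. The only step that requires genuine care is the static block: its real content is the identity $R(\overset{\diamond}{\grad}_{\Omega\setminus\overline{\Omega_{c}}})=H_{0}^{\bot}$ — exactly what the preceding lemma delivers — after which the isomorphism property for $\overset{\diamond}{\grad}_{\Omega\setminus\overline{\Omega_{c}}}$ and for its adjoint is standard closed-range/closed-graph bookkeeping.
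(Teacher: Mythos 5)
Your proof is correct and follows essentially the same route as the paper's: decouple the $H_{0}$-block (handled by Theorem \ref{thm:mr} via Proposition \ref{prop:setass}) from the static skew block, whose continuous invertibility rests on the identity $R\big(\overset{\diamond}{\grad}_{\Omega\setminus\overline{\Omega_{c}}}\big)=H_{0}^{\perp}$ supplied by the preceding lemma together with injectivity and closed-range arguments for $\overset{\diamond}{\grad}_{\Omega\setminus\overline{\Omega_{c}}}$ and its adjoint. You merely make explicit the closedness of $\overset{\diamond}{\grad}_{\Omega\setminus\overline{\Omega_{c}}}$ and the closed-graph bookkeeping that the paper leaves implicit.
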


\begin{proof} Note that since $\Omega$ is open and bounded, we infer
by Poincare's inequality that $R\left(\interior\grad\right)$ is closed.
This implies that $R\left(\overset{\diamond}{\grad}_{\Omega\setminus\overline{\Omega_{c}}}\right)$
is closed as well as the range $R\left(\overset{\diamond}{\dive}_{\Omega\setminus\overline{\Omega_{c}}}\right)$
of its adjoint $-\stackrel{\diamond}{\dive}_{\Omega\setminus\overline{\Omega_{c}}}$.
This makes 
\[
\left(\begin{array}{cc}
0 & \overset{\diamond}{\grad}_{\Omega\setminus\overline{\Omega_{c}}}\\
\overset{\diamond}{\dive}_{\Omega\setminus\overline{\Omega_{c}}} & 0
\end{array}\right)
\]
continuously invertible on $R\left(\overset{\diamond}{\grad}_{\Omega\setminus\overline{\Omega_{c}}}\right)\oplus R\left(\overset{\diamond}{\dive}_{\Omega\setminus\overline{\Omega_{c}}}\right)$.
Moreover, it is a consequence of the above lemma that
\[
R\left(\overset{\diamond}{\grad}_{\Omega\setminus\overline{\Omega_{c}}}\right)=H_{0}^{\perp_{L^{2}(\Omega)}}.
\]
Furthermore, since $\interior\grad$ is injective, we infer that
\[
N\left(\overset{\diamond}{\grad}_{\Omega\setminus\overline{\Omega_{c}}}\right)=\{0\},
\]
which, thus, implies that 
\[
R\left(\overset{\diamond}{\dive}_{\Omega\setminus\overline{\Omega_{c}}}\right)=L^{2}\left(\Omega\setminus\overline{\Omega_{c}},\mathbb{R}\right).
\]
Hence, we infer the claim of the theorem by the well-posedness result
from Theorem \ref{thm:mr}.\end{proof}

The solution $\left(E,p\right)$ of the extended system now yields
indeed a solution $E$ of the pre-Maxwell system \eqref{eq:pre-Max-1}.
If $f\in H_{0}$ we have of course $f=P_{H}{}_{0}f$ and $p=0$.

\begin{rem}For numerical purposes approximations of the equation
$\overset{\diamond}{\dive}_{\Omega\setminus\overline{\Omega_{c}}}E=0$
would be based on its 'weak' form 
\[
\left\langle \overset{\diamond}{\grad}_{\Omega\setminus\overline{\Omega_{c}}}\psi|E\right\rangle _{L^{2}\left(\Omega\setminus\overline{\Omega_{c}},\mathbb{R}^{3}\right)}=0,
\]
so that $E$ could be approximated in suitable finite-dimensional
subspaces of $D\left(\interior\curl\right)$.\end{rem}

\section{Justification of the Pre-Maxwell System.}

We conclude our considerations with a justification of the pre-Maxwell
system; that is, the degenerate eddy current problem\footnote{For the non-degenerate eddy current problem this has been given in
the current functional analytical setting in \cite{MMA:MMA4515,Wau16}
in both the autonomous and non-autonomous cases, respectively.}, as an approximation of Maxwell's system (including the displacement
current). The system of Maxwell's equations reads as
\begin{eqnarray*}
\partial_{0}\epsilon\mathrm{E}+\sigma\mathrm{E}-\curl\mathrm{H} & = & -\mathrm{J,}\\
\partial_{0}\mu\mathrm{H}+\interior\curl\mathrm{E} & = & \mathrm{K},
\end{eqnarray*}
where $K$ denotes a magnetic source term (perhaps induced by initial
data for $H$) and $\epsilon\in]0,\infty[$. Throughout, let $\rho\geq1$.
The question is if and in which sense do the solutions converge to
the solutions of the degenerate eddy current problem as $\epsilon$
tends to $0$. For this transition we restrict our attention to current
densities $J$ in the correct subspace for the limit problem $\epsilon=0$,
i.e.
\[
J\in H_{\rho,0}(\mathbb{R},H_{0}).
\]

Again, as before, we shall assume that $\Omega$ is open, bounded
with connected boundary. Furthermore, we shall assume throughout that
the Assumptions \ref{hyp:sigma}, \ref{hyp:cr}, \ref{hyp:rint} are
in effect. We shall furthermore note that a standard application of
Theorem \ref{-Solution-0} leads to 
\[
\tilde{S}_{\varepsilon}\coloneqq\left(\overline{\left(\partial_{0}\left(\begin{array}{cc}
\epsilon & 0\\
0 & \mu
\end{array}\right)+\left(\begin{array}{cc}
\sigma & 0\\
0 & 0
\end{array}\right)+\left(\begin{array}{cc}
0 & -\curl\\
\interior\curl & 0
\end{array}\right)\right)}\right)^{-1}\in L(H_{\rho,k}(\mathbb{R};L^{2}(\Omega,\mathbb{R}^{6})))
\]
for \emph{every} $\rho>0$ and $k\in\mathbb{Z}$. Here and in the
following we use $\left|\:\cdot\:\right|_{\rho,k,0}$ as the notation
for the norm corresponding to the Hilbert space inner product induced
by $\left\langle \:\cdot\:|\:\cdot\:\right\rangle _{\rho,k,0}\coloneqq\left\langle \partial_{0}^{k}\:\cdot\:|\partial_{0}^{k}\:\cdot\:\right\rangle _{\rho,0,0}$.
$H_{\rho,k}\left(\mathbb{R},L^{2}(\Omega,\mathbb{R}^{6})\right)$
denotes the Hilbert space obtained by completion. We denote 
\[
S_{0}\coloneqq\left(\overline{\partial_{0}\sigma+\curl\mu^{-1}\interior\curl}\right)^{-1}\in L(H_{\rho,0}(\mathbb{R},H_{0}),H_{\rho,0}(\mathbb{R},D(\interior\curl))
\]
 for some fixed sufficiently large $\rho>0$. Furthermore, we define
for all $\varepsilon>0$
\[
S_{\varepsilon}\coloneqq\pi_{1}\tilde{S}_{\varepsilon},
\]
where $\pi_{1}(\mathrm{E},\mathrm{H})=\mathrm{E}$ reads off the first
three components of a 6-component vector field. Assuming 
\[
\curl\mu^{-1}\partial_{0}^{-1}\mathrm{K}\in H_{\rho,k}\left(\mathbb{R},L^{2}\left(\Omega,\mathbb{R}^{3}\right)\right)
\]
the simple substitution 
\[
\mathrm{H}=\mu^{-1}\partial_{0}^{-1}\mathrm{K}-\mu^{-1}\interior\curl\partial_{0}^{-1}\mathrm{E}
\]
leads to $S_{\varepsilon}(\mathrm{J},\mathrm{K})=\mathrm{E}$ being
the unique solution of 
\[
\partial_{0}\epsilon\mathrm{E}+\sigma\mathrm{E}+\curl\mu^{-1}\interior\curl\partial_{0}^{-1}\mathrm{E}=-\mathrm{J}+\curl\mu^{-1}\partial_{0}^{-1}\mathrm{K}.
\]
By a slight abuse of notation, we shall view $S_{\epsilon}$ as a
mapping from $H_{\rho,0}(\mathbb{R};L^{2}(\Omega,\mathbb{R}^{3}))$
into itself. Thus, instead of $S_{\varepsilon}(\mathrm{J},\mathrm{K})$
we shall write $S_{\varepsilon}(-\mathrm{J}+\curl\mu^{-1}\partial_{0}^{-1}\mathrm{K})$.
This provides a second order formulation, which we actually can compare
with the degenerate equation. Due to the particular structure of the
right-hand side, we furthermore remark here that $f=-\mathrm{J}+\curl\mu^{-1}\partial_{0}^{-1}\mathrm{K}$
takes values in $H_{0}$ if and only if $J$ does. The main result
of this section reads as follows.

\begin{thm}\label{thm:mrjust} For all $k\in\mathbb{Z}$ and $f\in H_{\rho,k}(\mathbb{R};H_{0})$
we have
\[
\left|S_{\epsilon}f-S_{0}f\right|_{\rho,k-2,0}\to0
\]
as $\varepsilon\to0$.

\end{thm}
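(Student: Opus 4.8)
The plan is to write the error $E_{\varepsilon}-E_{0}$, with $E_{\varepsilon}:=S_{\varepsilon}f$ and $E_{0}:=S_{0}f$, as $\varepsilon$ times $S_{\varepsilon}$ applied to a \emph{fixed} (and sufficiently regular) datum, and then to exploit that $\varepsilon S_{\varepsilon}\to0$ \emph{strongly} on $H_{\rho,0}(\mathbb{R};H_{0})$ -- although, crucially, not in operator norm. First I would record two preliminaries, using the reduced data $\eta_{0},C_{0}$ of Section \ref{sec:A-Class-of}. (i) $E_{\varepsilon}$ is $H_{0}$-valued: projecting the second order pre-Maxwell equation $\partial_{0}\varepsilon E_{\varepsilon}+\sigma E_{\varepsilon}+\curl\mu^{-1}\interior\curl\partial_{0}^{-1}E_{\varepsilon}=f$ onto $H_{0}^{\bot}=N(\interior\curl_{\Omega\setminus\overline{\Omega_{c}}})$ kills the last three terms, since $R(\sigma),R(\curl)\subseteq H_{0}$ and $f$ is $H_{0}$-valued; hence $\varepsilon\partial_{0}P_{H_{0}^{\bot}}E_{\varepsilon}=0$, so $P_{H_{0}^{\bot}}E_{\varepsilon}=0$ by injectivity of $\partial_{0}$. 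By the reduction lemmas of Section \ref{sec:A-Class-of}, $E_{\varepsilon}$ then solves $\varepsilon\partial_{0}E_{\varepsilon}+\eta_{0}E_{\varepsilon}+C_{0}^{\ast}C_{0}\partial_{0}^{-1}E_{\varepsilon}=f$ and is $H_{0}\cap D(\interior\curl)$-valued, while $E_{0}$ solves the $\varepsilon=0$ specialisation $\eta_{0}E_{0}+C_{0}^{\ast}C_{0}\partial_{0}^{-1}E_{0}=f$. (ii) $\partial_{0}$ commutes with $S_{\varepsilon}$ and with $S_{0}$, and $\partial_{0}^{k}f\in H_{\rho,0}(\mathbb{R};H_{0})$ whenever $f\in H_{\rho,k}(\mathbb{R};H_{0})$, so the $\partial_{0}^{k}$-shift is harmless and it is enough to treat data in $H_{\rho,0}(\mathbb{R};H_{0})$.

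Next I would subtract the two equations in (i): $(\eta_{0}+C_{0}^{\ast}C_{0}\partial_{0}^{-1})(E_{\varepsilon}-E_{0})=-\varepsilon\partial_{0}E_{\varepsilon}$. Since $\partial_{0}\eta_{0}+C_{0}^{\ast}C_{0}=\partial_{0}(\eta_{0}+C_{0}^{\ast}C_{0}\partial_{0}^{-1})$ and $S_{0}=(\overline{\partial_{0}\eta_{0}+C_{0}^{\ast}C_{0}})^{-1}$ commutes with $\partial_{0}$ and with $S_{\varepsilon}$, this gives $E_{\varepsilon}-E_{0}=-\varepsilon\partial_{0}^{2}S_{0}E_{\varepsilon}=-\varepsilon S_{\varepsilon}(\partial_{0}^{2}S_{0}f)$, hence the key identity
\[
\partial_{0}^{-2}(E_{\varepsilon}-E_{0})=-\varepsilon\,S_{\varepsilon}(S_{0}f),\qquad\text{i.e.}\qquad|E_{\varepsilon}-E_{0}|_{\rho,-2,0}=\varepsilon\,|S_{\varepsilon}(S_{0}f)|_{\rho,0,0}.
\]
By Theorem \ref{thm:Solution} (equivalently Theorem \ref{thm:mr} with Remark \ref{rem:reg}), $S_{0}f\in H_{\rho,0}(\mathbb{R};D(C_{0}))\subseteq H_{\rho,0}(\mathbb{R};H_{0})$; with the $\partial_{0}^{k}$-shift the same computation yields $|E_{\varepsilon}-E_{0}|_{\rho,k-2,0}=\varepsilon\,|S_{\varepsilon}(S_{0}\partial_{0}^{k}f)|_{\rho,0,0}$. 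So everything reduces to showing $\varepsilon S_{\varepsilon}h\to0$ in $H_{\rho,0}(\mathbb{R};H_{0})$, and it costs nothing to prove this for \emph{all} $h\in H_{\rho,0}(\mathbb{R};H_{0})$.

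For this I would use that testing the equation for $E_{\varepsilon}^{(h)}:=S_{\varepsilon}h$ with $E_{\varepsilon}^{(h)}$ (and taking real parts, exactly as in the proof of Theorem \ref{-Solution-0}) gives
\[
\varepsilon\rho\,|E_{\varepsilon}^{(h)}|_{\rho,0,0}^{2}+|\eta_{0}^{1/2}E_{\varepsilon}^{(h)}|_{\rho,0,0}^{2}+\rho\,|\partial_{0}^{-1}C_{0}E_{\varepsilon}^{(h)}|_{\rho,0,0}^{2}=\Re\langle E_{\varepsilon}^{(h)}\,|\,h\rangle_{\rho,0,0},
\]
whence the uniform bound $\|\varepsilon S_{\varepsilon}\|_{L(H_{\rho,0}(\mathbb{R};H_{0}))}\le\rho^{-1}$. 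By this bound and density of $H_{\rho,1}(\mathbb{R};H_{0})$ in $H_{\rho,0}(\mathbb{R};H_{0})$ it suffices to treat $h\in H_{\rho,1}(\mathbb{R};H_{0})$. Writing $w_{\varepsilon}:=\varepsilon S_{\varepsilon}h=\varepsilon E_{\varepsilon}^{(h)}$, the displayed identity bounds $|\eta_{0}^{1/2}w_{\varepsilon}|_{\rho,0,0}^{2}\le\varepsilon\rho^{-1}|h|_{\rho,0,0}^{2}$ and $|\partial_{0}^{-1}C_{0}w_{\varepsilon}|_{\rho,0,0}^{2}\le\varepsilon\rho^{-2}|h|_{\rho,0,0}^{2}$; feeding these into inequality \eqref{eq:assume} read off in the $H_{\rho,-1,0}$-norm (it is pointwise in the spatial variable, hence lifts to every $H_{\rho,j,0}$) gives $|w_{\varepsilon}|_{\rho,-1,0}\le C\sqrt{\varepsilon}\,|h|_{\rho,0,0}$. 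Multiplying the energy identity by $\varepsilon$ and estimating its right-hand side through the $H_{\rho,-1,0}$--$H_{\rho,1,0}$ duality,
\[
\rho\,|w_{\varepsilon}|_{\rho,0,0}^{2}\le\Re\langle w_{\varepsilon}\,|\,h\rangle_{\rho,0,0}\le C'\,|w_{\varepsilon}|_{\rho,-1,0}\,|h|_{\rho,1,0}\le C''\sqrt{\varepsilon}\,|h|_{\rho,0,0}\,|h|_{\rho,1,0}\longrightarrow0\quad(\varepsilon\to0).
\]
Combined with the uniform bound this gives $\varepsilon S_{\varepsilon}\to0$ strongly, and then the key identity finishes the proof.

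The step I expect to be the main obstacle is exactly this passage from the uniform bound $\|\varepsilon S_{\varepsilon}\|=O(1)$ to strong convergence. The operator norm of $S_{\varepsilon}$ genuinely blows up like $(\rho\varepsilon)^{-1}$: on the undamped directions $R(C_{0}^{\ast})\cap N(\eta_{0})$ -- where $\eta_{0}$ vanishes while $C_{0}^{\ast}C_{0}\partial_{0}^{-1}$ is spectrally small at high temporal frequency -- the symbol $\varepsilon(\mathrm{i}t+\rho)+C_{0}^{\ast}C_{0}(\mathrm{i}t+\rho)^{-1}$ has norm comparable to $\varepsilon$, so no rate is available uniformly in the datum and one is forced into the two-norm argument above (gain a factor $\sqrt{\varepsilon}$ in the weaker norm via \eqref{eq:assume}, bootstrap back to $|\cdot|_{\rho,0,0}$ by duality on a dense subspace, then invoke the uniform bound). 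A secondary but necessary point -- needed both to justify the energy identity and to apply \eqref{eq:assume}/\eqref{eq:pos-def-eddy} to $w_{\varepsilon}$ -- is the regularity $S_{\varepsilon}h\in H_{\rho,0}(\mathbb{R};H_{0}\cap D(\interior\curl))$, which is where observation (i) and the reduction lemmas of Section \ref{sec:A-Class-of} enter.
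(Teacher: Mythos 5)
Your proof is correct, and it shares the paper's overall skeleton: the resolvent identity expressing $S_{\epsilon}f-S_{0}f$ as $\epsilon S_{\epsilon}$ applied to a fixed, $\varepsilon$-independent element (your version carries an extra $\partial_{0}$ because you normalise $S_{0}$ as the inverse of $\sigma+\curl\mu^{-1}\interior\curl\partial_{0}^{-1}$, consistently with $S_{\varepsilon}$, which in fact repairs a small notational inconsistency in the paper's definition of $S_{0}$), the energy identity obtained by testing with the solution, the coercivity \eqref{eq:assume}/\eqref{eq:pos-def-eddy} applied in a temporally weaker norm, and a closing density argument. The key intermediate statement is, however, genuinely different. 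The paper's Lemma \ref{lem:mrjust} proves the uniform operator bound $\sup_{\varepsilon>0}\|S_{\varepsilon}\|_{H_{\rho,k}(\mathbb{R},H_{0})\to H_{\rho,k-2}(\mathbb{R},H_{0})}\leq 1/c_{0}$, i.e.\ uniform boundedness at the cost of two time derivatives; combined with the resolvent identity this gives the clean rate $\left|S_{\epsilon}f-S_{0}f\right|_{\rho,k-2,0}\leq C\epsilon\left|f\right|_{\rho,k+1,0}$ on the dense subspace, and the theorem follows by density together with that same uniform bound. You instead prove strong convergence $\varepsilon S_{\varepsilon}\to0$ on the single space $H_{\rho,0}(\mathbb{R},H_{0})$, via the elementary uniform bound $\|\varepsilon S_{\varepsilon}\|\leq\rho^{-1}$ plus a two-norm bootstrap (a $\sqrt{\varepsilon}$ gain in $\left|\:\cdot\:\right|_{\rho,-1,0}$ from \eqref{eq:assume}, then duality against $\left|h\right|_{\rho,1,0}$). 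This buys a slightly more economical endgame -- the density in $f$ is absorbed into the strong convergence, since $S_{0}\partial_{0}^{k}f$ automatically lies in $H_{\rho,0}(\mathbb{R},H_{0})$ -- but it only yields the rate $O(\varepsilon^{1/4})$ on the dense subspace, versus the paper's $O(\varepsilon)$. Your diagnosis that no uniform bound on $\|S_{\varepsilon}\|_{L(H_{\rho,0})}$ is available (degeneration on $N(\eta_{0})\cap R(C_{0}^{*})$ at high temporal frequency) is precisely the reason the paper measures the output two time derivatives lower. The only caveats are ones you flag yourself: the energy identity as an equality and the pointwise-in-time application of \eqref{eq:assume} to $\partial_{0}^{-1}w_{\varepsilon}$ require the regularity $S_{\varepsilon}h\in H_{\rho,0}(\mathbb{R},H_{0}\cap D(\interior\curl))$, obtained by the approximation of Remark \ref{rem:reg}; the paper's own proof of Lemma \ref{lem:mrjust} glosses over this at exactly the same level.
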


Before proving this result, we provide the following an auxiliary
result.

\begin{lem}\label{lem:mrjust} For all $k\in\mathbb{Z}$ , we have
\[
\sup_{\varepsilon>0}\|S_{\varepsilon}\|_{H_{\rho,k}(\mathbb{R},H_{0})\to H_{\rho,k-2}(\mathbb{R},H_{0})}<\infty.
\]

\end{lem}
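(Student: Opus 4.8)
The plan is to prove the slightly stronger and cleaner bound $\|S_\varepsilon\|_{H_{\rho,1}(\mathbb{R},H_0)\to H_{\rho,-1}(\mathbb{R},H_0)}\le 1/c_1$, uniformly in $\varepsilon>0$, where $c_1$ is the constant from \eqref{eq:assume}; since $S_\varepsilon$ commutes with $\partial_0$ (directly from the defining equation), applying $\partial_0^{k-1}$ then reduces the general statement $H_{\rho,k}\to H_{\rho,k-2}$ to this case. Throughout, $E\coloneqq S_\varepsilon f$ solves $\partial_0\varepsilon E+\sigma E+C^{\ast}C\partial_0^{-1}E=f$ with $C=\mu^{-1/2}\interior\curl$, so that $C^{\ast}C=\curl\mu^{-1}\interior\curl$ and $C^{\ast}=\curl\mu^{-1/2}$.

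First I would record that $E$ is $H_0$-valued whenever $f$ is: $\sigma E\in\overline{R(\sigma)}=N(\sigma)^{\perp}\subseteq H_0$ and $C^{\ast}C\partial_0^{-1}E\in R(C^{\ast})=R(\curl)=N(\interior\curl)^{\perp}\subseteq H_0$, so projecting the equation onto $H_0^{\perp}=N(\sigma)\cap N(\interior\curl)$ leaves $\partial_0\varepsilon\,P_{H_0^{\perp}}E=0$; as $\varepsilon>0$ is a scalar and $\partial_0$ is injective on $H_{\rho,0}$, this forces $P_{H_0^{\perp}}E=0$, and likewise $\partial_0^{-1}E$ is $H_0$-valued, so \eqref{eq:assume} is applicable to it.

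The heart of the matter is an energy identity. For $f$ in a dense subset of $H_{\rho,1}(\mathbb{R},H_0)$ rendering $E$ regular enough (obtained by mollification in time exactly as in Remark \ref{rem:reg}, using that for each fixed $\varepsilon$ the operator $S_\varepsilon$ is bounded on $H_{\rho,1}$), pairing the equation with $E$ in $H_{\rho,0}(\mathbb{R},H_0)$ and using $\langle E|\partial_0\varepsilon E\rangle_{\rho,0,0}=\varepsilon\rho|E|_{\rho,0,0}^2$, $\langle E|\sigma E\rangle_{\rho,0,0}=|\sigma^{1/2}E|_{\rho,0,0}^2$, and $\langle E|C^{\ast}C\partial_0^{-1}E\rangle_{\rho,0,0}=\langle CE|C\partial_0^{-1}E\rangle_{\rho,0,0}=\langle\partial_0(C\partial_0^{-1}E)|C\partial_0^{-1}E\rangle_{\rho,0,0}=\rho|C\partial_0^{-1}E|_{\rho,0,0}^2$ ($C$ commutes with $\partial_0$, and one integrates by parts in time), one obtains
\[
\varepsilon\rho|E|_{\rho,0,0}^2+|\sigma^{1/2}E|_{\rho,0,0}^2+\rho|C\partial_0^{-1}E|_{\rho,0,0}^2=\langle E|f\rangle_{\rho,0,0}.
\]
Dropping the first, non-negative term and using $\rho\ge 1$ gives $|\sigma^{1/2}E|_{\rho,0,0}^2+|C\partial_0^{-1}E|_{\rho,0,0}^2\le\langle E|f\rangle_{\rho,0,0}$.

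Finally I would feed \eqref{eq:assume} into this. Applying \eqref{eq:assume} to the $H_0$-valued function $\partial_0^{-1}E$ (so $C_0\partial_0^{-1}E=C\partial_0^{-1}E$), and using $|\partial_0^{-1}E|_{\rho,0,0}=|E|_{\rho,-1,0}$ together with $|\sigma^{1/2}\partial_0^{-1}E|_{\rho,0,0}=|\sigma^{1/2}E|_{\rho,-1,0}\le|\sigma^{1/2}E|_{\rho,0,0}$ (since $\|\partial_0^{-1}\|\le\rho^{-1}\le 1$), we get
\[
c_1|E|_{\rho,-1,0}^2\le|\sigma^{1/2}E|_{\rho,0,0}^2+|C\partial_0^{-1}E|_{\rho,0,0}^2\le\langle E|f\rangle_{\rho,0,0}\le|E|_{\rho,-1,0}\,|f|_{\rho,1,0},
\]
the last step being the $H_{\rho,-1}$–$H_{\rho,1}$ duality pairing (Remark \ref{Dual-space}; concretely $\langle E|f\rangle_{\rho,0,0}=\langle\,|\partial_0|^{-1}E\,|\,|\partial_0|f\,\rangle_{\rho,0,0}$ as $\partial_0$ is normal, followed by Cauchy–Schwarz). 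Hence $|S_\varepsilon f|_{\rho,-1,0}\le c_1^{-1}|f|_{\rho,1,0}$ uniformly in $\varepsilon>0$, and the claim follows by density and by applying $\partial_0^{k-1}$. The only genuinely delicate point is justifying the energy identity for the a priori merely weak solution $E$ uniformly in $\varepsilon$; this is routine given the boundedness of each $S_\varepsilon$ on $H_{\rho,1}$ and is handled precisely as in Remark \ref{rem:reg}. Everything downstream of the identity is soft.
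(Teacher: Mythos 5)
Your proposal is correct and follows essentially the same route as the paper: project onto $H_0^{\perp}$ to see $E$ is $H_0$-valued, test the second-order equation with $E$, integrate by parts in time to get the energy identity, discard the non-negative $\varepsilon$-term, and feed the spatial coercivity estimate (the paper uses its concrete form \eqref{eq:pos-def-eddy} applied at the $H_{\rho,k-1}$ level, you use the equivalent \eqref{eq:assume} applied to $\partial_0^{-1}E$) into the $H_{\rho,-1}$--$H_{\rho,1}$ duality bound of $\langle E|f\rangle_{\rho,0,0}$. The only differences are cosmetic — you fix $k=1$ and shift by powers of $\partial_0$ where the paper carries a general $k$, and you are somewhat more explicit than the paper about justifying the energy identity for weak solutions.
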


\begin{proof} Let $f\in H_{\rho,k+1}(\mathbb{R},H_{0})$, $\varepsilon>0$.
Then $\mathrm{E}=S_{\varepsilon}f$ satisfies
\[
\partial_{0}\epsilon\mathrm{E}+\sigma\mathrm{E}+\curl\mu^{-1}\interior\curl\partial_{0}^{-1}\mathrm{E}=f.
\]

We shall now separate this equation into the parts in $H_{0}$ and
$H_{0}^{\perp}$ separately. 

Denoting $\left(\begin{array}{c}
\mathrm{E}_{0}\\
\mathrm{E}_{1}
\end{array}\right)=\left(\begin{array}{c}
\iota_{H_{0}}^{*}\mathrm{E}\\
\iota_{H_{0}^{\perp}}^{*}\mathrm{E}
\end{array}\right),$ we obtain 
\begin{align*}
\partial_{0}\varepsilon E_{0}+\sigma E_{0}+\curl\mu^{-1}\interior\curl\partial_{0}^{-1}E_{0} & =\iota_{H_{0}}^{\ast}f,\\
\partial_{0}\varepsilon E_{1} & =0,
\end{align*}
where we have used that $f\in H_{0}.$ By the second equation we have
\[
\partial_{0}\varepsilon\mathrm{E}_{1}=0
\]
and thus, continuous invertibility of $\partial_{0}$ implies $\mathrm{E}_{1}=0$.
Testing the equation for $\mathrm{E}_{0}$ with $\mathrm{E}_{0}$,
we deduce
\[
\rho\left|\epsilon^{1/2}\mathrm{E}_{0}\right|_{\rho,k,0}^{2}+\left|\sigma^{1/2}\mathrm{E}_{0}\right|_{\rho,k,0}^{2}+\left\langle \interior\curl\mathrm{E}_{0}|\mu^{-1}\partial_{0}^{-1}\interior\curl\mathrm{E}_{0}\right\rangle _{\rho,k,0}=\left\langle \mathrm{E}_{0}|f\right\rangle _{\rho,k,0}\leq\left|\mathrm{E}_{0}\right|_{\rho,k-1,0}\left|f\right|_{\rho,k+1,0}.
\]
Using
\[
\left|\sigma^{1/2}\mathrm{E}_{0}\right|_{\rho,k-1,0}\leq\frac{1}{\rho}\left|\sigma^{1/2}\mathrm{E}_{0}\right|_{\rho,k,0},
\]
and 
\begin{align*}
\left\langle \interior\curl\mathrm{E}_{0}|\mu^{-1}\partial_{0}^{-1}\interior\curl\mathrm{E}_{0}\right\rangle _{\rho,k,0} & =\left\langle \partial_{0}\partial_{0}^{-1}\interior\curl\mathrm{E}_{0}|\mu^{-1}\partial_{0}^{-1}\interior\curl\mathrm{E}_{0}\right\rangle _{\rho,k,0}\\
 & =\rho\left|\partial_{0}^{-1}\mu^{-1/2}\interior\curl\mathrm{E}_{0}\right|_{\rho,k,0}^{2}\\
 & =\rho\left|\mu^{-1/2}\interior\curl\mathrm{E}_{0}\right|_{\rho,k-1,0}^{2}
\end{align*}
we infer
\[
\rho^{2}\left|\sigma^{1/2}\mathrm{E}_{0}\right|_{\rho,k-1,0}^{2}+\rho\left|\mu^{-1/2}\interior\curl\mathrm{E}_{0}\right|_{\rho,k-1,0}^{2}\leq|E_{0}|_{\rho,k-1,0}|f|_{\rho,k+1,0}.
\]
On the other hand we know by \eqref{eq:pos-def-eddy} that
\[
\left|\sigma^{1/2}\mathrm{E}_{0}\right|_{\rho,k-1,0}^{2}+\left|\mu^{-1/2}\interior\curl\mathrm{E}_{0}\right|_{\rho,k-1,0}^{2}\geq c_{0}\left|\mathrm{E}_{0}\right|_{\rho,k-1,0}^{2}
\]
 for some $c_{0}\in\oi0\infty$. Thus, as $\rho\geq1$ we have
\begin{align*}
c_{0}\left|\mathrm{E}_{0}\right|_{\rho,k-1,0}^{2} & \leq\left|\mathrm{E}_{0}\right|_{\rho,k-1,0}\left|f\right|_{\rho,k+1,0}.
\end{align*}
Consequently, we have the uniform estimate
\[
c_{0}\left|\mathrm{E}_{0}\right|_{\rho,k-1,0}\leq\left|f\right|_{\rho,k+1,0},
\]
which yields 
\[
\sup_{\varepsilon>0}\|S_{\varepsilon}\|_{H_{\rho,k}(\mathbb{R},H_{0})\to H_{\rho,k-2}(\mathbb{R},H_{0})}=\sup_{\varepsilon>0}\|S_{\varepsilon}\|_{H_{\rho,k+1}(\mathbb{R},H_{0})\to H_{\rho,k-1}(\mathbb{R},H_{0})}\leq\frac{1}{c_{0}}.\tag*{\qedhere}
\]
\end{proof}

\begin{proof}[Proof of Theorem \ref{thm:mrjust}]For $\varepsilon>0$
and $f\in H_{\rho,k+1}(\mathbb{R},H_{0})$ we find
\begin{eqnarray*}
S_{\epsilon}f-S_{0}f & = & S_{\epsilon}\left(S_{0}^{-1}-S_{\epsilon}^{-1}\right)S_{0}f\\
 & = & S_{\epsilon}\epsilon\partial_{0}S_{0}f\\
 & = & S_{\epsilon}\epsilon\partial_{0}S_{0}f
\end{eqnarray*}
and so
\begin{eqnarray}\label{eq:qe}
\left|S_{\epsilon}f-S_{0}f\right|_{\rho,k-2,0} & = & \left|S_{\epsilon}\epsilon\partial_{0}S_{0}f\right|_{\rho,k-2,0}\\ \notag
 & \leq & \|S_{\varepsilon}\|_{H_{\rho,k}(\mathbb{R},H_{0})\to H_{\rho,k-2}(\mathbb{R},H_{0})}\left|\epsilon\partial_{0}S_{0}f\right|_{\rho,k,0}\\ \notag
 & \leq & \epsilon\|S_{\varepsilon}\|_{H_{\rho,k}(\mathbb{R},H_{0})\to H_{\rho,k-2}(\mathbb{R},H_{0})}\left|S_{0}\partial_{0}f\right|_{\rho,k,0}\\ \notag
 & \leq & \epsilon\|S_{\varepsilon}\|_{H_{\rho,k}(\mathbb{R},H_{0})\to H_{\rho,k-2}(\mathbb{R},H_{0})}\|S_{0}\|_{H_{\rho,k}(\mathbb{R},H_{0})\to H_{\rho,k}(\mathbb{R},H_{0})}\left|\partial_{0}f\right|_{\rho,k,0}\\ \notag
 & \leq & \epsilon\|S_{\varepsilon}\|_{H_{\rho,k}(\mathbb{R},H_{0})\to H_{\rho,k-2}(\mathbb{R},H_{0})}\|S_{0}\|_{H_{\rho,k-1}(\mathbb{R},H_{0})\to H_{\rho,k-1}(\mathbb{R},H_{0})}\left|f\right|_{\rho,k+1,0}.
\end{eqnarray}
By Lemma \ref{lem:mrjust}, we deduce that 
\[
\left|S_{\epsilon}f-S_{0}f\right|_{\rho,k-2,0}\overset{\varepsilon\to0}{\to}0
\]
for every $f\in H_{\rho,k+1}\left(\mathbb{R},H_{0}\right)$. By density
of $H_{\rho,k+1}\left(\mathbb{R},H_{0}\right)$ in $H_{\rho,k}\left(\mathbb{R},H_{0}\right)$
and uniform boundedness of $\left(S_{\epsilon}\right)_{\epsilon\geq0}$
it follows that
\[
\left|S_{\epsilon}f-S_{0}f\right|_{\rho,k-2,0}\overset{\varepsilon\to0}{\to}0
\]
for all $f\in H_{\rho,k}\left(\mathbb{R},H_{0}\right)$, which is
the desired convergence result. \end{proof}

\red{\begin{rem}\label{rem:Conv} The justification of the eddy-current model as the low electric permittivity limit of the classical Maxwell system is performed in \cite[Theorem 2.5]{RV2010} with a focus on the frequency domain for fixed frequency. The quantitative estimate is of the order $O(\varepsilon)$ as $\varepsilon\to 0$. The estimates and derivations described in the proof above provide the same quantitative nature for fixed frequency (see the estimates in \eqref{eq:qe}). Since the above result covers the full time line (and thus all frequencies) \emph{simultaneously} some time regularity loss has to be expected if one wants to keep the order of $\varepsilon$ result. Indeed, also in the \cite[proof of Theorem 2.5]{RV2010} the frequency dependence of the quantitative estimate suggests a (time) regularity loss if one wants to keep the derived quantitative estimate for the full space-time problem (note the $\omega^2$ in the \cite[proof of Theorem 2.5]{RV2010}). Furthermore, this effect has been observed in the context of Maxwell's equations in \cite{MMA:MMA4515,Wau16}. A similar observation can be made for approximations in quantitative homogenisation theory: Whereas for fixed frequencies one obtains optimal quantitative estimates \cite{CW17_1D,CW17_FH}, the estimates for the full space-time problem experience a loss of derivatives if one wants to retain the same quantitative behaviour, see \cite{W18_ONC_PAMM,FW17_1D}. It is possible to accommodate for this regularity loss with an analogue of Littlewood-Paley type spaces, see \cite{CW17_1D}.
\end{rem}}

\bibliographystyle{abbrv}

\restoregeometry
\end{document}